\pdfoutput=1
\documentclass[11pt]{amsart}
\usepackage[T1]{fontenc}
\usepackage{lmodern}
\usepackage[letterpaper,margin=1in]{geometry}
\usepackage{amsmath,amssymb,mathtools,microtype}
\usepackage{enumitem,tikz-cd,needspace,booktabs,array,tabularx}
\usetikzlibrary{arrows.meta,fit,backgrounds,shapes.geometric}
\usepackage[hidelinks]{hyperref}
\newtheorem{theorem}{Theorem}[section]
\newtheorem{mainthm}[theorem]{Theorem}
\newtheorem{mainconj}{Conjecture}

\newtheorem{proposition}[theorem]{Proposition}
\newtheorem{lemma}[theorem]{Lemma}
\newtheorem{corollary}[theorem]{Corollary}
\theoremstyle{definition}
\newtheorem{definition}[theorem]{Definition}
\newtheorem{example}[theorem]{Example}
\theoremstyle{remark}
\newtheorem{remark}[theorem]{Remark}
\numberwithin{equation}{section}
\newcommand{\Z}{\mathbb Z}
\newcommand{\F}{\mathbb F}
\newcommand{\C}{\mathbb C}
\newcommand{\R}{\mathbb R}
\newcommand{\one}{\mathbf 1}
\newcommand{\id}{\mathrm{id}}
\newcommand{\Ext}{\operatorname{Ext}}
\newcommand{\Hom}{\operatorname{Hom}}
\newcommand{\End}{\operatorname{End}}

\newcommand{\Thick}{\operatorname{Thick}}
\newcommand{\cofib}{\operatorname{cofib}}

\newcommand{\Mod}{\operatorname{Mod}}
\newcommand{\SH}{\mathrm{SH}}
\newcommand{\cell}{\mathrm{cell}}

\newcommand{\D}{\mathcal D}
\tikzset{cell/.style={circle,draw,fill=white,inner sep=1.8pt},
  labeltext/.style={font=\scriptsize,fill=white,inner sep=1.5pt},
  block/.style={draw,rounded corners=3pt,inner sep=9pt},
  map/.style={-{Stealth[length=2mm]},semithick}}
\tikzset{
  motivic cell/.style={draw,ellipse,fill=white,inner xsep=2pt,
    inner ysep=1.5pt,font=\scriptsize,minimum width=11mm,minimum height=6mm},
  cell label/.style={font=\scriptsize,inner sep=1.5pt,outer sep=1pt},
  cell map/.style={-{Stealth[length=1.8mm,width=1.3mm]},semithick}
}

\usepackage{capt-of,placeins}
\title[Generating hypotheses along the motivic deformation]
{Generating hypotheses along the motivic deformation}
\author{Sihao Ma}
\address{UCLA Department of Mathematics, 520 Portola Plaza, Los Angeles, CA 90095-1555}
\email{masihao@math.ucla.edu}
\author{Zhouli Xu}
\address{UCLA Department of Mathematics, 520 Portola Plaza, Los Angeles, CA 90095-1555}
\email{xuzhouli@ucla.edu}
\date{}
\subjclass[2020]{55P42, 55N22, 55Q10, 14F42}
\keywords{Generating hypothesis, motivic stable homotopy, $BP_*BP$-comodules, formal groups}
\hypersetup{pdftitle={Generating hypotheses along the motivic deformation},
pdfauthor={Sihao Ma and Zhouli Xu},
pdfkeywords={Generating hypothesis, motivic spectra, BP comodules, formal groups}}
\begin{document}
\begin{abstract}
At every prime $p$, we disprove the algebraic generating hypothesis for compact objects in Hovey's stable category of $BP_*BP$-comodules and its local analog at every positive height, answering questions of Barthel and Heard. In both settings, for every $r,n\geq1$, we construct a ghost whose first $n$ composition powers are all nonzero and have exact additive order $p^r$. The same construction applies to the stable category of $E_*E$-comodules for even $p$-local Landweber exact theories $E$ that are not rational.

Through the motivic deformation of Gheorghe--Wang--Xu, these examples give $C\tau$-linear counterexamples to the cellular $\C$-motivic generating hypothesis, with the same order and composition properties. We also construct nonzero non-$C\tau$-linear ghosts between $C\tau$-modules and a nonzero ghost on a motivic spectrum with noncontractible Betti realization.
\end{abstract}
\maketitle
\enlargethispage{4pt}
\raggedbottom
\setcounter{tocdepth}{1}
\tableofcontents
\section{Introduction}\label{sec:intro}

\subsection{Generating hypotheses and the motivic deformation}\label{subsec:introduction}

The generating hypothesis of Freyd~\cite{Freyd} states that a map $f:X\to Y$ between finite spectra is null-homotopic if $\pi_n(f)=0$ for every integer $n$. Equivalently, stable homotopy groups define a faithful functor on the finite stable homotopy category. This conjecture remains open.

Its significance extends beyond detecting zero maps. Freyd~\cite[Proposition~9.7]{Freyd} proved that faithfulness would imply fullness: writing $S$ for the classical sphere, the natural map
\[
 [X,Y]\longrightarrow\Hom_{\pi_*S}(\pi_*X,\pi_*Y)
\]
would be an isomorphism for every pair of finite spectra. In particular, the graded $\pi_*S$-module $\pi_*X$ would determine the stable homotopy type of $X$. In the $p$-complete formulation, the hypothesis would also make $\pi_*(S_p^{\wedge})$ self-injective in graded modules and totally incoherent~\cite{Freyd,Hovey,SS}.

Analogous questions have been settled in several algebraic settings. Lockridge~\cite[Corollary~3.8]{Lockridge} proved that, for a commutative ring $R$, homology is faithful on perfect complexes over $R$ if and only if $R$ is von Neumann regular. For a nontrivial finite $p$-group over a field of characteristic $p$, Benson, Chebolu, Christensen, and Min\'a\v{c}~\cite[Theorem~1.1]{BCCM} proved that Tate cohomology is faithful on the stable category of finite-dimensional modules exactly for the cyclic groups of order $2$ and $3$. In rational equivariant stable homotopy theory, the Greenlees--May description~\cite[Theorem~A.4]{GM} gives faithfulness for finite groups when all orbit spheres are used as detectors, whereas Bohmann~\cite[Theorem~1.7]{Bohmann} proved failure for the circle group.

\medskip
\noindent\textbf{The Question of Barthel and Heard.}
Fix a prime $p$. The $E_2$-page of the Adams--Novikov spectral sequence
\[
 E_2^{s,t}=\Ext^{s,t}_{BP_*BP}(BP_*,BP_*)
\]
gives an approximation of the $p$-primary stable homotopy groups of spheres. Its entries are derived maps between shifts of the comodule unit $BP_*$.

Quillen's theorem~\cite[Theorem~2]{Quillen} gives this approximation to stable homotopy a geometric interpretation: even graded $BP_*BP$-comodules describe quasi-coherent sheaves on the moduli stack of formal groups over $\Z_{(p)}$, and the detecting Ext groups are sheaf cohomology with twists by the line bundle of invariant differentials \cite[Corollary~2.45, Remark~3.14, and equation~(3.5)]{Goerss}. Barthel and Heard~\cite[Remark~5.15]{BH} ask for algebraic analogs of the generating hypothesis, both globally and locally at a fixed chromatic height.

Let $\D(BP_*BP)$ denote the derived category of graded $BP_*BP$-comodules, and put
\[
 \D_{\mathrm{fin}}(BP_*BP) =\Thick\{(\Sigma^{2b}BP_*)[a]:a,b\in\Z\} \subset\D(BP_*BP).
\]
Here $\Thick$ denotes thick closure, $\Sigma^t$ shifts the internal degree by $t$, and $[r]$ shifts the complex degree. We use the cochain convention specified in Section~\ref{subsec:categories}. Under the stable-to-derived comparison, this is the compact part of the even graded subcategory of Hovey's stable category of comodules $\operatorname{Stable}(BP_*BP)$ \cite[Section~6]{Hovey-comodules}; see Section~\ref{subsec:categories} for the comparison. For $h\geq1$, write $L_{K(h)}$ for algebraic $K(h)$-localization \cite[Definition~6.13]{BH}. The local category is the even graded part of $L_{K(h)}\operatorname{Stable}(BP_*BP)$, with unit
\[
 L_{K(h)}BP_*\simeq \operatorname*{holim}_{s\geq1}v_h^{-1}(BP_*/I_h^s), \qquad I_h=(p,v_1,\ldots,v_{h-1}).
\]
Here the homotopy limit is taken in $\operatorname{Stable}(BP_*BP)$. Section~\ref{subsec:local-algebraic} explains this formula and its Johnson--Wilson presentation. In either category, a map $g$ is a \emph{ghost} if
\begin{equation}\label{eq:ghost-definition}
 \Hom(\Sigma^{2b}\one,g[s])=0\qquad(s,b\in\Z),
\end{equation}
where $\one=BP_*$ or $L_{K(h)}BP_*$, respectively. The following statements formulate affirmative answers to the two questions~\cite[Remark~5.15]{BH}.

\begin{mainconj}[Algebraic Generating Hypothesis]\label{conj:algebraic-gh}
\leavevmode
\begin{enumerate}[label=\textup{(\arabic*)},leftmargin=2em]
\item Every ghost between objects of $\D_{\mathrm{fin}}(BP_*BP)$ is zero.
\item For every $h\geq1$, every ghost between compact objects of $L_{K(h)}\operatorname{Stable}(BP_*BP)$ is zero.
\end{enumerate}
\end{mainconj}

Barthel and Heard expect the local statement to fail, in analogy with the topological $K(h)$-local result of Barthel~\cite[Corollary~3.9]{Barthel-local}. We disprove both statements at every prime by the same formal-group operation.

Let $E$ be an even, $p$-local Landweber exact theory with a $p$-typical orientation $BP_*\to E_*$. Its ordinary cooperations form the flat Hopf algebroid \cite[Introduction and Corollary~2.3]{HS}
\[
 E_*E\cong E_*\otimes_{BP_*}BP_*BP\otimes_{BP_*}E_*.
\]
For $r\geq1$, multiplication by an integer $u\equiv1\pmod{p^r}$ on the formal group of $E$ over $E_*/p^r$ is a strict automorphism. It commutes with coordinate isomorphisms and hence induces a natural automorphism $\Theta_u$ of each graded $E_*E/p^r$-comodule $L$; see Section~\ref{subsec:strict-operation} for the construction.

\begin{proposition}[Ghosts from formal multiplication]\label{prop:intro-criterion}
For every graded $E_*E/p^r$-comodule $L$ and every integer $u\equiv1\pmod{p^r}$, the operation $\Theta_u$ induces the identity on $\Ext^{s,t}_{E_*E}(E_*,L)$ for all $s\geq0$ and $t\in\Z$. Consequently, $1-\Theta_u:L\to L$ is a ghost.
\end{proposition}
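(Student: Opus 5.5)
Our plan is to show that $\Theta_u$ is a natural transformation of the identity functor on $E_*E/p^r$-comodules that is \emph{compatible with the cobar complex}, and that on the unit-related part it acts trivially up to a chain homotopy. We use the standard description
\[
\Ext^{s,t}_{E_*E}(E_*,L)\cong\Ext^{s,t}_{E_*E/p^r}(E_*/p^r,L),
\]
valid because $L$ is $p^r$-torsion and $E_*E$ is flat over $E_*$. By this change of rings we may work entirely over the Hopf algebroid $(E_*/p^r,E_*E/p^r)$.

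\textbf{Step 1: naturality.} By construction (Section~\ref{subsec:strict-operation}), $\Theta_u$ is natural in $L$ with respect to comodule maps, additive, and compatible with tensor products of comodules. This should be the case because it comes from a strict automorphism of the formal group that commutes with all coordinate changes, i.e.\ a central automorphism of the groupoid-valued functor represented by $(E_*/p^r,E_*E/p^r)$. On the unit comodule $E_*/p^r$, $\Theta_u$ is the identity, since it acts on a comodule through its coaction and the unit has trivial structure.

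\textbf{Step 2: Ext as a derived functor of $\Hom(E_*,-)$.} Compute $\Ext^s(E_*/p^r,L)$ via a resolution of $L$ by relatively injective comodules, namely the cobar resolution $L\to \Gamma\otimes L\to\Gamma\otimes\Gamma\otimes L\to\cdots$ with $\Gamma=E_*E/p^r$. An element of $\Ext^s$ is then a class of comodule maps $E_*\to C^s$. Naturality of $\Theta_u$ gives an endomorphism of this resolution lifting $\Theta_u$ on $L$. The induced map on $\Ext$ sends a cycle $x:E_*\to C^s$ to $\Theta_u\circ x$, and naturality gives $\Theta_u\circ x=x\circ\Theta_u|_{E_*}=x$. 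Hence the map on $\Ext$ is the identity. Put more conceptually, $\Ext^s(E_*,L)=\Hom_{\D}(E_*,L[s])$, and $\Theta_u$ extends to a natural endomorphism of the identity functor on the derived category that is trivial on the unit. Precomposition and postcomposition therefore agree, giving $\Theta_u\circ x=x\circ\Theta_u=x$.

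\textbf{Step 3: the twists.} The same argument applies to $\Sigma^{2b}E_*$ after extending $\Theta_u$ to shifts of the unit. The main obstacle is here. On $\Sigma^{t}E_*$ the operation may act nontrivially, for instance by $u^{t/2}$, the action on invariant differentials. We will check that $u^{t/2}\equiv1$ modulo $p^r$: since $u\equiv 1\pmod{p^r}$, every power of $u$ is $\equiv1$. So after reducing modulo $p^r$, $\Theta_u$ acts as the identity on all $\Sigma^{2b}E_*/p^r$, and hence on all detecting objects.

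\textbf{Conclusion.} Since $\Theta_u$ induces the identity on $\Ext^{s,t}$, the map $1-\Theta_u$ induces zero on all of these groups. Via the stable-to-derived comparison, these groups are exactly the $\Hom(\Sigma^{2b}\one,-[s])$ appearing in \eqref{eq:ghost-definition}. Therefore $1-\Theta_u$ is a ghost.
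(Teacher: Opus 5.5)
Your proposal is correct and follows essentially the paper's argument. Both proofs change rings to $(E_*/p^r,E_*E/p^r)$, use the naturality of $\Theta_u$ from Lemma~\ref{lem:principal-unit-automorphism} passed to $\D(E_*E/p^r)$, and observe that $\Theta_u\circ g=g\circ\Theta_u=g$ for every $g:\Sigma^t(E_*/p^r)\to L[s]$; your cobar-resolution version is a chain-level form of the same naturality. Step~3 is unnecessary: $\Theta_u$ is literally the identity on every $\Sigma^t(E_*/p^r)$, including odd $t$, because the generator is primitive and $\theta_u\eta_L=\id$, so no $u^{t/2}$ argument is needed (also, in Step~2 the maps should have source $E_*/p^r$, not $E_*$).
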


The proposition applies to $BP$, Johnson--Wilson theories $E(n)$, and Morava theories $E_n$ for $n\geq1$. To construct a nonzero ghost, it suffices to find a comodule on which $\Theta_u$ is not the identity. Projective-space coefficients give the following families of finite examples.

\Needspace{6\baselineskip}
\begin{mainthm}[Global and local algebraic ghosts]\label{thm:algebraic-intro}
At every prime $p$, the following statements hold.
\begin{enumerate}[label=\textup{(\arabic*)},leftmargin=2em]
\item For every $r,n\geq1$, there are a comodule $M\in\D_{\mathrm{fin}}(BP_*BP)$ and a ghost $f:M\to M$ with
\[
 p^r\cdot\id_M=0,\qquad p^{r-1}\cdot f^j\ne0\quad(1\leq j\leq n).
\]
\item For every $h,r,n\geq1$, there are a compact object $M\in L_{K(h)}\operatorname{Stable}(BP_*BP)$ and a ghost $f:M\to M$ with
\[
 p^r\cdot\id_M=0,\qquad p^{r-1}\cdot f^j\ne0\quad(1\leq j\leq n).
\]
\end{enumerate}
In both families, each of $f,f^2,\ldots,f^n$ has exact additive order $p^r$. Thus both parts of Conjecture~\ref{conj:algebraic-gh} are false, and the ghost ideal in each category is not nilpotent.
\end{mainthm}

In the special case $r=n=1$ of \textup{(1)}, we may take
\[
 M=(BP_*/p)\{e_0,e_1\},\qquad |e_0|=0,\quad |e_1|=2(p-1),
\]
with coaction $\psi_M(e_0)=1\otimes e_0$ and $\psi_M(e_1)=1\otimes e_1+t_1\otimes e_0$. We use the Araki generators $v_i$ of $BP_*$ and the standard generators $t_i$ of $BP_*BP$, with the conventions recalled in Section~\ref{subsec:strict-operation}. The comodule $M$ is constructed from four shifts of $BP_*$ and hence belongs to $\D_{\mathrm{fin}}(BP_*BP)$. The ghost $f=1-\Theta_{1+p}$ is given by
\[
 f(e_0)=0,\qquad f(e_1)=v_1e_0.
\]
Theorem~\ref{thm:main} proves that $f$ is nonzero, has exact additive order $p$, and satisfies $f^2=0$.

The analog of Conjecture~\ref{conj:algebraic-gh}(1) for $E_*E$-comodules also fails for every nonrational $E$ as above. Applying Proposition~\ref{prop:intro-criterion} to the same projective-space coefficients gives, for every $r,n\geq1$, a ghost whose first $n$ powers have exact additive order $p^r$ in
\[
 \D_{\mathrm{fin}}(E_*E) =\Thick\{(\Sigma^{2b}E_*)[a]:a,b\in\Z\}.
\]
The general cases are given in Sections~\ref{subsec:projective-spaces} and~\ref{subsec:local-algebraic}.

\medskip
\noindent\textbf{The Motivic Deformation.}
For a field $k$, let $\SH(k)_{\cell}$ be the cellular motivic stable homotopy category, with sphere $\one_k$, and write
\[
 \pi_{a,b}X=[\Sigma^{a,b}\one_k,X].
\]
A finite cellular spectrum lies in the thick subcategory generated by the bigraded spheres; these are the compact objects of $\SH(k)_{\cell}$~\cite[Theorem~9.1 and Proposition~9.4]{DI}. A motivic ghost is a map inducing zero on all these groups. Write $\SH(k)_{\cell,p}^{\wedge}$ for the $p$-complete category, whose unit is $\widehat\one_k=L_{\one_k/p}\one_k$; here $L_{\one_k/p}$ denotes Bousfield localization at $\one_k/p$.

Over $\C$, the comparison of Gheorghe--Wang--Xu~\cite{GWX} places this $BP_*BP$-comodule construction in the special fiber of the motivic deformation of classical stable homotopy theory. Form the cofiber
\[
 C\tau=\cofib(\tau:\Sigma^{0,-1}\widehat\one_{\C} \to\widehat\one_{\C}).
\]
The object $C\tau$ is a commutative algebra~\cite{Gheorghe}. Inverting $\tau$ gives the classical $p$-complete generic fiber~\cite{DIAdams}, while extension of scalars to $C\tau$ gives the special fiber \cite[Section~6, especially Proposition~6.2]{GWX}:
\[
\begin{tikzcd}[column sep=6em]
 \SH_p^{\wedge}
 & \SH(\C)_{\cell,p}^{\wedge}
   \arrow[l,"\tau^{-1}","\textup{generic fiber}"']
   \arrow[r,"\textup{special fiber}","C\tau\wedge_{\widehat\one_{\C}}(-)"']
 & \Mod_{C\tau}\bigl(\SH(\C)_{\cell,p}^{\wedge}\bigr)
   \simeq \operatorname{Stable}(BP_*BP).
\end{tikzcd}
\]
Here $\SH_p^{\wedge}$ is the classical $p$-complete stable homotopy category. In the special-fiber comparison, $\operatorname{Stable}(BP_*BP)$ denotes the $p$-complete even category of comodules over the degreewise $p$-completed Hopf algebroid $BP_*BP$~\cite[Corollary~1.2]{GWX}. The algebraic comparison identifies the bounded derived category of even, degreewise $p$-completed $BP_*BP$-comodules with a full subcategory of $C\tau$-modules, sending the completed comodule unit to $C\tau$~\cite[Theorem~1.1]{GWX}.

\begin{mainconj}[Cellular Motivic Generating Hypothesis]\label{conj:motivic-gh}
Let $k$ be a field. If a map $f:X\to Y$ between finite cellular spectra in $\SH(k)_{\cell}$ satisfies $\pi_{a,b}(f)=0$ for every $(a,b)\in\Z^2$, then $f$ is null-homotopic.
\end{mainconj}

The detectors are the bigraded spheres over $\operatorname{Spec}k$; the condition does not require vanishing on homotopy sheaves evaluated on every smooth $k$-scheme.

The Chow $t$-structure of Bachmann--Kong--Wang--Xu connects $BP_*BP$-comodules to motivic homotopy theory over a general field $k$. For a prime $p$ invertible in $k$, its $p$-local cellular heart in $\SH(k)$ is equivalent to the category of even graded $BP_*BP$-comodules \cite[Theorem~1.9(1) and Section~5.1]{BKWX}. At the level of stable categories, they identify cellular modules over the $p$-local Chow-degree-zero sphere with the even graded part of $\operatorname{Stable}(BP_*BP)$ \cite[Theorem~1.9(2) and Section~5.1]{BKWX}. This equivalence transports the algebraic ghosts to this motivic module category over $k$; see Section~\ref{subsec:fields}.

\Needspace{12\baselineskip}
\subsection{Three types of motivic counterexamples}\label{subsec:counterexamples}

Over $\C$, the motivic deformation exhibits three distinct types of failure of the generating hypothesis. The algebraic ghosts in Theorem~\ref{thm:algebraic-intro}(1) give $C\tau$-linear ghosts in the special fiber. The ambient motivic category contains two further types: non-$C\tau$-linear ghosts between $C\tau$-modules, and ghosts on spectra that admit no $C\tau$-module structure and have noncontractible Betti realization.

\begin{mainthm}[Intrinsic and ambient motivic ghosts]\label{thm:failure}
Conjecture~\ref{conj:motivic-gh} fails in the uncompleted category $\SH(\C)_{\cell}$ in the following three ways.
\begin{enumerate}[label=\textup{(\arabic*)},leftmargin=2em]
\item \textup{($C\tau$-linear ghosts.)} At every prime $p$ and for every $r,n\geq1$, there are a finite cellular $C\tau$-module $X$, whose underlying motivic spectrum is also finite cellular over the uncompleted sphere, and a $C\tau$-linear ghost $\alpha:X\to X$ with
\[
 p^r\cdot\id_X=0,\qquad p^{r-1}\cdot \alpha^j\ne0\quad(1\leq j\leq n).
\]
Thus the first $n$ composition powers of $\alpha$ all have exact additive order $p^r$. When $r=n=1$, $X$ may be chosen with eight cells over the uncompleted sphere.
\item \textup{(Non-$C\tau$-linear ghosts between $C\tau$-modules.)} At every prime $p$ and for every $r\geq1$, there is a $C\tau$-module $Y$ whose underlying motivic spectrum has four cells over the uncompleted sphere and which supports a ghost $\beta:Y\to\Sigma^{1,-1}Y$ of exact additive order $p^r$. For the natural module structures,
\[
 [Y,\Sigma^{1,-1}Y]_{C\tau}=0.
\]
In particular, $\beta$ is not $C\tau$-linear.
\item \textup{(Ghosts on spectra admitting no $C\tau$-module structure.)} At every prime $p$, the four-cell spectrum $Z=\one_{\C}/(p^4,p\tau)$ over the uncompleted sphere supports a nonzero ghost $\gamma:Z\to\Sigma^{1,-1}Z$ with
\[
 \operatorname{Re}(Z)\simeq(S/p^4)\wedge(S/p)\not\simeq *.
\]
Its source and target thus have noncontractible Betti realization, while $\operatorname{Re}(\gamma)=0$. The spectrum $Z$ admits no $C\tau$-module structure.
\end{enumerate}
\end{mainthm}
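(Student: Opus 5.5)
Part (1) follows from Theorem~\ref{thm:algebraic-intro}(1) transported along the Gheorghe--Wang--Xu comparison; parts (2) and (3) need small explicit cell complexes built from $p$ and $\tau$, with a direct bigraded homotopy computation in each case.

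\textbf{Part (1).} Take the comodule $M$ and ghost $f$ of Theorem~\ref{thm:algebraic-intro}(1). $M$ is $p^r$-torsion and built from finitely many shifts of $BP_*$, so it equals its degreewise $p$-completion and lies in the bounded derived category of even completed comodules. By \cite[Theorem~1.1]{GWX} it therefore corresponds to a finite cellular $C\tau$-module $X$, and $f$ corresponds to a $C\tau$-linear $\alpha$.
\begin{itemize}
\item The fully faithful embedding sends $\Sigma^{2b}BP_*[s]$ to bigraded shifts of $C\tau$, so maps $\Sigma^{a,b}C\tau\to X$ of $C\tau$-modules correspond to $\Ext$ groups.
\item By adjunction, $\pi_{a,b}X=[\Sigma^{a,b}C\tau,X]_{C\tau}$. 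Hence the ghost property of $f$ in \eqref{eq:ghost-definition} gives $\pi_{*,*}(\alpha)=0$.
\item Faithfulness preserves the identities $p^r\id_X=0$ and $p^{r-1}\alpha^j\neq0$.
\end{itemize}
To see that $X$ is finite over the uncompleted sphere, note that $C\tau/p^r=\one_{\C}/(\tau,p^r)$ is a four-cell complex, and $p$-completion does nothing on $p$-torsion objects. For $r=n=1$, $M$ has two $BP_*/p$ cells, giving $2\times4=8$ cells.

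\textbf{Part (2).} Take $Y=C\tau/p^r=\one_{\C}/(p^r,\tau)$. In $C\tau$-modules, $[Y,\Sigma^{1,-1}Y]_{C\tau}$ corresponds to $\Ext^{s,t}$ between copies of $BP_*/p^r$. The bidegree $(1,-1)$ shift lands at odd internal degree relative to the even comodule grading, so this group vanishes.

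In $\SH(\C)$, however, $C\tau\wedge C\tau\simeq C\tau\vee\Sigma^{1,-1}C\tau$. This yields a nontrivial map $C\tau\to\Sigma^{1,-1}C\tau$, namely the composite $C\tau\to\Sigma^{1,-1}\one\to\Sigma^{1,-1}C\tau$ of the connecting map and the unit. Smash it with $\one/p^r$ to get $\beta$.
\begin{itemize}
\item \emph{Ghost.} $\beta$ factors through $\Sigma^{1,-1}\one/p^r$. Precomposing with a sphere gives the connecting map on $\pi_{*,*}$, and $\pi_{*,*}(\Sigma^{1,-1}\one/p^r)\to\pi_{*,*}(\Sigma^{1,-1}C\tau/p^r)$ kills the image of that connecting map, by exactness of the $\tau$-Bockstein sequence.
\item \emph{Exact order $p^r$.} This amounts to showing that $p^{r-1}\beta$ survives. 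I would reduce it, via the $\tau$-cofiber sequence, to the non-vanishing of a degree-$0$ class in $[\one/p^r,\one/p^r]$. I expect this to be the main computational check.
\end{itemize}

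\textbf{Part (3).} Take $Z=\one_{\C}/(p^4,p\tau)$. Betti realization sends $\tau\mapsto1$, so $\operatorname{Re}Z\simeq S/p^4\wedge S/p$, which is nonzero.

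Construct $\gamma$ as a composite $Z\to\Sigma^{1,-1}\one/p^4\to\Sigma^{1,-1}Z$, using the top-cell projection for $p\tau$ and the inclusion. I would show:
\begin{itemize}
\item \emph{Ghost.} Analyze $\pi_{*,*}$ via the long exact sequences for $p\tau$. The image of the connecting map is annihilated because $p\tau$ acts on the relevant $\pi_{*,*}(\one/p^4)$ in a controlled way over $\C$, using $\tau$-torsion-freeness in low stems.
\item \emph{Nonzero.} Detect $\gamma$ in motivic cohomology, or after smashing with $C\tau$, where it becomes a nonzero algebraic map.
\item $\operatorname{Re}\gamma=0$ because $\operatorname{Re}$ of the top-cell projection composite is the null Bockstein pattern after $\tau\mapsto1$.
\item \emph{No $C\tau$-module structure.} A $C\tau$-module is annihilated by $\tau$ on $\pi_{*,*}$ in a suitable sense, so its Betti realization is contractible. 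Since $\operatorname{Re}Z\not\simeq*$, $Z$ admits no such structure.
\end{itemize}

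The main obstacle is the ghost-plus-nonvanishing verification in part (3). The $p^4$ versus $p$ bookkeeping must make the composite vanish on homotopy while staying nonzero. I would first try computing $\pi_{*,*}$ through the relevant range using the known structure $\pi_{0,*}\one_{\C}=\Z_p[\tau]$ plus $\eta$-torsion.
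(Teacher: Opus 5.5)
\textbf{Part (2): the ghost step fails.} Exactness of the $\tau$-cofiber sequence gives $q\circ i=0$ and $(\Sigma\tau)\circ q=0$. But $(\Sigma^{1,-1}i)\circ q$ is not a composite of consecutive maps, so nothing formal kills it on homotopy. On $\pi_{*,*}$, $q_*$ has image the $\tau$-torsion of $\pi_{*,*}(\one_{\C}/p^r)$ (suitably shifted), and $i_*$ kills exactly the $\tau$-multiples. So $\beta_{p^r}$ is a ghost if and only if every $\tau$-torsion class is $\tau$-divisible, which needs real input.

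At $p=2$ this fails. We have $\tau\eta^4=0$, and $\eta^4$ is not $\tau$-divisible because $\pi_{4,5}(\one_{\C}/2^r)=0$. Moreover $\eta^4$ maps to $\alpha_1^4\ne0$ in $\pi_{4,4}(C\tau/2^r)$. Hence $\beta_{2^r}$ is not a ghost.

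The paper uses weight sparseness instead. At odd $p$, \eqref{eq:isaksen} and \eqref{eq:ext-bound} put $\pi_{*,*}(C\tau/p^r)$ in weights divisible by $p-1$, while $\beta_{p^r}$ shifts weight by one. At $p=2$ you must replace $\beta$ by $2\beta_{2^{r+1}}$ on $\one_{\C}/(2^{r+1},\tau)$ and use Lemma~\ref{lem:odd-weight-torsion}.

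Your reason for $[Y,\Sigma^{1,-1}Y]_{C\tau}=0$ is also wrong. Under $\Sigma^{a,b}C\tau\leftrightarrow(\Sigma^{2b}BP_*)[a-2b]$, the shift $\Sigma^{1,-1}$ is the even internal shift $\Sigma^{-2}$ together with $[3]$. The group vanishes because it sits between $\pi_{0,1}Y$ and $\pi_{-1,1}Y$, i.e.\ $\Ext^{2,2}$ and $\Ext^{3,2}$, which lie below the vanishing line $t\geq 2(p-1)s$. The exact order then follows from $[Y,\Sigma^{1,-1}Y]\cong\pi_{0,0}Y=\Z/p^r$, with $\beta_{p^r}\leftrightarrow1$.

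\textbf{Part (3): the map is wrong, and the ghost argument is missing.} Your $\gamma$ is the bare $p\tau$-Bockstein $\beta_Z$, which is not a ghost. The class $p^3\in\pi_{0,0}(\one_{\C}/p^4)=\Z/p^4$ is $p\tau$-torsion. It is not $p\tau$-divisible, since $\pi_{0,1}(\one_{\C}/p^4)=0$. It maps to $p^3\ne0$ in $\pi_{0,0}Z\cong\Z/p^4$. Also $\operatorname{Re}(\beta_Z)=\id_{S/p^4}\wedge(\Sigma i_p)q_p\ne0$, so your realization claim fails as well.

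The map must be $p^3\beta_Z$. Appeals to ``$\tau$-torsion-freeness in low stems'' cannot give the ghost property: it involves all bidegrees, and $\pi_{*,*}\widehat\one_{\C}$ has $\tau$-torsion. The missing idea is to compare the $p\tau$- and $\tau$-cofiber sequences. The evident squares give $c:Z\to C\tau/p^4$ and $j:C\tau/p^4\to Z$ with
\[
(\Sigma^{1,-1}j)(p\cdot\beta_{p^4})c=p^3\beta_Z .
\]
So $\gamma$ factors through the finite $\tau$-Bockstein ghost $p\cdot\beta_{p^4}$, at every prime.

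Nonvanishing then comes from restricting along the quotient $D=\widehat\one_{\C}/(p\tau)\to Z$: one has $[D,\Sigma^{1,-1}Z]\cong\Z/p^4$, and $\gamma$ restricts to $p^3$. Detecting after $C\tau\wedge(-)$ also works (Remark~\ref{rem:p-tau-specialization}). Mod-$p$ motivic cohomology cannot see a multiple of $p$.

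\textbf{Part (1): nonvanishing after forgetting.} Faithfulness of the Gheorghe--Wang--Xu embedding gives $p^{r-1}\alpha^j\ne0$ only as $C\tau$-module maps. Refuting the conjecture in $\SH(\C)_{\cell}$ requires the underlying maps to be nonzero, and this is not formal. The paper gets it from the fact that the comparison is induced by $\mathrm{MGL}$-homology of the underlying spectrum, which carries $p^{r-1}\alpha^j$ to $p^{r-1}f^j\ne0$.
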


\noindent\textbf{The intrinsic construction.}
Realizing the comodule $M$ and its ghost $f$ described after Theorem~\ref{thm:algebraic-intro} gives the case $r=n=1$ of Theorem~\ref{thm:failure}(1):
\[
 X=C\tau/(p,\alpha_1),\qquad \alpha=iv_1q:X\longrightarrow X.
\]
Here $i$ and $q$ are the inclusion and quotient in the $\alpha_1$-cofiber sequence. The class $\alpha_1\in\pi_{2p-3,p-1}C\tau$ is represented by the cobar cocycle $[t_1]$ and equals $\eta$ at $p=2$. Figure~\ref{fig:R-cells} displays the four $C\tau$-cells.

\begin{figure}[htbp]
\centering
\begin{tikzpicture}[x=1cm,y=.85cm]
 \foreach \x/\p in {0/s,6/t}{
   \node[cell,label={[labeltext]left:$(0,0)$}] (\p0) at (\x,0) {};
   \node[cell,label={[labeltext]left:$(1,0)$}] (\p1) at (\x,1) {};
   \node[cell,label={[labeltext]left:$(2p-2,p-1)$}] (\p2) at (\x,3) {};
   \node[cell,label={[labeltext]left:$(2p-1,p-1)$}] (\p3) at (\x,4) {};
   \begin{scope}[on background layer]
   \draw (\p0)--node[labeltext,right]{$p$}(\p1);
   \draw (\p2)--node[labeltext,right]{$p$}(\p3);
   \draw (\p0.east) .. controls +(1.1,0) and +(1.1,0) ..
      node[labeltext,right,pos=.4]{$\alpha_1$}(\p2.east);
   \draw (\p1.east) .. controls +(1.7,0) and +(1.7,0) ..
      node[labeltext,right,pos=.6]{$\alpha_1$}(\p3.east);
   \end{scope}
 }
 \draw[rounded corners=3pt] (-2.3,2.65) rectangle (.4,4.35);
 \draw[rounded corners=3pt] (4.6,-.35) rectangle (6.4,1.35);
 \draw[map,preaction={draw=white,line width=3pt}]
   (.4,3.5) to[out=0,in=160]
   node[labeltext,above,pos=.57]{$v_1$}(4.6,.5);
 \node at (0,-.7) {$X$}; \node at (6,-.7) {$X$};
\end{tikzpicture}
\caption{The intrinsic ghost $\alpha=iv_1q$. Each object has four $C\tau$-cells, shown by the vertices, and a model with eight cells over the uncompleted motivic sphere. The arrow $v_1:\Sigma^{2p-2,p-1}(C\tau/p)\to C\tau/p$ joins the framed Moore quotient and subobject. The diagram is drawn in the style of Wang--Xu~\cite[Appendix~I]{WangXu61}.}
\label{fig:R-cells}
\end{figure}

The same comparison realizes the general projective-space families. For every $C\tau$-module $T$, free--forgetful adjunction gives
\[
 [\Sigma^{a,b}C\tau,T]_{C\tau} \cong[\Sigma^{a,b}\widehat\one_{\C},T]=\pi_{a,b}T.
\]
Thus the realized maps remain ghosts after forgetting the module structure. Algebraic cobordism homology $\mathrm{MGL}$ detects the nonzero multiples of their composition powers. Their finite coefficient filtrations give finite cellular models over the uncompleted sphere; see Section~\ref{sec:motivic}.

\noindent\textbf{The ambient Bockstein.}
Part~\textup{(2)} uses the $\tau$-Bockstein
\[
 \widetilde\beta_p=(\Sigma^{1,-1}i)q: C\tau\longrightarrow\Sigma^{1,-1}C\tau,
\]
where $i$ and $q$ belong to the $\tau$-cofiber sequence. At odd primes this map changes weight by one, while the homotopy groups of $C\tau$ occur only in weights divisible by $p-1$. This follows from Isaksen's identification with the classical Adams--Novikov $E_2$-page~\cite[Proposition~6.2.5]{Isaksen}, in its all-prime form~\cite[Corollary~4.8 and Proposition~1.10]{GWX}. The same restriction holds after smashing with $\one_{\C}/p^r$, so the induced map $\beta$ on $Y=\one_{\C}/(p^r,\tau)$ is a ghost. It has exact additive order $p^r$ (Proposition~\ref{prop:complex-quotient} and Figure~\ref{fig:complex-cells}). At $p=2$, take $Y=\one_{\C}/(2^{r+1},\tau)$ and let $\beta$ be twice its Bockstein. Then $\beta$ is a ghost of exact additive order $2^r$ (Propositions~\ref{prop:two-primary-complete} and~\ref{prop:two-primary-finite}). In algebraic cobordism theory $\mathrm{MGL}$, these Bocksteins induce zero on homology, whereas the intrinsic ghosts act nontrivially (Remark~\ref{rem:cobordism-contrast}).

\smallskip
\noindent\textbf{The example with nonzero Betti realization.}
For part~\textup{(3)}, replace the $\tau$-attachment on $\one_{\C}/p^4$ by $p\tau$, giving $Z=\one_{\C}/(p^4,p\tau)$. Let $\beta_Z$ be the Bockstein of this cofiber sequence. Comparison with the $\tau$-cofiber sequence expresses $\gamma=p^3\cdot\beta_Z$ as a composite through a finite Bockstein ghost (Corollary~\ref{cor:p-tau-finite} and Figure~\ref{fig:p-tau-cells}). The spectrum $Z$ has noncontractible Betti realization and no nonzero retract on which a power of $\tau$ acts trivially.

The specialized map $C\tau\wedge\gamma$ is not a ghost (Remark~\ref{rem:p-tau-specialization}). Thus forgetting a module structure preserves the ghost condition, but extension of scalars to $C\tau$ need not do so. All the ghost maps in Theorem~\ref{thm:failure} become null under Betti realization.

\smallskip
\noindent\textbf{Homotopy modules and nonfullness.}
Using the projective-space families, we also show that homotopy modules do not determine finite objects up to equivalence.

\begin{theorem}\label{thm:fullness-classification}
At every prime $p$, there are finite objects $A\not\simeq B$ with isomorphic bigraded homotopy modules in each of the following settings:
\begin{enumerate}[label=\textup{(\arabic*)},leftmargin=2em]
\item $\D_{\mathrm{fin}}(BP_*BP)$, with modules over $\Ext^{*,*}_{BP_*BP}(BP_*,BP_*)$;
\item $\SH(\C)_{\cell,p}^{\wedge}$, with modules over $\pi_{*,*}\widehat\one_{\C}$;
\item $\SH(\C)_{\cell}$ over the uncompleted sphere, with modules over $\pi_{*,*}\one_{\C}$.
\end{enumerate}
\end{theorem}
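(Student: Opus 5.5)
The plan is to run Freyd's argument in reverse, using a nonzero ghost from Theorem~\ref{thm:algebraic-intro} and Theorem~\ref{thm:failure}(1). In a triangulated category where the homotopy module functor is $\pi_*$, let $f:M\to M$ be a nonzero ghost between finite objects. Set
\[
A=\cofib(f)\qquad\text{and}\qquad B=M\oplus\Sigma M\quad(\text{suitably shifted}).
\]
Since $f$ is a ghost, the long exact sequence in homotopy splits into short exact sequences
\[
0\to\pi_*M\to\pi_*A\to\pi_*\Sigma M\to0
\]
of modules over the relevant coefficient ring ($\Ext_{BP_*BP}$, $\pi_{*,*}\widehat\one_\C$, or $\pi_{*,*}\one_\C$). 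To obtain $\pi_*A\cong\pi_*B$ as modules we need this extension to split. Splitting is not automatic, so I would choose $f$ more carefully.

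The cleanest choice is a ghost $f$ that factors as a composite of two ghosts, $f=g\circ h$; by Theorem~\ref{thm:algebraic-intro} we may take $f=g^2$ with $g^2\neq0$. The obstruction to splitting the extension is a class in $\Ext^1_{\pi_*}(\pi_*\Sigma M,\pi_*M)$. By the standard analysis of the ghost ideal (Christensen's ghost-projective class), this class is the image of $f$ under the map from the first ghost ideal to $\Ext^1$. That map kills composites of two ghosts, since their Adams filtration with respect to the sphere-projective class is at least $2$.

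So the key steps are as follows.
\begin{enumerate}[label=\textup{(\roman*)},leftmargin=2em]
\item Set up the projective class generated by the (bigraded) spheres. Check that $\pi_*$ of each finite object is a module, and that a ghost $f$ yields the extension class $e(f)\in\Ext^1(\pi_*\Sigma M,\pi_*M)$, natural and additive, with $e(gh)=0$ for ghosts $g,h$.
\item Take $f=g^2$ with $g$ the ghost from Theorem~\ref{thm:algebraic-intro}(1) for $n=2$, and its $C\tau$-linear realization from Theorem~\ref{thm:failure}(1). In the uncompleted setting, use the finite cellular model of $X$.
\item Conclude that $\pi_*\cofib(f)\cong\pi_*M\oplus\pi_*\Sigma M$ as modules.
\item Show that $\cofib(f)\not\simeq M\oplus\Sigma M$.
\end{enumerate}
In the $C\tau$-module settings, forgetting preserves ghosts, so the homotopy computations pass to the underlying motivic spectra.

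Step (iv) is the main obstacle. The natural route is the standard lemma that $\cofib(f)\simeq M\oplus\Sigma M$ forces $f=0$ in good situations, but that is only true for the specific cofiber sequence, not for abstract equivalences. So I would instead detect the difference by a homology invariant: algebraic cobordism $\mathrm{MGL}$ (respectively $BP_*$-homology, meaning the underlying comodule in the algebraic case). In these theories $f=g^2$ acts nontrivially, as Theorem~\ref{thm:failure}(1) asserts via $\mathrm{MGL}$-detection of the composition powers. The long exact sequence in $\mathrm{MGL}$-homology then shows that $\mathrm{MGL}_{*,*}\cofib(f)$ has a different size or length than $\mathrm{MGL}_{*,*}(M\oplus\Sigma M)$. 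For instance, count the $\F_p$-dimension of the mod-$(p,v_1,\dots)$ reduction in a fixed bidegree, where the kernel and cokernel of $\mathrm{MGL}_*(f)$ are strictly smaller than the whole module because the map is nonzero. Finiteness of the relevant graded pieces, which are finitely generated $BP_*$-modules bounded in each degree, makes this counting legitimate. In the algebraic setting $\D_{\mathrm{fin}}(BP_*BP)$, the analogous invariant is the cohomology comodule $H^*(-)$, on which $f=g^2$ acts as the nonzero comodule map $(1-\Theta_u)^2$.
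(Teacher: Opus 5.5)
Your proposal is correct. Steps (i)--(iii) match the paper, but step (iv) takes a genuinely different route, and the specific invariant you suggest there needs correcting.

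\textbf{Splitting.} This is the paper's Pirashvili--Redondo argument in Lemma~\ref{lem:double-ghost-classification}. The map of triangles from $\cofib(h)$ to $\cofib(gh)$ shows that the extension for $gh$ is the pushout of the extension for $h$ along $g_*=0$. That is exactly your $e(gh)=0$. Like the paper, you apply it to the square of the projective-space ghost with $r=1$, $n=2$, and to its motivic realization.

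\textbf{Non-equivalence: how the two routes differ.} The paper uses no homology theory here. It applies $[X,-]$ to the triangle $X\xrightarrow{\beta\alpha}Z\to A\to\Sigma X$. Since $\id_X\mapsto\beta\alpha\neq0$, exactness gives $|[X,A]|<|[X,Z]|\,|[X,\Sigma X]|=|[X,B]|$. The required finiteness of mapping groups follows from degreewise finiteness of $\Ext^{s,t}_{BP_*BP}(BP_*,M)$ by induction over cells. This counting is precisely the device that handles abstract equivalences, which you were worried about. It is formal, uniform across the three settings, and needs only $\beta\alpha\neq0$.

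Your route needs an auxiliary invariant on which the composite acts nontrivially. In the algebraic setting this is comodule cohomology $H^*$. In the motivic settings it is $\mathrm{MGL}$-homology, whose detection of composition powers is established in Proposition~\ref{prop:projective-realization}. In return you get a sharper conclusion: $A$ and $B$ have isomorphic homotopy modules yet are already distinguished by $\mathrm{MGL}$-homology (respectively by comodule cohomology).

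\textbf{The invariant you propose does not work.} You suggest comparing the $\F_p$-dimensions of reductions modulo $I=(p,v_1,v_2,\ldots)$. Modulo $I$ the formal group law becomes additive, so $[u]_F(x)\equiv x$ and $\Theta_u\equiv1$. Hence $f=g^2$ has image in $IM$ and reduces to zero modulo $I$. The mod-$I$ reduction of $\operatorname{coker}(f_*)$ then has the same dimension as that of $M$. The minimal generator count of $\ker(f_*)$ is not controlled by $f_*\neq0$ either. So ``nonzero'' gives you nothing for mod-$I$ reductions.

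\textbf{The fix.} Count the graded pieces themselves. $\mathrm{MGL}_{**}X$ is concentrated in Chow degree zero and is finite in each bidegree. In a bidegree $(2b,b)$ where $f_*\neq0$, the long exact sequence gives $\mathrm{MGL}_{2b,b}(\cofib f)\cong\operatorname{coker}(f_*)_{2b,b}$, because $\mathrm{MGL}_{2b-1,b}X=0$. This is strictly smaller than the finite group $\mathrm{MGL}_{2b,b}(X\oplus\Sigma X)=\mathrm{MGL}_{2b,b}X$.

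Algebraically, $H^0(\cofib f)=\operatorname{coker} f$ and $H^0(M\oplus M[1])=M$. Each $M_t$ is a finite $\F_p$-vector space, so the cokernel is strictly smaller in some internal degree.

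With this correction, your argument is complete.
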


Theorem~\ref{thm:fullness-classification} has the following consequence for nonfullness.

\begin{corollary}\label{cor:homotopy-not-full}
The functors $\Ext^{*,*}_{BP_*BP}(BP_*,-)$ and $\pi_{*,*}$ in Theorem~\ref{thm:fullness-classification} are not full on finite objects.
\end{corollary}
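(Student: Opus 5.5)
We prove Corollary~\ref{cor:homotopy-not-full} by contradiction. Fullness on finite objects, combined with the standard Freyd-type lifting argument, would force isomorphic homotopy modules to come from equivalent objects. Theorem~\ref{thm:fullness-classification} gives non-equivalent objects with isomorphic homotopy modules.

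Write $F$ for either functor: $\Ext^{*,*}_{BP_*BP}(BP_*,-)$ on $\D_{\mathrm{fin}}(BP_*BP)$, or $\pi_{*,*}$ on finite objects of $\SH(\C)_{\cell,p}^{\wedge}$ or $\SH(\C)_{\cell}$. Write $R$ for the corresponding graded endomorphism ring of the unit, so that $F$ lands in bigraded $R$-modules. Suppose $F$ is full on finite objects.

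Take the finite objects $A\not\simeq B$ from Theorem~\ref{thm:fullness-classification}, in the same setting, together with an $R$-module isomorphism $\phi:F(A)\to F(B)$. By fullness there are maps $g:A\to B$ with $F(g)=\phi$ and $g':B\to A$ with $F(g')=\phi^{-1}$. Then $F(g'g)=\id_{F(A)}$, so $e=\id_A-g'g$ is a ghost on $A$. Likewise $\id_B-gg'$ is a ghost on $B$.

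The key step is to show that $g'g$ and $gg'$ are equivalences. If $e$ is nilpotent, say $e^N=0$, then $g'g=\id-e$ has inverse $\sum_{k<N}e^k$. The same argument applies to $gg'$. Hence $g$ has both a left and a right inverse, so $g$ is an equivalence $A\simeq B$, a contradiction.

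The obstacle is that nilpotence of ghosts is not automatic. Indeed, Theorem~\ref{thm:algebraic-intro} shows that the ghost ideal need not be nilpotent on the whole category. However, on a single finite object, the ghost ideal is nilpotent of bounded order. To see this, filter $A$ by cells. A ghost out of a sphere $\Sigma^{2b}\one[a]$, or a bigraded sphere, is zero. Consequently, if $A$ is built from $N$ cells, the ghost-length argument (Christensen's ghost lemma, or induction on cofiber sequences $A'\to A\to S$ with $S$ a sphere) shows that any composite of $N$ ghosts out of $A$ vanishes. The inductive step works as follows. For a composite $e_1\cdots e_N$, the map $e_N$ restricted to $A'$ is handled by induction on the first $N-1$ factors. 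The remaining factor through $S$ is killed because a ghost composed after a map from a sphere is zero.

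Thus $e^N=0$ for $N$ the number of cells. Over the uncompleted sphere this uses finite cellularity, and in the $p$-complete and algebraic settings it uses the thick-subcategory description of finite objects, which is where Theorem~\ref{thm:fullness-classification}'s objects live. This completes the contradiction in each of the three settings.
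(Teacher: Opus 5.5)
Your proof is correct, but it takes a longer route than the paper at the key step.

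The paper lifts only $\phi$, to a single map $g:A\to B$, and then uses that the homotopy functor detects isomorphisms. Since $F(g)$ is an isomorphism in every bidegree, the long exact sequence gives $F(\cofib g)=0$. Because the shifted units thickly generate the finite objects, this forces $[K,\cofib g]=0$ for every finite $K$. In particular $\id_{\cofib g}=0$, so $g$ is already an equivalence.

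You instead also lift $\phi^{-1}$ and show that $g'g$ and $gg'$ have the form $\id-e$ with $e$ nilpotent. For this you use that ghost composites out of a fixed finite object vanish once they are long enough, via Christensen's ghost lemma.

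Both routes rest on the same input, namely generation by the unit shifts. Your version gives a slightly sharper conclusion: an endomorphism of a finite object inducing the identity on homotopy is unipotent, not just invertible. For this corollary, however, conservativity makes the second lift and the ghost lemma unnecessary.

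If you keep your route, phrase the induction in terms of Christensen's levels, or add the remark that retracts preserve the vanishing of $N$-fold ghost composites. Objects of a thick subcategory are in general only retracts of finite cell complexes, not cell complexes themselves.
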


\smallskip
\noindent\textbf{Homological consequences.}
Taking $r=1$ in Theorems~\ref{thm:algebraic-intro}(1) and \ref{thm:failure}(1) gives arbitrarily long nonzero composites of ghosts on finite objects annihilated by $p$. The ghost-dimension inequalities therefore give detecting modules of arbitrarily large injective and flat dimension over the bigraded endomorphism ring of the unit. All dimensions below are taken in the category of bigraded modules.

\begin{corollary}\label{cor:homological-consequences}
For every prime $p$ and integer $n\geq1$, there are an object $M\in\D_{\mathrm{fin}}(BP_*BP)$ and a finite cellular complex-motivic spectrum $X$, both annihilated by $p$, such that
\[
 \Ext^{*,*}_{BP_*BP}(BP_*,M) \qquad\text{and}\qquad \pi_{*,*}X
\]
each have flat, projective, and injective dimension at least $n$ over the bigraded rings
\[
 \Ext^{*,*}_{BP_*BP}(BP_*,BP_*) \qquad\text{and}\qquad \pi_{*,*}\widehat\one_{\C},
\]
respectively. Consequently, both rings have infinite graded weak global dimension and infinite graded global dimension.
\end{corollary}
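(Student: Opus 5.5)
We deduce Corollary~\ref{cor:homological-consequences} from Theorems~\ref{thm:algebraic-intro}(1) and~\ref{thm:failure}(1) with $r=1$ via the standard ghost-dimension inequalities. Fix $n$. Theorem~\ref{thm:algebraic-intro}(1) with $r=1$ and exponent $n+1$ gives $M\in\D_{\mathrm{fin}}(BP_*BP)$ with $p\cdot\id_M=0$ and a ghost $f:M\to M$ with $f^{n+1}\neq0$. Theorem~\ref{thm:failure}(1) likewise gives a finite cellular $X$ with $p\cdot\id_X=0$ and a ghost $\alpha$ with $\alpha^{n+1}\neq0$; since $\alpha$ is a ghost for $\pi_{a,b}$ over the uncompleted sphere and $X$ is $p$-torsion, hence $p$-complete, it is also a ghost in $\SH(\C)_{\cell,p}^{\wedge}$. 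In both settings, $R=\Hom^{*,*}(\one,\one)$, $\pi_*(-)=\Hom^{*,*}(\one,-)$, and we have a nonzero composite of $n+1$ ghosts out of an object $W$.

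The key input is the projective-class bound: if $\pi_*W$ has projective dimension $\le d$ over $R$, then every composite of $d+1$ ghosts out of $W$ is zero. To prove it, realize a projective resolution. A bigraded free $R$-module with a free surjection onto $\pi_*W$ is realized by a sum of shifted units $F_0\to W$. For compact $W$, $\pi_*W$ is finitely generated in each bidegree range we need, and in any case arbitrary sums of units exist in the ambient stable category. The cofiber $W_1$ of $F_0\to W$ satisfies $\pi_*W_1=\Sigma\ker$, and $W\to W_1$ is the universal ghost. Iterating gives the Adams tower, and a projective-dimension-$d$ module makes $W_d$ have projective homotopy, hence $W_d$ a retract of a sum of units, so the ghost $W_d\to W_{d+1}$ vanishes. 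A composite of $k$ ghosts out of $W$ factors through $W\to W_k$, which gives the claim. Applying this with $d=n-1$ shows that $\pi_*W$ has projective dimension $\ge n$.

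Injective and flat dimension use dual arguments. For injective dimension, use the injective (Brown--Comenetz type) dual: $\Hom_R(\pi_*(-),I)$ is representable on the compact part by a Brown representability argument, a nonzero ghost composite into $W$ dualizes to one out of the representing object, and the same tower argument applies. Alternatively, use Spanier--Whitehead duality on finite objects, which is available in both categories, and note that ghosts are preserved since $\Hom(\Sigma\one,DW)\cong\Hom(\Sigma W,\one)$. For flat dimension over a graded ring, use the Tor-version: flat dimension $\le d$ implies that length $d+1$ ghost composites on compact objects vanish. This follows from the Lazard presentation of flat modules as filtered colimits of free modules, realized as homotopy colimits. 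I expect this step to be the main obstacle: one must check that sums and homotopy colimits of units remain compatible with homotopy in the relevant large categories, namely $\operatorname{Stable}(BP_*BP)$ and $\SH(\C)_{\cell,p}^{\wedge}$, so that the phantom-free tower argument goes through.

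Finally, since projective dimension is at least flat dimension, the conclusion for projective dimension also follows from the flat bound. Letting $n$ vary, both the weak global dimension and the global dimension of each ring are infinite.
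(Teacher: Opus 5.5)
Your overall strategy matches the paper's: take the $r=1$ projective-space ghosts with a nonzero $n$-fold composite, check that the motivic one is still a ghost over $\widehat\one_{\C}$ because $X$ is $p$-complete, and apply the Christensen and Hovey--Lockridge ghost-dimension bounds. The gap is the step you flag yourself, and it is not just a verification: in $\SH(\C)_{\cell,p}^{\wedge}$ the completed unit $\widehat\one_{\C}$ is \emph{not} compact. Coproducts there are $p$-completed coproducts. For example, $\pi_{0,0}$ of the completed coproduct of countably many copies of $\widehat\one_{\C}$ is the $p$-completion of $\bigoplus_{\mathbb N}\Z_p$, which is strictly larger than $\bigoplus_{\mathbb N}\Z_p$. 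So in that category a free bigraded $\pi_{*,*}\widehat\one_{\C}$-module is not realized by a sum of shifted units, and maps out of such sums are not determined by homotopy. Your Adams-tower argument, the realization of projective or flat modules, and Brown representability for the injective case all fail there. You cannot avoid infinite sums either: nothing makes $\pi_{*,*}X$ or its syzygies finitely generated. The missing idea is to work in $\operatorname{Ind}\bigl(\Thick\{\Sigma^{a,b}\widehat\one_{\C}\}\bigr)$. There the shifted units are compact generators, $\pi_{*,*}$ detects isomorphisms, and maps between your finite objects are unchanged. Your $X$ lies in this thick subcategory because it is $p$-complete, so completing its cell structure expresses it in completed spheres. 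This is exactly how the paper argues: Proposition~\ref{prop:ghost-dimension-bound} is proved in the Ind-completion of a small, thickly generated category and then applied to the thick subcategory of completed spheres. In the algebraic case, $\operatorname{Stable}(BP_*BP)$ does work, since $BP_*$ is dualizable and hence compact. You should still restrict to the localizing subcategory generated by the unit shifts, so that an object with projective homotopy is a retract of a sum of units.

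Two further points. First, your Spanier--Whitehead alternative for injective dimension does not work. Your identity $\Hom(\Sigma\one,DW)\cong\Hom(\Sigma W,\one)$ shows that $Dg$ is a ghost exactly when $g$ induces zero on cohomotopy $[-,\Sigma^{a,b}\one]$, which is a different condition. Even granting that, you would bound dimensions of $\pi_{*,*}(DW)$, not the injective dimension of $\pi_{*,*}W$. What you need is Christensen's dual argument. Use objects $E_I$ with $[Z,E_I]\cong\Hom_R(\pi_{*,*}Z,I)$ for injective $I$, which exist by Brown representability in the compactly generated Ind-category. Then build the injective Adams tower under $W$, through which composites of ghosts \emph{into} $W$ factor; nothing gets dualized. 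Second, for flat dimension the statement you need is Hovey--Lockridge's: if $\operatorname{fd}_R\pi_{*,*}W<n$, then every composite of $n$ ghosts out of $W$ is phantom, hence zero for compact $W$. Your Lazard and homotopy-colimit sketch points in the right direction, but as written it does not prove this. The paper simply cites it, and so can you. Once the flat bound holds, your remark that $\operatorname{pd}\geq\operatorname{fd}$ does give the projective bound for free.
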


\smallskip
\noindent\textbf{Organization of the paper.}
Sections~\ref{sec:complete}--\ref{sec:complex} construct the ambient Bockstein ghosts and their finite models over the motivic sphere. Section~\ref{sec:proof} proves the coefficient criterion and constructs the global and local algebraic ghosts. Section~\ref{sec:motivic} realizes the global algebraic examples as intrinsic motivic ghosts. Section~\ref{sec:fullness-classification} proves Theorem~\ref{thm:fullness-classification} and Corollary~\ref{cor:homotopy-not-full}. Section~\ref{sec:homological-consequences} proves Corollary~\ref{cor:homological-consequences}. Section~\ref{sec:fields} extends the intrinsic examples to real-motivic spectra, including the projective-space families, to algebraically closed fields of characteristic zero, and to Chow-degree-zero module categories over arbitrary fields.

\medskip
\noindent\textbf{Acknowledgments.}
The second author is partially supported by the AMS Centennial Research Fellowship and NSF Grant DMS~2506247.
\FloatBarrier

\Needspace{8\baselineskip}
\section{The \texorpdfstring{$\tau$}{tau}-Bockstein}\label{sec:complete}

Fix a prime $p$. We write $L_p=L_{\one_k/p}$ for $p$-completion, the Bousfield localization with respect to $\one_k/p$, and $\SH(k)_{\cell,p}^{\wedge}$ for the complete category. Its unit is $\widehat\one_k=L_p\one_k$, and its smash product is completed. For a complete spectrum $T$, localization adjunction gives
\[
 [\Sigma^{a,b}\widehat\one_k,T]=[\Sigma^{a,b}\one_k,T]=\pi_{a,b}T.
\]
By Hu--Kriz--Ormsby~\cite[Theorem~1]{HKO}, $\mathrm{H}\F_p$-nilpotent completion and $p$-completion agree on finite cellular spectra over $\C$ at every prime. The main argument is at odd primes; the prime two is treated in Proposition~\ref{prop:two-primary-complete}.

Choose the usual class $\tau:\Sigma^{0,-1}\widehat\one_{\C}\to \widehat\one_{\C}$ and form
\begin{equation}\label{eq:tau-triangle}
 \Sigma^{0,-1}\widehat\one_{\C}\xrightarrow{\tau}\widehat\one_{\C} \xrightarrow{i}C\tau\xrightarrow{q}\Sigma^{1,-1}\widehat\one_{\C}.
\end{equation}
\begin{definition}\label{def:tau-bockstein}
The \emph{$\tau$-Bockstein} is the composite
\begin{equation}\label{eq:beta}
 \widetilde\beta_p: C\tau\xrightarrow{q}\Sigma^{1,-1}\widehat\one_{\C} \xrightarrow{\Sigma^{1,-1}i}\Sigma^{1,-1}C\tau.
\end{equation}
\end{definition}
\begin{proposition}\label{prop:ambient}
Let $p$ be odd. The $\tau$-Bockstein satisfies
\[
 [C\tau,\Sigma^{1,-1}C\tau]\cong \Z_p\{\widetilde\beta_p\}.
\]
The map $\widetilde\beta_p$ is a nonzero ghost of infinite additive order. With the natural $C\tau$-module structures, no nonzero multiple of $\widetilde\beta_p$ is $C\tau$-linear.
\end{proposition}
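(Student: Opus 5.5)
We compute $[C\tau,\Sigma^{1,-1}C\tau]$ by mapping the $\tau$-cofiber sequence \eqref{eq:tau-triangle} into $\Sigma^{1,-1}C\tau$. This gives the exact sequence
\[
 \pi_{1,-1}C\tau\xleftarrow{\ i^*\ }[C\tau,\Sigma^{1,-1}C\tau]\xleftarrow{\ q^*\ }[\Sigma^{1,-1}\widehat\one_{\C},\Sigma^{1,-1}C\tau]=\pi_{0,0}C\tau\xleftarrow{\ \tau^*\ }\pi_{0,-1}C\tau ,
\]
where the left-hand term $[\widehat\one_{\C},\Sigma^{1,-1}C\tau]$ is $\pi_{-1,1}C\tau$.

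The input is Isaksen's identification, in its all-prime form \cite[Corollary~4.8 and Proposition~1.10]{GWX}: $\pi_{a,b}C\tau\cong\Ext^{2b-a,2b}_{BP_*BP}(BP_*,BP_*)$, with the degreewise $p$-completion understood. At odd $p$ these groups vanish unless $2b$ is divisible by $2(p-1)$, i.e. unless $b\equiv 0\pmod{p-1}$. They also vanish for negative Adams--Novikov filtration $s=2b-a$.
\begin{enumerate}[label=\textup{(\roman*)},leftmargin=2em]
\item The group $\pi_{-1,1}C\tau$ has filtration $s=3$ and weight $1$. Since $p$ is odd, $p-1\geq2$ does not divide $1$, so this group is zero. (The phrase ``$[C\tau,\Sigma^{1,-1}C\tau]$'' reflects that at $p=2$ this argument breaks, which is why $p=2$ is treated separately.)
\item The group $\pi_{0,-1}C\tau$ has weight $-1$, which is again not divisible by $p-1$, so it is zero.
\item The group $\pi_{0,0}C\tau=\Ext^{0,0}=\Z_p$.
\end{enumerate}
Therefore $q^*$ is an isomorphism $\Z_p\cong[C\tau,\Sigma^{1,-1}C\tau]$. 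It sends the generator $1=i\in\pi_{0,0}C\tau$, suspended, to $(\Sigma^{1,-1}i)q=\widetilde\beta_p$. This yields the displayed isomorphism, and since $\Z_p$ is torsion-free, $\widetilde\beta_p$ is nonzero of infinite additive order. I expect the main care to go into the bookkeeping of which Ext degrees correspond to which bidegrees. The vanishing of $\pi_{0,-1}$ in particular needs the weight restriction, not merely connectivity.

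To show $\widetilde\beta_p$ is a ghost, take any $x\in\pi_{a,b}C\tau$. Then $\widetilde\beta_p x$ lies in $\pi_{a,b}\Sigma^{1,-1}C\tau=\pi_{a-1,b+1}C\tau$. If $\pi_{a,b}C\tau\neq0$, then $(p-1)\mid b$, so $(p-1)\nmid b+1$ because $p-1\geq2$. Hence the target vanishes and $\widetilde\beta_p$ is a ghost.

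For the linearity claim, use the free--forgetful adjunction. It gives
\[
[C\tau,\Sigma^{1,-1}C\tau]_{C\tau}\cong[\widehat\one_{\C},\Sigma^{1,-1}C\tau]=\pi_{-1,1}C\tau=0
\]
by (i), since here $C\tau$ is the free module on the unit. Thus every $C\tau$-linear map $C\tau\to\Sigma^{1,-1}C\tau$ is zero. A nonzero multiple $m\widetilde\beta_p$ is nonzero by the infinite order just proved, so it cannot be $C\tau$-linear. Alternatively, a $C\tau$-linear map would be determined by its restriction along the unit $i$, and $\widetilde\beta_p i=(\Sigma^{1,-1}i)qi=0$.
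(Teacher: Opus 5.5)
Your proof is essentially the paper's: map the $\tau$-cofiber sequence into $\Sigma^{1,-1}C\tau$, show the outer terms vanish, use the weight restriction for the ghost property, and use the free--forgetful adjunction for $C\tau$-linearity. There is one bookkeeping slip, which does not affect the conclusion. The term preceding $\pi_{0,0}C\tau$ is $[\Sigma^{1,0}\widehat\one_{\C},\Sigma^{1,-1}C\tau]=\pi_{0,1}C\tau$, not $\pi_{0,-1}C\tau$, and the display's $\pi_{1,-1}C\tau$ should read $\pi_{-1,1}C\tau$. The group $\pi_{0,1}C\tau$ also vanishes, and contrary to your remarks, no weight argument is needed for it: the paper kills both outer terms using only the region $a<b$, which is why this computation survives unchanged at $p=2$.
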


\begin{proof}
Recall that $BP_*=\Z_{(p)}[v_1,v_2,\ldots]$ and $BP_*BP=BP_*[t_1,t_2,\ldots]$. The generators $v_i,t_i$ have degree $2(p^i-1)$~\cite[Theorems~4.1.18--4.1.19]{Ravenel}. We write $s,t$ for cohomological and internal degree, and call $t/2$ the weight. For any prime $p$ and $B=\Z_{(p)},\Z_p$, or $\Z/p^r$ with $r\geq1$, we have \cite[Note~1.12 and Lemma~1.16]{MRW}
\begin{equation}\label{eq:ext-bound}
 \begin{gathered}
 \Ext^{s,t}_{BP_*BP}(BP_*,BP_*\otimes B)=0\\
 \text{if }s<0,\quad t<2(p-1)s,\quad\text{or }
 t\not\equiv0\pmod{2(p-1)}.
 \end{gathered}
\end{equation}
Tensor products with $B$ and $\Z_p$ are over $\Z_{(p)}$. The normalized cobar complex for $BP_*$ is degreewise finite free, so Ext commutes with flat extension to $\Z_p$ and reduction modulo $p^r$ is unchanged by completion. In internal degree zero, only $\Ext^{0,0}=B$ is nonzero.

The identification of Isaksen~\cite[Proposition~6.2.5]{Isaksen} and Gheorghe~\cite[Corollary~3.14]{Gheorghe} at $p=2$, extended to every prime by Gheorghe--Wang--Xu \cite[Corollary~4.8 and Proposition~1.10]{GWX}, gives
\begin{equation}\label{eq:isaksen}
 \pi_{a,b}C\tau\cong \Ext^{2b-a,2b}_{BP_*BP}(BP_*,BP_*)\otimes\Z_p.
\end{equation}
Consequently, $\pi_{a,b}C\tau=0$ unless $0\leq b\leq a\leq2b$; it also vanishes unless $b\equiv0\pmod{p-1}$. The unit represents $1\in\pi_{0,0}C\tau=\Z_p$.

We use the vanishing region to compute the mapping group, and the restriction on weights to prove the ghost condition. Applying $[-,\Sigma^{1,-1}C\tau]$ to \eqref{eq:tau-triangle} gives
\[
 \pi_{0,1}C\tau\longrightarrow\pi_{0,0}C\tau \xrightarrow{q^*}[C\tau,\Sigma^{1,-1}C\tau] \longrightarrow\pi_{-1,1}C\tau.
\]
The outer groups vanish, and $q^*$ sends the shifted unit to $\widetilde\beta_p$. Hence
\begin{equation}\label{eq:complete-maps}
 [C\tau,\Sigma^{1,-1}C\tau]\cong\Z_p\{\widetilde\beta_p\}.
\end{equation}
The homomorphism
\[
 (\widetilde\beta_p)_*:\pi_{a,b}C\tau \longrightarrow\pi_{a-1,b+1}C\tau
\]
is zero: its source and target weights differ by one, whereas nonzero groups have weights divisible by $p-1$. Finally,
\[
 [C\tau,\Sigma^{1,-1}C\tau]_{C\tau} =\pi_{-1,1}C\tau=0,
\]
which proves that no nonzero multiple of the Bockstein is $C\tau$-linear.
\end{proof}

The same restriction on weights also controls $\tau$-torsion in the completed sphere.

\begin{corollary}\label{cor:tau-divisibility}
Let $p$ be odd. If $x\in\pi_{a,b}\widehat\one_{\C}$ satisfies $\tau\cdot x=0$, then $x$ is $\tau^{p-2}$-divisible.
\end{corollary}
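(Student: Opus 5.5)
We use the long exact sequence of the $\tau$-cofiber sequence \eqref{eq:tau-triangle} in homotopy, together with the weight restriction on $\pi_{*,*}C\tau$ established in the proof of Proposition~\ref{prop:ambient}. The plan is to peel off one factor of $\tau$ at a time.

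First, suppose $\tau\cdot x=0$ with $x\in\pi_{a,b}\widehat\one_{\C}$. Exactness of
\[
 \pi_{a+1,b}C\tau\xrightarrow{q_*}\pi_{a,b+1}\widehat\one_{\C}\xrightarrow{\tau}\pi_{a,b}\widehat\one_{\C}
\]
(after matching the $\Sigma^{0,-1}$ shift: $x$ sits in $\pi_{a,b}$ of the source of $\tau$) shows that $x=q_*(y)$ for some $y$ in a homotopy group of $C\tau$. That group has weight $b-1$ in our indexing, i.e.\ $x$ lifts along $q:C\tau\to\Sigma^{1,-1}\widehat\one_{\C}$ from a class $y\in\pi_{a+1,b-1}C\tau$. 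If $y=0$ then $x=0$, which is trivially divisible, so we may assume $y\neq 0$. Then \eqref{eq:isaksen} forces $b-1\equiv0\pmod{p-1}$.

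Second, we check divisibility by $\tau^{j}$ inductively for $1\le j\le p-2$. Assume $x=\tau^{j-1}x_{j-1}$ with $x_{j-1}\in\pi_{a,b+j-1}\widehat\one_{\C}$; the base case is $x_0=x$. We want $x_{j-1}$ to be divisible by $\tau$. By exactness at $\pi_{a,b+j-1}\widehat\one_{\C}$, it suffices that $i_*x_{j-1}=0$ in $\pi_{a,b+j-1}C\tau$. This group vanishes unless $b+j-1\equiv0\pmod{p-1}$. Since $b\equiv1$, this would require $j\equiv0\pmod{p-1}$, which fails for $1\le j\le p-2$. So $x_{j-1}=\tau x_j$, and after $p-2$ steps we obtain $x=\tau^{p-2}x_{p-2}$.

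The main obstacle is the bookkeeping of bidegrees under the $\Sigma^{0,-1}$ and $\Sigma^{1,-1}$ shifts, which must produce the congruence $b\equiv 1\pmod{p-1}$ for the weight of $x$, so that the subsequent weights $b,b+1,\dots,b+p-3$ avoid multiples of $p-1$. I would verify this carefully using the convention that $\tau$ has bidegree $(0,-1)$, so that multiplication by $\tau$ lowers weight by one, and adjust the off-by-one if needed. The case $p=3$ gives $\tau^{1}$-divisibility, which serves as a consistency check: the step $j=1$ then uses that weight $b$ is not divisible by $2$.
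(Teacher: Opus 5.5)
Your proof is correct and is essentially the paper's proof: lift $x$ along $q_*$ to $y\in\pi_{a+1,b-1}C\tau$ to get $b\equiv1\pmod{p-1}$, then use the vanishing of $\pi_{a,b+j}C\tau$ for $0\le j\le p-3$ to make $\tau:\pi_{a,b+j+1}\widehat\one_{\C}\to\pi_{a,b+j}\widehat\one_{\C}$ surjective and lift $p-2$ times. No off-by-one adjustment is needed; only your displayed sequence is written one weight too high (the one you actually use is $\pi_{a+1,b-1}C\tau\xrightarrow{q_*}\pi_{a,b}\widehat\one_{\C}\xrightarrow{\tau}\pi_{a,b-1}\widehat\one_{\C}$), and your next sentence already uses the correct degree.
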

\begin{proof}
We may assume $x\ne0$. Choose $y\in\pi_{a+1,b-1}C\tau$ with $q_*(y)=x$. Then $y\ne0$, so $b\equiv1\pmod{p-1}$. For $0\leq j\leq p-3$, the group $\pi_{a,b+j}C\tau$ vanishes, so exactness makes
\[
 \tau:\pi_{a,b+j+1}\widehat\one_{\C} \longrightarrow\pi_{a,b+j}\widehat\one_{\C}
\]
surjective. Lifting $x$ successively gives $x=\tau^{p-2}\cdot z$.
\end{proof}

Corollary~\ref{cor:tau-divisibility} also follows from the rigidity of the motivic Adams--Novikov spectral sequence at odd primes \cite[Propositions~5.3 and~5.6]{Stahn}.

For $p=2$, we need the following lemma, which is a special case of Burklund's torsion theorem~\cite[Corollary~2.6]{Burklund}. For completeness, we include a proof.

\begin{lemma}\label{lem:odd-weight-torsion}
At $p=2$, we have
\begin{equation}\label{eq:coefficient-torsion}
 2\cdot\Ext^{s,4k+2}_{BP_*BP}(BP_*,BP_*)=0 \qquad(s\geq0,\ k\in\Z).
\end{equation}
\end{lemma}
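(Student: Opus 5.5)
Our plan is to use the $p=2$ analogue of the formal-multiplication argument behind Proposition~\ref{prop:intro-criterion}, applied directly to $BP_*$ as a comodule. The key point is that the strict automorphism "multiplication by $-1$" on the formal group acts on comodules. It is \emph{not} strict, so it does not act trivially on $\Ext$. Instead it acts by the sign $(-1)^{t/2}$ in internal degree $t$, and a separate argument shows that it must also act as the identity on $\Ext$.

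Concretely, we consider the automorphism $[-1]$ of the universal $2$-typical formal group. More generally, take any $2$-adic unit $u$; it scales the coordinate by $u$, so it sends $v_i\mapsto u^{2^i-1}v_i$ and acts on $BP_*BP$ compatibly. This gives a natural automorphism $\Theta_u$ of the Hopf algebroid and of $\Ext_{BP_*BP}(BP_*,BP_*)$, which acts on $\Ext^{s,t}$ by multiplication by $u^{t/2}$, since grading by weight is precisely the $\mathbb{G}_m$-action by coordinate scaling.

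On the other hand, $\Theta_u$ is induced by an automorphism of the formal group, namely $[u]$. Scaling the coordinate by $u$ and applying the isomorphism $[u](x)$ give formal groups that are strictly isomorphic. More precisely, the coordinate change $x\mapsto [u]_F(x)$ composed with scaling by $u^{-1}$ is a strict isomorphism from $F$ to itself. The standard argument (conjugation by a groupoid element acts trivially on cohomology of the stack; cf.\ the construction in Section~\ref{subsec:strict-operation}) then shows that the resulting endomorphism of $\Ext$ is the identity. It follows that $u^{t/2}$ acts as the identity on $\Ext^{s,t}$. This requires working over $\Z_2$, with $u$ a $2$-adic unit, where $[u]$ makes sense; since the normalized cobar complex is degreewise finite free, Ext over $\Z_{(2)}$ injects into Ext over $\Z_2$, so nothing is lost.

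We then take $u=-1$, which is integral, so $[-1]_F$ exists already over $\Z_{(2)}$. For $t=4k+2$ we get $t/2$ odd, hence $(-1)^{t/2}=-1$ acts as the identity on $\Ext^{s,4k+2}$. Therefore $2\cdot\Ext^{s,4k+2}=0$, which is \eqref{eq:coefficient-torsion}.

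The main obstacle is making the "inner automorphisms act trivially" step rigorous at the cobar level. The cleanest route is to observe that $g(x)=[-1]_F(-x)$ is a strict automorphism of $F$, so it is given by a ring map $\phi:BP_*BP\to BP_*$ with $\phi(t_i)$ prescribed. Conjugation by a Hopf-algebroid "point" of this kind induces on the cobar complex a map that is chain homotopic to the identity. The homotopy is $c\mapsto$ the usual contracting operator built from $\phi$, exactly as in the proof of Proposition~\ref{prop:intro-criterion}. We would verify that the composite of the sign action with this conjugation is the identity on cobar cochains up to this explicit homotopy.
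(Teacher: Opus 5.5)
Your strategy is the paper's: the involution $\sigma(a)=(-1)^{|a|/2}a$ (your $u=-1$ scaling) acts on the cobar complex by $(-1)^{t/2}$ in internal degree $t$, and $[-1]_F$ is used to show $\sigma_*=\id$ on $\Ext$. However, the step that carries the argument is misstated, and as written it fails. Neither $u^{-1}[u]_F(x)$ nor $g(x)=[-1]_F(-x)$ is a strict automorphism of $F$. Since $BP_*$ is torsion-free, comparing logarithms shows that the only strict automorphism of $F$ is the identity. Indeed $\log_F(g(x))=-\log_F(-x)\neq\log_F(x)$. If your $\phi$ really classified a strict automorphism, so that $\phi\eta_L=\phi\eta_R=\id$, it would be the counit. ``Conjugation by $\phi$'' would then say nothing about $\sigma$. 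The reference to Proposition~\ref{prop:intro-criterion} does not help either: that proof uses a natural automorphism of the identity functor on comodules, not a cobar homotopy, and it applies only to strict automorphisms.

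What is true is the following. The map $h_F(x)=-[-1]_F(x)$ (the inverse of your $g$, and your $u^{-1}[u]_F$ at $u=-1$) is a strict isomorphism $F\to\sigma F$, where $(\sigma F)(x,y)=-F(-x,-y)$ is the pullback of $F$ along $\sigma$. Its classifying map $c\colon BP_*BP\to BP_*$ therefore has $c\eta_L=\id$ and $c\eta_R=\sigma$; this mismatch of units is what links it to the sign action. You must then check naturality: for every strict isomorphism $f\colon F\to G$,
\[
h_G\circ f=(\sigma f)\circ h_F, \qquad (\sigma f)(x)=-f(-x).
\]
This holds because $f\circ[-1]_F=[-1]_G\circ f$. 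It says exactly that the Hopf algebroid maps $\id$ and $\sigma$ are naturally equivalent. Ravenel's lemma that naturally equivalent maps induce the same map on $\Ext$ \cite[Definition~6.1.4 and Lemma~6.1.5]{Ravenel} then gives $\sigma_*=\id$. Hence $x=-x$ in $\Ext^{s,4k+2}$, which proves \eqref{eq:coefficient-torsion}. With this correction your argument is the paper's proof, and the $2$-adic detour through general $u$ is unnecessary.

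Alternatively, your first paragraph's idea can be made rigorous with the mechanism of Proposition~\ref{prop:intro-criterion}, by working with the groupoid of all, not necessarily strict, isomorphisms, whose comodules model graded comodules. There $[-1]_F$ is a central automorphism. It acts by $(-1)^{t/2}$ on $\Sigma^tBP_*$ and trivially on $BP_*$, so naturality forces $((-1)^{t/2}-1)\Ext^{s,t}=0$.
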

\begin{proof}
Let $C^*_{BP_*BP}(BP_*)$ be Ravenel's cobar complex~\cite[A1.2.11--A1.2.12]{Ravenel}. Let $\sigma(a)=(-1)^{|a|/2}a$ for homogeneous $a$ in $BP_*$ or $BP_*BP$. In the formal-group description~\cite[Theorem~4.1.19]{Ravenel}, Novikov's Adams operation $\Psi_U^{-1}$ \cite[Appendix~2, Theorem~2(a)(2)--(4)]{Novikov} gives the strict isomorphism
\[
 h_F(x)=-[-1]_F(x):F\xrightarrow{\cong}\sigma F, \qquad (\sigma F)(x,y)=-F(-x,-y),
\]
for every $2$-typical formal group law $F$, where $[-1]_F$ is its formal inverse. This is natural: for every strict isomorphism $f:F\to G$,
\[
 h_G\circ f=(\sigma f)\circ h_F, \qquad (\sigma f)(x)=-f(-x).
\]
Thus $\sigma$ and the identity are naturally equivalent Hopf algebroid maps, so they induce the same map on $\Ext_{BP_*BP}(BP_*,BP_*)$ \cite[Definition~6.1.4 and proof of Lemma~6.1.5]{Ravenel}. But $\sigma$ acts on internal degree $t$ of $C^*_{BP_*BP}(BP_*)$ by $(-1)^{t/2}$. Hence every $x\in\Ext^{s,4k+2}_{BP_*BP}(BP_*,BP_*)$ satisfies $x=\sigma_*(x)=-x$, as required.
\end{proof}

\begin{proposition}\label{prop:two-primary-complete}
Let $p=2$. The $\tau$-Bockstein satisfies
\[
 [C\tau,\Sigma^{1,-1}C\tau]\cong \Z_2\{\widetilde\beta_2\}.
\]
The map $2\cdot\widetilde\beta_2$ is a nonzero ghost of infinite additive order. With the natural $C\tau$-module structures, no nonzero multiple of $\widetilde\beta_2$ is $C\tau$-linear.
\end{proposition}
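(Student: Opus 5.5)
The plan is to follow the proof of Proposition~\ref{prop:ambient}, replacing the weight argument with Lemma~\ref{lem:odd-weight-torsion}. The first step is to compute the mapping group. Apply $[-,\Sigma^{1,-1}C\tau]$ to \eqref{eq:tau-triangle}; this gives the exact sequence
\[
 \pi_{0,1}C\tau\longrightarrow\pi_{0,0}C\tau\xrightarrow{q^*}[C\tau,\Sigma^{1,-1}C\tau]\longrightarrow\pi_{-1,1}C\tau.
\]
By \eqref{eq:isaksen} and the vanishing region \eqref{eq:ext-bound}, which hold at $p=2$ as well, $\pi_{a,b}C\tau=0$ unless $0\le b\le a\le 2b$. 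Hence $\pi_{0,1}C\tau=\pi_{-1,1}C\tau=0$, and $q^*$ sends the shifted unit of $\pi_{0,0}C\tau=\Z_2$ to $\widetilde\beta_2$. This gives $[C\tau,\Sigma^{1,-1}C\tau]\cong\Z_2\{\widetilde\beta_2\}$. In particular $2\widetilde\beta_2$ is nonzero of infinite additive order.

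For $C\tau$-linearity, the free--forgetful adjunction gives $[C\tau,\Sigma^{1,-1}C\tau]_{C\tau}=\pi_{-1,1}C\tau=0$. So no nonzero multiple of $\widetilde\beta_2$ is $C\tau$-linear.

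The main step is the ghost property of $2\widetilde\beta_2$. The induced map is $(\widetilde\beta_2)_*:\pi_{a,b}C\tau\to\pi_{a-1,b+1}C\tau$, and its source and target weights $b$ and $b+1$ have opposite parity. By \eqref{eq:isaksen}, the group in weight $b$ is $\Ext^{2b-a,2b}\otimes\Z_2$, with internal degree $t=2b$. When $b$ is odd, $t\equiv2\pmod 4$. So exactly one of source and target is $2$-torsion by Lemma~\ref{lem:odd-weight-torsion}, which is preserved by the flat extension $\otimes\Z_2$. Since $(\widetilde\beta_2)_*$ is a homomorphism, $2\cdot(\widetilde\beta_2)_*$ vanishes in both cases:
\begin{itemize}[leftmargin=2em]
\item if the source has odd weight, then $2x=0$ for every source element $x$, so $2(\widetilde\beta_2)_*(x)=(\widetilde\beta_2)_*(2x)=0$;
\item if the target has odd weight, then $2$ annihilates the target, so $2(\widetilde\beta_2)_*(x)=0$ directly.
\end{itemize}
Hence $2\widetilde\beta_2$ induces zero on all $\pi_{a,b}$ and is a ghost.

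I expect the main care to go into confirming that the identification \eqref{eq:isaksen} is valid at $p=2$ and compatible with degrees, so that odd weight corresponds exactly to $t\equiv2\pmod4$. Once that is checked, the remaining steps are formal.
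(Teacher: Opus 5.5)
Your proposal is correct and follows the paper's proof: you carry over the mapping-group and $C\tau$-linearity computations from Proposition~\ref{prop:ambient} unchanged, using the vanishing region, and you get the ghost property from Lemma~\ref{lem:odd-weight-torsion} via \eqref{eq:isaksen}, since exactly one of the weights $b$ and $b+1$ is odd. Your two-case argument for why $2\cdot(\widetilde\beta_2)_*$ vanishes spells out the paper's one-line conclusion. One small wording point: the lemma only guarantees that the odd-weight group is $2$-torsion, so ``at least one of source and target is $2$-torsion'' is the accurate phrasing, and it is all your argument uses.
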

\begin{proof}
At $p=2$, the Bockstein is studied in \cite[\S4.2]{Gheorghe}, and \eqref{eq:isaksen} is Isaksen's theorem \cite[Proposition~6.2.5]{Isaksen}. The mapping-group and $C\tau$-linearity calculations in the proof of Proposition~\ref{prop:ambient} apply unchanged. In particular, $2\cdot\widetilde\beta_2$ is nonzero and has infinite additive order.

For the ghost condition, Lemma~\ref{lem:odd-weight-torsion} and \eqref{eq:isaksen} give $2\cdot\pi_{a,b}C\tau=0$ when $b$ is odd. Exactly one of $b$ and $b+1$ is odd, so $2\cdot\widetilde\beta_2$ induces zero on every homotopy group.
\end{proof}

The following is the prime-two analog of Corollary~\ref{cor:tau-divisibility}.

\begin{corollary}\label{cor:two-primary-tau-divisibility}
Let $p=2$. If $x\in\pi_{a,b}\widehat\one_{\C}$ satisfies $\tau\cdot x=0$, then $2\cdot x$ is $\tau$-divisible.
\end{corollary}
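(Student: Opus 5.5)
Mirror the proof of Corollary~\ref{cor:tau-divisibility}, with Lemma~\ref{lem:odd-weight-torsion} replacing the weight restriction. We may assume $x\ne0$. The long exact sequence of \eqref{eq:tau-triangle} in homotopy reads
\[
 \pi_{a,b+1}\widehat\one_{\C}\xrightarrow{\tau}\pi_{a,b}\widehat\one_{\C}\xrightarrow{i_*}\pi_{a,b}C\tau\xrightarrow{q_*}\pi_{a-1,b+1}\widehat\one_{\C}\xrightarrow{\tau}\pi_{a-1,b}\widehat\one_{\C}.
\]
Since $\tau\cdot x=0$ with $x\in\pi_{a,b}$, exactness at the appropriate spot gives $y\in\pi_{a+1,b-1}C\tau$ with $q_*(y)=x$, and $y\neq0$.

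First I check the parity of the weight of $y$. By \eqref{eq:isaksen}, $\pi_{a+1,b-1}C\tau\cong\Ext^{2b-a-3,2b-2}\otimes\Z_2$. Lemma~\ref{lem:odd-weight-torsion} says this group is annihilated by $2$ when $b-1$ is odd. If $b-1$ is odd, then $2y=0$, so $2x=q_*(2y)=0$, and $2x$ is trivially $\tau$-divisible.

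It remains to handle $b-1$ even, i.e.\ $b$ odd. Here I use exactness one step further: $2x$ is $\tau$-divisible exactly when $i_*(2x)=0$ in $\pi_{a,b}C\tau$. This group is $\Ext^{2b-a,2b}\otimes\Z_2$, which has internal degree $2b\equiv2\pmod 4$. By Lemma~\ref{lem:odd-weight-torsion} it is annihilated by $2$, so $i_*(2x)=2\,i_*(x)=0$. Then $2x$ lies in the image of $\tau:\pi_{a,b+1}\to\pi_{a,b}$, as required.

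In fact this last argument does not use the hypothesis $\tau x=0$ or the parity of $b-1$ beyond $b$ odd, so the case split reduces to just the parity of $b$. The only delicate point is matching the gradings of \eqref{eq:isaksen} with the internal degree $4k+2$ in \eqref{eq:coefficient-torsion}, and getting the direction of exactness right (cofiber of $\tau$ on the left versus on the right). I expect this to be routine.
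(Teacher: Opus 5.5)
Your proof is correct and is essentially the paper's argument with the ghost property of $2\cdot\widetilde\beta_2$ (Proposition~\ref{prop:two-primary-complete}) unpacked: the paper writes $i_*(2x)=(2\cdot\widetilde\beta_2)_*(y)=0$, and that vanishing comes from exactly your case split on whether the weight $b-1$ of $y$ or the weight $b$ of $i_*(x)$ is odd, via Lemma~\ref{lem:odd-weight-torsion}. Your inlined version also records the slightly sharper fact that $2x=0$ itself when $b$ is even, while for $b$ odd the hypothesis $\tau\cdot x=0$ is not needed.
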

\begin{proof}
Choose $y\in\pi_{a+1,b-1}C\tau$ with $q_*(y)=x$. Proposition~\ref{prop:two-primary-complete} gives
\[
 i_*(2\cdot x)=(2\cdot\widetilde\beta_2)_*(y)=0.
\]
Exactness shows that $2\cdot x$ is $\tau$-divisible.
\end{proof}

Propositions~\ref{prop:ambient} and~\ref{prop:two-primary-complete} use finitely many cells over the completed sphere. We next construct finite cellular examples in the uncompleted category.

\section{Four-cell integral counterexamples}\label{sec:complex}

Fix a prime $p$ and an integer $r\geq1$. We prove Theorem~\ref{thm:failure}\textup{(2)} and \textup{(3)} using $\tau$- and $p\tau$-Bocksteins on four-cell spectra over the uncompleted sphere.

\begin{lemma}\label{lem:descent}
Over any field $k$, objects built from $\one_k/p^r$ by bigraded shifts, finite cofibers, and retracts are $p$-complete. For finite cellular $T$, the ordinary smash product satisfies
\[
 \widehat\one_k\wedge T\simeq L_pT.
\]
\end{lemma}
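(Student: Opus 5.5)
The plan is to reduce everything to a single fact: $\one_k/p^r$ is $p$-complete, i.e.\ $L_p$-local. Local objects form a thick subcategory, since the class of $L_p$-local objects is closed under shifts, cofibers and retracts in the stable category. Recall that an object $W$ is $L_{\one_k/p}$-local exactly when $[A,W]=0$ for every $\one_k/p$-acyclic $A$, meaning $A\wedge\one_k/p\simeq 0$. Bigraded shifts of local objects are local, because $\Sigma^{a,b}A$ is acyclic whenever $A$ is. The five lemma applied to long exact sequences of $[A,-]$ shows that cofibers of maps between local objects are local. Retracts are handled the same way. So the first assertion reduces to the generator $\one_k/p^r$.

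To see that $\one_k/p^r$ is local, take an acyclic $A$. Then $A\wedge\one_k/p\simeq0$. The octahedral axiom applied to $p^r=p\cdot p^{r-1}$ gives cofiber sequences $\one_k/p^{r-1}\to\one_k/p^r\to\one_k/p$. Inducting on $r$ shows $A\wedge\one_k/p^r\simeq0$, so $p^r$ acts invertibly on $A$, and hence $p$ does too. On the other hand $p^r\cdot\id_{\one_k/p^r}$ is zero up to the usual issue at $p=2$; to be safe, I would use that $p^{2r}$ (or some power of $p$) annihilates $\id_{\one_k/p^r}$. Then $[A,\one_k/p^r]$ is a module on which a power of $p$ acts both invertibly (through $A$) and as zero (through the target), so it vanishes. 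This is the step needing the most care: we need a power of $p$ that kills the identity of the Moore object. The standard argument is that $p^r$ kills $\one_k/p^r$ after applying $[\one_k/p^r,-]$ up to an extension, so $p^{2r}\cdot\id=0$. That suffices.

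For the second assertion, consider the natural map $T\to\widehat\one_k\wedge T$ induced by $\one_k\to\widehat\one_k$, and show that it is an $L_p$-localization. First, it is a $\one_k/p$-equivalence. After smashing with $\one_k/p$, the map becomes $(\one_k\to\widehat\one_k)\wedge\one_k/p\wedge T$, and $\one_k\to\widehat\one_k$ is a $\one_k/p$-equivalence by definition. Second, the target is local. The subcategory of finite cellular $T$ for which $\widehat\one_k\wedge T$ is local is thick, since $\widehat\one_k\wedge-$ is exact and local objects form a thick subcategory. It contains the spheres $\Sigma^{a,b}\one_k$, because $\widehat\one_k$ is local and shifts preserve locality. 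Since finite cellular $T$ lie in the thick subcategory generated by the spheres, this covers every such $T$. A local $\one_k/p$-equivalence target identifies $\widehat\one_k\wedge T\simeq L_pT$ by the universal property of Bousfield localization.
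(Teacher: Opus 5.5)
Your proof is correct and follows essentially the same route as the paper. Both arguments run as follows: $p$ acts invertibly on any $\one_k/p$-acyclic $A$, and a power of $p$ annihilates $\one_k/p^r$ (your bound $p^{2r}\cdot\id=0$ is right), so $[A,\one_k/p^r]=0$. Closure of local objects under shifts, cofibers, and retracts then gives the first assertion and the locality of $\widehat\one_k\wedge T$, and the unit map $T\to\widehat\one_k\wedge T$ is a mod-$p$ equivalence with local target, hence it is $L_pT$.
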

\begin{proof}
The Moore sequence and orthogonality to spectra on which $p$ is invertible show that $\one_k/p^r$ and every spectrum killed by a power of $p$ are complete. Closure under shifts, finite cofibers, and retracts proves the first assertion and completeness of $\widehat\one_k\wedge T$ for finite cellular $T$. The natural mod-$p$ equivalence $T\to\widehat\one_k\wedge T$ is therefore $p$-completion.
\end{proof}

\begin{definition}\label{def:finite-tau-bockstein}
Define $Y=\one_{\C}/(p^r,\tau)$ by the cofiber sequence
\begin{equation}\label{eq:complex-integral-triangle}
 \Sigma^{0,-1}(\one_{\C}/p^r)\xrightarrow{\tau}\one_{\C}/p^r \xrightarrow{i}Y\xrightarrow{q}\Sigma^{1,-1}(\one_{\C}/p^r).
\end{equation}
Its \emph{$\tau$-Bockstein} is the composite
\begin{equation}\label{eq:complex-integral-map}
 \beta_{p^r}: Y\xrightarrow{q}\Sigma^{1,-1}(\one_{\C}/p^r) \xrightarrow{\Sigma^{1,-1}i}\Sigma^{1,-1}Y.
\end{equation}
\end{definition}
By Lemma~\ref{lem:descent}, smashing the completed $\tau$-map with $\one_{\C}/p^r$ induces $\tau$ above. This gives $Y\simeq C\tau\wedge(\one_{\C}/p^r)\simeq C\tau/p^r$ and $\beta_{p^r}=\widetilde\beta_p\wedge\id_{\one_{\C}/p^r}$. Thus $Y$ is a $p$-complete $C\tau$-module with four cells over the uncompleted sphere, in bidegrees $(0,0),(1,0),(1,-1),(2,-1)$.

\begin{proposition}\label{prop:complex-quotient}
At every prime $p$, with the natural $C\tau$-module structures, we have
\begin{equation}\label{eq:complex-quotient-maps}
 [Y,\Sigma^{1,-1}Y]\cong\Z/p^r\{\beta_{p^r}\},\qquad [Y,\Sigma^{1,-1}Y]_{C\tau}=0.
\end{equation}
For odd $p$, the map $\beta_{p^r}$ is a non-$C\tau$-linear ghost of exact additive order $p^r$ on the four-cell spectrum $Y$ over the uncompleted sphere.
\end{proposition}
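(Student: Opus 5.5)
Since $Y\simeq C\tau/p^r$ and $\beta_{p^r}=\widetilde\beta_p\wedge\id_{\one_{\C}/p^r}$, the plan is to repeat the proof of Proposition~\ref{prop:ambient} with coefficients $\Z/p^r$. The input is the mod-$p^r$ analog of \eqref{eq:isaksen},
\[
 \pi_{a,b}(C\tau/p^r)\cong\Ext^{2b-a,2b}_{BP_*BP}(BP_*,BP_*/p^r).
\]
This follows from the universal coefficient sequence for the Moore spectrum together with the case $B=\Z/p^r$ of \eqref{eq:ext-bound}. Alternatively, one can use the long exact sequence of $p^r$ on $\pi_{*,*}C\tau$ and the corresponding sequence of Ext groups: multiplication by $p^r$ is compatible, and the normalized cobar complex is degreewise free. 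In either form, $\pi_{a,b}(C\tau/p^r)$ vanishes unless $0\le b\le a\le 2b$ and $b\equiv0\pmod{p-1}$. The first consequence is that $\pi_{0,0}(C\tau/p^r)=\Z/p^r$, since only $\Ext^{0,0}=\Z/p^r$ survives in internal degree zero.

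For the mapping group, I would apply $[-,\Sigma^{1,-1}Y]$ to \eqref{eq:complex-integral-triangle}. This gives an exact sequence
\[
 [\Sigma^{0,0}(\one/p^r),\Sigma^{1,-1}Y]\to[\Sigma^{1,-1}\one/p^r,\Sigma^{1,-1}Y]\xrightarrow{q^*}[Y,\Sigma^{1,-1}Y]\to[\one/p^r,\Sigma^{1,-1}Y].
\]
Maps out of $\one/p^r$ are computed by the Moore sequence. $D(\one/p^r)\simeq\Sigma^{-1,0}\one/p^r$, so $[\Sigma^{a,b}\one/p^r,T]$ sits between $\pi_{a,b}T$ and $\pi_{a+1,b}T$. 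Concretely, I need $[\one/p^r,Y]\cong\pi_{0,0}Y=\Z/p^r$, since $\pi_{1,0}Y$ lies outside the region $b\le a\le 2b$. I also need the two outer groups, which reduce to $\pi_{-1,1},\pi_{0,1}$ and $\pi_{0,1},\pi_{-1,1}$ of $Y$, to vanish by the region $b\le a$. Then $q^*$ sends the identity class to $\beta_{p^r}$, which gives $[Y,\Sigma^{1,-1}Y]\cong\Z/p^r\{\beta_{p^r}\}$ with exact order $p^r$.

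For $C\tau$-linear maps, $Y=C\tau\wedge\one/p^r$ is a free module on $\one/p^r$. The free--forgetful adjunction gives $[Y,\Sigma^{1,-1}Y]_{C\tau}\cong[\one/p^r,\Sigma^{1,-1}Y]$, which is built from $\pi_{-1,1}Y$ and $\pi_{0,1}Y$. Both vanish by the same region, so this group is zero. The ghost property for odd $p$ is then immediate: $(\beta_{p^r})_*:\pi_{a,b}Y\to\pi_{a-1,b+1}Y$ shifts the weight by one, while nonzero groups occur only in weights divisible by $p-1\ge2$. Non-linearity follows because $\beta_{p^r}\neq0$ while the linear group is zero.

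I expect the main obstacle to be keeping the bidegree bookkeeping for the Moore spectrum duality correct, so that each group in the exact sequences really lands outside the vanishing region. It also needs care to justify the mod-$p^r$ Ext identification, in particular the vanishing for $B=\Z/p^r$ from \eqref{eq:ext-bound}. Once these are in place, the argument is the same as in Proposition~\ref{prop:ambient}.
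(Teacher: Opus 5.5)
\textbf{Gap at $p=2$: injectivity of $q^*$.} Your bidegree bookkeeping goes wrong at the point you flagged, and it matters at $p=2$. Apply $[-,\Sigma^{1,-1}Y]$ to \eqref{eq:complex-integral-triangle}. The term preceding $[\Sigma^{1,-1}(\one_{\C}/p^r),\Sigma^{1,-1}Y]$ is $[\Sigma^{1,0}(\one_{\C}/p^r),\Sigma^{1,-1}Y]$, the suspension of the target $\one_{\C}/p^r$ of $\tau$. It is not $[\one_{\C}/p^r,\Sigma^{1,-1}Y]$.

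By your own Moore-spectrum rule, this group is an extension of $(\pi_{0,1}Y)[p^r]=0$ by $(\pi_{1,1}Y)/p^r$. At odd primes $\pi_{1,1}Y=0$ because the weight $1$ is not divisible by $p-1$. So your argument survives there, though for a reason other than $b\le a$. At $p=2$, however, \eqref{eq:moore-exact} gives $\pi_{1,1}Y\cong(\pi_{1,1}C\tau)/2^r\cong\Z/2$, generated by $\eta$. So this outer group is $\Z/2\neq0$, and your justification for the injectivity of $q^*$ fails. Without injectivity you only learn that $[Y,\Sigma^{1,-1}Y]$ is a cyclic quotient of $\Z/2^r$ generated by $\beta_{2^r}$. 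That loses the exact order $2^r$, which the statement asserts at every prime and Proposition~\ref{prop:two-primary-finite} relies on.

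\textbf{How to repair it.} The map out of the offending group is precomposition with $\pm\Sigma^{1,0}\tau$. By naturality of $\tau\wedge(-)$, it equals postcomposition with $\tau$ on the $C\tau$-module $T=\Sigma^{1,-1}Y$. Writing $\mu$ for the action map and $i$ for the unit of $C\tau$, we have $\tau\wedge\id_T=\mu\circ((i\tau)\wedge\id_T)=0$. Hence $q^*$ is injective after all.

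Alternatively, use the two cofiber sequences in the paper's order. First, the Moore sequence $C\tau\xrightarrow{p^r}C\tau\xrightarrow{j}Y$ gives $j^*:[Y,\Sigma^{1,-1}Y]\cong[C\tau,\Sigma^{1,-1}Y]$. Here the left-hand group $[\Sigma^{1,0}C\tau,\Sigma^{1,-1}Y]$ is built from $\pi_{1,0}Y$ and $\pi_{0,1}Y$. Then the $\tau$-sequence identifies $[C\tau,\Sigma^{1,-1}Y]$ with $\pi_{0,0}Y=\Z/p^r$. In that order, only $\pi_{1,0}Y=\pi_{0,1}Y=\pi_{-1,1}Y=0$ and $\pi_{0,0}Y=\Z/p^r$ enter, and all of these follow from \eqref{eq:moore-exact}.

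\textbf{Remaining points.} The facts from \eqref{eq:moore-exact} are also all you actually use from your mod-$p^r$ Ext identification. Its universal-coefficient justification yields only a short exact sequence, not a natural isomorphism. The rest of your proposal is correct and agrees in substance with the paper: the module group via free--forgetful adjunction, the ghost property at odd primes, and non-linearity.
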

\begin{proof}
The Moore cofiber sequence $C\tau\xrightarrow{p^r}C\tau\xrightarrow{j}Y$ gives
\begin{equation}\label{eq:moore-exact}
 0\longrightarrow(\pi_{a,b}C\tau)/p^r \longrightarrow\pi_{a,b}Y \longrightarrow(\pi_{a-1,b}C\tau)[p^r]\longrightarrow0,
\end{equation}
where $G[p^r]=\ker(p^r:G\to G)$. At odd primes, the weight support of $C\tau$ implies
\begin{equation}\label{eq:quotient-sparseness}
 \pi_{a,b}Y=0\qquad\text{unless }b\equiv0\pmod{p-1}.
\end{equation}
Since $\beta_{p^r}$ changes weight by one, it is a ghost.

For the mapping group, \eqref{eq:ext-bound} and \eqref{eq:moore-exact}, together with $\pi_{0,0}C\tau=\Z_p$, give at every prime
\begin{equation}\label{eq:complex-low}
 \pi_{0,0}Y=\Z/p^r,\qquad \pi_{1,0}Y=\pi_{0,1}Y=\pi_{-1,1}Y=0.
\end{equation}
The $\tau$- and Moore cofiber sequences then give
\[
 [Y,\Sigma^{1,-1}Y]\xrightarrow[\cong]{j^*} [C\tau,\Sigma^{1,-1}Y]\cong\Z/p^r.
\]
Under these isomorphisms, $\beta_{p^r}$ corresponds to $1$, since $\beta_{p^r}j=(\Sigma^{1,-1}j)\widetilde\beta_p$. Lemma~\ref{lem:descent} identifies these with maps in the uncompleted category.

For the module mapping group, applying $[-,\Sigma^{1,-1}Y]_{C\tau}$ to $C\tau\xrightarrow{p^r}C\tau\to Y$ gives outer groups $\pi_{0,1}Y$ and $\pi_{-1,1}Y$. Both vanish by \eqref{eq:complex-low}, proving $[Y,\Sigma^{1,-1}Y]_{C\tau}=0$.
\end{proof}

Figure~\ref{fig:complex-cells} depicts the map $\beta_{p^r}$.

\begin{figure}[htbp]
\centering
\begin{tikzpicture}[x=1cm,y=.85cm]
 \node[motivic cell] (c00) at (0,0) {$(0,0)$};
 \node[motivic cell] (c10) at (0,1) {$(1,0)$};
 \node[motivic cell] (c01) at (0,2.2) {$(1,-1)$};
 \node[motivic cell] (c11) at (0,3.2) {$(2,-1)$};
 \node[motivic cell] (d00) at (5,2.2) {$(1,-1)$};
 \node[motivic cell] (d10) at (5,3.2) {$(2,-1)$};
 \node[motivic cell] (d01) at (5,4.4) {$(2,-2)$};
 \node[motivic cell] (d11) at (5,5.4) {$(3,-2)$};
 \draw (c00)--node[cell label,right] {$p^r$}(c10);
 \draw (c01)--node[cell label,right] {$p^r$}(c11);
 \draw (d00)--node[cell label,left] {$p^r$}(d10);
 \draw (d01)--node[cell label,left] {$p^r$}(d11);
 \draw (c11.west) to[out=180,in=180,looseness=1.15]
   node[cell label,left] {$\tau$} (c10.west);
 \draw (c01.east) to[out=0,in=0,looseness=1.15]
   node[cell label,right] {$\tau$} (c00.east);
 \draw (d11.west) to[out=180,in=180,looseness=1.15]
   node[cell label,left] {$\tau$} (d10.west);
 \draw (d01.east) to[out=0,in=0,looseness=1.15]
   node[cell label,right] {$\tau$} (d00.east);
 \draw[cell map] (c01)--node[cell label,above] {$1$}(d00);
 \draw[cell map] (c11)--node[cell label,above] {$1$}(d10);
 \node at (0,-.75) {$Y$};
 \node at (5,1.45) {$\Sigma^{1,-1}Y$};
\end{tikzpicture}
\caption{The odd-primary ghost $\beta_{p^r}$ of Proposition~\ref{prop:complex-quotient}, where $p$ is odd and $r\geq1$. Here $Y=\one_{\C}/(p^r,\tau)$ has four cells over the uncompleted motivic sphere. The arrows identify the top Moore quotient with the shifted bottom Moore subcomplex.}
\label{fig:complex-cells}
\end{figure}

\Needspace{9\baselineskip}
\begin{proposition}[The prime two]\label{prop:two-primary-finite}
Suppose $p=2$ and $r\geq2$. The map $2\cdot\beta_{2^r}:Y\to\Sigma^{1,-1}Y$ is a nonzero ghost of exact additive order $2^{r-1}$. For the natural $C\tau$-module structures, it is not $C\tau$-linear.
\end{proposition}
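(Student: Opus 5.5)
The plan is to transfer the mapping-group computation of Proposition~\ref{prop:complex-quotient}, which is stated and proved at every prime, and to replace the odd-primary weight argument by the torsion statement of Lemma~\ref{lem:odd-weight-torsion}, as in Proposition~\ref{prop:two-primary-complete}. The first paragraph of that proof (the Moore sequence \eqref{eq:moore-exact}, the low-degree computations \eqref{eq:complex-low}, and the isomorphisms $j^*$) holds for $p=2$. It gives $[Y,\Sigma^{1,-1}Y]\cong\Z/2^r\{\beta_{2^r}\}$ and $[Y,\Sigma^{1,-1}Y]_{C\tau}=0$. Hence $2\cdot\beta_{2^r}$ corresponds to $2\in\Z/2^r$. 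For $r\geq2$ this element is nonzero of exact additive order $2^{r-1}$. Because the module mapping group vanishes, no nonzero map $Y\to\Sigma^{1,-1}Y$ is $C\tau$-linear, and in particular $2\cdot\beta_{2^r}$ is not.

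The remaining point is the ghost condition. Since $\beta_{2^r}=\widetilde\beta_2\wedge\id_{\one_{\C}/2^r}$, I will use the same parity argument as in Proposition~\ref{prop:two-primary-complete}, now applied to $Y=C\tau/2^r$. The key step is to show that $2\cdot\pi_{a,b}Y=0$ whenever $b$ is odd. By \eqref{eq:moore-exact}, $\pi_{a,b}Y$ is an extension of $(\pi_{a-1,b}C\tau)[2^r]$ by $(\pi_{a,b}C\tau)/2^r$. By \eqref{eq:isaksen} and Lemma~\ref{lem:odd-weight-torsion}, both of these groups are killed by $2$ when $b$ is odd. That only gives $4\cdot\pi_{a,b}Y=0$, so the extension has to be handled.

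This extension is the step I expect to be the main obstacle. The cleanest way around it is to argue on the cobar level instead of through the extension. The operation $\sigma$ from the proof of Lemma~\ref{lem:odd-weight-torsion} is naturally equivalent to the identity as a Hopf algebroid map. It therefore acts as the identity on $\Ext_{BP_*BP}(BP_*,BP_*/2^r)$ and as $(-1)^{t/2}$ on the cobar complex of $BP_*/2^r$, which kills $2\cdot\Ext^{s,4k+2}(BP_*,BP_*/2^r)$. I would then identify $\pi_{a,b}(C\tau/2^r)$ with $\Ext^{2b-a,2b}_{BP_*BP}(BP_*,BP_*/2^r)$. This follows from the GWX identification of $C\tau$-modules, under which $C\tau/2^r$ corresponds to the comodule $BP_*/2^r$, together with \eqref{eq:ext-bound} for $B=\Z/2^r$. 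This yields $2\cdot\pi_{a,b}Y=0$ for $b$ odd.

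If that identification is awkward, a fallback is to factor the map directly. On $\pi_{a,b}$, $2\beta_{2^r}$ is the composite of $2$ on the source with $\beta$, and also of $\beta$ with $2$ on the target. Exactly one of $b$ and $b+1$ is odd. Using the Moore filtration, one can then check that each graded piece is killed, so the composite with one factor of $2$ on an odd-weight group of exponent $2$ on graded pieces vanishes. One would still need to confirm that $\beta$ respects the Moore filtration, which it does since $\beta_{2^r}=\widetilde\beta_2\wedge\id$.

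Once $2$ annihilates $\pi_{*,b}Y$ for odd $b$, the conclusion is immediate. The map $(2\beta_{2^r})_*:\pi_{a,b}Y\to\pi_{a-1,b+1}Y$ equals both $2\cdot(\beta_{2^r})_*$ and $(\beta_{2^r})_*\circ 2$, and one of the two weights is odd, so it vanishes. Finally, Lemma~\ref{lem:descent} transfers everything to the uncompleted category, where $Y$ is the four-cell spectrum.
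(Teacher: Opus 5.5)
Your proposal is correct and has the same overall structure as the paper's proof. The mapping-group and module-mapping-group computations of Proposition~\ref{prop:complex-quotient} carry over to $p=2$. The ghost property then follows from $2\cdot\pi_{a,b}Y=0$ for odd $b$, using the identification $\pi_{a,b}(C\tau/2^r)\cong\Ext^{2b-a,2b}_{BP_*BP}(BP_*,BP_*/2^r)$ and the weight shift of $\beta_{2^r}$.

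The difference is how you obtain $2\cdot\Ext^{s,4k+2}_{BP_*BP}(BP_*,BP_*/2^r)=0$, that is, how you resolve the extension problem you correctly flagged in \eqref{eq:moore-exact}. The paper keeps Lemma~\ref{lem:odd-weight-torsion} integral and works at the cochain level. In each internal degree the cobar complex is finite free over $\Z_{(2)}$, so the algebraic universal coefficient sequence
\[
 0\to\Ext^{s,t}_{BP_*BP}(BP_*,BP_*)/2^r\to\Ext^{s,t}_{BP_*BP}(BP_*,BP_*/2^r)\to\Ext^{s+1,t}_{BP_*BP}(BP_*,BP_*)[2^r]\to0
\]
splits noncanonically. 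Both outer terms are killed by $2$ when $t\equiv2\pmod 4$.

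You instead rerun the $\sigma$ argument with $\Z/2^r$ coefficients. This is valid because $(2^r)$ is invariant and $\sigma$-stable, so $\sigma$ and the equivalence $h_F$ descend to the quotient Hopf algebroid $(BP_*/2^r,BP_*BP/2^r)$. Its cobar complex is the cobar complex of $BP_*/2^r$, and $\sigma$ still acts on it by the sign $(-1)^{t/2}$. The paper's route is shorter given the integral lemma and needs only a formal splitting over a PID. Yours avoids the splitting and applies to any invariant $\sigma$-stable quotient, at the cost of checking this descent.

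Your fallback, however, would not close the argument. On the bottom Moore piece $j_*(\pi_{a,b}C\tau)$, the map $\beta_{2^r}$ acts through $(\widetilde\beta_2)_*$, since $\beta_{2^r}j=(\Sigma^{1,-1}j)\widetilde\beta_2$. That map need not vanish, because only $2\cdot\widetilde\beta_2$ is known to be a ghost. So knowing that the graded pieces have exponent $2$ only shows that $4\cdot\beta_{2^r}$ is a ghost, not $2\cdot\beta_{2^r}$. Some cochain-level input, either the splitting or your mod-$2^r$ version of the lemma, is genuinely needed.
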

\begin{proof}
For any prime $p$ and $r\geq1$, the exact comparison of Gheorghe--Wang--Xu~\cite[Theorem~1.1]{GWX} sends $C\tau/p^r$ to $BP_*/p^r$, since multiplication by $p^r$ is injective on the completed comodule unit $BP_*\otimes\Z_p$. Free-module adjunction and the grading in \eqref{eq:isaksen} give the natural isomorphism
\begin{equation}\label{eq:coefficient-comparison}
 \pi_{a,b}(C\tau/p^r)\cong \Ext^{2b-a,2b}_{BP_*BP}(BP_*,BP_*/p^r).
\end{equation}
Here completed and uncompleted coefficient Ext groups agree by the cobar calculation in Section~\ref{sec:complete}. These groups vanish unless $0\leq b\leq a\leq2b$, by \eqref{eq:ext-bound}.

At $p=2$, the integral cobar complex is degreewise finite free over $\Z_{(2)}$, so its universal coefficient sequence splits noncanonically in each internal degree. Lemma~\ref{lem:odd-weight-torsion} gives
\begin{equation}\label{eq:coefficient-bounds}
 2\cdot\Ext^{s,2w}_{BP_*BP}(BP_*,BP_*/2^r)=0 \qquad(w\text{ odd}).
\end{equation}
Thus $2\cdot\pi_{a,b}Y=0$ for odd $b$. Since $\beta_{2^r}$ changes weight by one, $2\cdot\beta_{2^r}$ is a ghost.

For the remaining assertions, Proposition~\ref{prop:complex-quotient} gives $\beta_{2^r}$ exact additive order $2^r$ and $[Y,\Sigma^{1,-1}Y]_{C\tau}=0$. Hence $2\cdot\beta_{2^r}$ has exact additive order $2^{r-1}$ and is not $C\tau$-linear.
\end{proof}

\Needspace{6\baselineskip}
\begin{proof}[Proof of Theorem~\ref{thm:failure}\textup{(2)}]
For odd $p$, take $Y=\one_{\C}/(p^r,\tau)$ and $\beta=\beta_{p^r}$, as in Proposition~\ref{prop:complex-quotient}. At $p=2$, take $Y=\one_{\C}/(2^{r+1},\tau)$ and $\beta=2\cdot\beta_{2^{r+1}}$, using Proposition~\ref{prop:two-primary-finite}. In either case, $Y$ has four cells over the uncompleted sphere and $\beta$ is a ghost of exact additive order $p^r$. For the natural $C\tau$-module structures, $[Y,\Sigma^{1,-1}Y]_{C\tau}=0$ by Proposition~\ref{prop:complex-quotient}.
\end{proof}

\FloatBarrier

\subsection{A ghost on the cofiber of \texorpdfstring{$p\tau$}{p tau}}\label{subsec:p-tau}

Let $p$ be any prime. Replacing the attachment $\tau$ by $p\tau$ gives a cofiber whose completed Betti realization is $S/p$. Comparing the two cofiber sequences will produce a ghost on this spectrum.

\begin{definition}\label{def:completed-p-tau-bockstein}
Define $D=\widehat\one_{\C}/(p\tau)$ by the cofiber sequence
\begin{equation}\label{eq:p-tau-triangle}
 \Sigma^{0,-1}\widehat\one_{\C} \xrightarrow{p\tau}\widehat\one_{\C} \xrightarrow{i_D}D\xrightarrow{q_D} \Sigma^{1,-1}\widehat\one_{\C}.
\end{equation}
Its \emph{$p\tau$-Bockstein} is $\beta_D=(\Sigma^{1,-1}i_D)q_D$.
\end{definition}
The spectrum $D$ has two cells over the completed sphere.

\begin{proposition}\label{prop:p-tau}
There is an isomorphism
\[
 [D,\Sigma^{1,-1}D]\cong\Z_p\{\beta_D\}.
\]
The map $p^3\cdot\beta_D$ is a nonzero ghost of infinite additive order. Moreover,
\[
 \operatorname{Re}_p(D)\simeq S/p\not\simeq *, \qquad \operatorname{Re}_p(p^3\cdot\beta_D)=0.
\]
In particular, $D$ admits no $C\tau$-module structure.
\end{proposition}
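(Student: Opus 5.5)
The proof proceeds in four steps: compute the mapping group from the cofiber sequence \eqref{eq:p-tau-triangle}, prove the ghost property by comparing the $p\tau$- and $\tau$-cofiber sequences, and deduce the realization claims from the same factorization.

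\emph{The mapping group.} Apply $[-,\Sigma^{1,-1}D]$ to \eqref{eq:p-tau-triangle}. This gives the exact sequence
\[
 \pi_{0,1}D\xrightarrow{(p\tau)^*}\pi_{0,0}D\xrightarrow{q_D^*}[D,\Sigma^{1,-1}D]\longrightarrow\pi_{-1,1}D .
\]
The outer terms vanish, and $\pi_{0,0}D=\Z_p$, by the long exact sequence of \eqref{eq:p-tau-triangle} together with the connectivity of the completed sphere. That connectivity gives $\pi_{a,b}\widehat\one_{\C}=0$ for $a<0$, and $\pi_{0,b}\widehat\one_{\C}=0$ for $b>0$, with $\pi_{0,0}\widehat\one_{\C}=\Z_p$. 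Concretely:
\begin{itemize}
\item $\pi_{0,1}D$ is squeezed between $\pi_{0,1}\widehat\one_{\C}=0$ and the kernel of $p\tau$ on $\pi_{-1,2}\widehat\one_{\C}=0$;
\item $\pi_{-1,1}D$ is squeezed between $\pi_{-1,1}\widehat\one_{\C}=0$ and $\pi_{-2,2}\widehat\one_{\C}=0$;
\item $\pi_{0,0}D$ receives $\pi_{0,0}\widehat\one_{\C}=\Z_p$ isomorphically, because the image of $p\tau$ from $\pi_{0,1}$ is zero and $\pi_{-1,1}\widehat\one_{\C}=0$.
\end{itemize}
As in Proposition~\ref{prop:ambient}, $q_D^*$ carries the unit to $\beta_D$. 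Hence $[D,\Sigma^{1,-1}D]\cong\Z_p\{\beta_D\}$, so every nonzero multiple of $\beta_D$, in particular $p^3\cdot\beta_D$, is nonzero of infinite additive order.

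\emph{The ghost property.} Write $\tau\circ p=p\tau=p\circ\tau$. Completing commutative squares gives maps of cofiber sequences
\[
 \varphi:D\to C\tau\quad(\text{using }p\text{ on the source copy of }\widehat\one_{\C}),\qquad \psi:C\tau\to D\quad(\text{using }p\text{ on the target copy}).
\]
They satisfy
\[
 \varphi i_D=i,\qquad q\varphi=\Sigma^{1,-1}p\circ q_D,\qquad \psi i=i_D\circ p,\qquad q_D\psi=q.
\]
Then
\[
 (\Sigma^{1,-1}\psi)\,\widetilde\beta_p\,\varphi=\Sigma^{1,-1}(\psi i)\,q\varphi=\Sigma^{1,-1}(i_Dp)\,\Sigma^{1,-1}p\,q_D=p^2\beta_D .
\]
Therefore $p^3\beta_D=(\Sigma^{1,-1}\psi)(p\,\widetilde\beta_p)\varphi$ factors through $p\cdot\widetilde\beta_p$. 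The latter is a ghost at every prime: at odd $p$ because $\widetilde\beta_p$ itself is a ghost (Proposition~\ref{prop:ambient}), and at $p=2$ by Proposition~\ref{prop:two-primary-complete}. A composite containing a ghost is a ghost, so $p^3\beta_D$ is a ghost.

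\emph{Realization and the module claim.} The $p$-completed Betti realization $\operatorname{Re}_p$ is exact and symmetric monoidal with $\operatorname{Re}_p(\tau)=\id$. Hence
\[
 \operatorname{Re}_p(p\tau)=p,\qquad \operatorname{Re}_p(D)\simeq S/p\not\simeq*,\qquad \operatorname{Re}_p(C\tau)\simeq*.
\]
Since $p^3\beta_D$ factors through $C\tau$, its realization is zero. If $D$ admitted a $C\tau$-module structure, $\operatorname{Re}_p(D)$ would be a module over $\operatorname{Re}_p(C\tau)\simeq*$, hence contractible, a contradiction.

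The step I expect to need the most care is the precise low-degree vanishing of $\pi_{*,*}\widehat\one_{\C}$ in the mapping-group computation, in particular $\pi_{0,b}\widehat\one_{\C}=0$ for $b>0$. I would deduce this from the $\tau$-cofiber sequence and \eqref{eq:isaksen}: $\pi_{a,b}C\tau=0$ outside $0\leq b\leq a\leq 2b$, and $\widehat\one_{\C}$ is $\tau$-complete. The remaining point is the bookkeeping of the commuting squares defining $\varphi$ and $\psi$, including their bigraded shifts.
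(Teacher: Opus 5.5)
Your proposal is correct and follows the paper's proof: the same exact-sequence computation $[D,\Sigma^{1,-1}D]\cong\pi_{0,0}D=\Z_p$, and the same factorization $p^3\beta_D=(\Sigma^{1,-1}\psi)(p\,\widetilde\beta_p)\varphi$ through the ghost $p\,\widetilde\beta_p$. Only the vanishing $\pi_{a,b}\widehat\one_{\C}=0$ for $a<b$ is actually used, which the paper takes from Morel connectivity and Hu--Kriz--Ormsby, so your $\tau$-completeness detour is unnecessary; the one real variation is that you kill $\operatorname{Re}_p(p^3\beta_D)$ by factoring it through $\operatorname{Re}_p(C\tau)\simeq *$, whereas the paper identifies $\operatorname{Re}_p(\beta_D)$ with the Moore Bockstein $(\Sigma i_p)q_p$ on $S/p$, already annihilated by $p$, and both arguments are valid.
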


\begin{proof}
The two commutative squares
\[
\begin{tikzcd}[column sep=2.4em,row sep=1.8em]
 \Sigma^{0,-1}\widehat\one_{\C}
   \arrow[r,"p\tau"]\arrow[d,"p"']
 &\widehat\one_{\C}\arrow[d,equal]\\
 \Sigma^{0,-1}\widehat\one_{\C}\arrow[r,"\tau"']
 &\widehat\one_{\C}
\end{tikzcd}
\qquad
\begin{tikzcd}[column sep=2.4em,row sep=1.8em]
 \Sigma^{0,-1}\widehat\one_{\C}
   \arrow[r,"\tau"]\arrow[d,equal]
 &\widehat\one_{\C}\arrow[d,"p"]\\
 \Sigma^{0,-1}\widehat\one_{\C}\arrow[r,"p\tau"']
 &\widehat\one_{\C}
\end{tikzcd}
\]
induce maps $c:D\to C\tau$ and $j:C\tau\to D$ satisfying
\[
 ci_D=i,\qquad qc=p\cdot q_D,\qquad ji=p\cdot i_D,\qquad q_Dj=q.
\]
Consequently,
\begin{equation}\label{eq:p-tau-factorization}
 (\Sigma^{1,-1}j)(p\cdot\widetilde\beta_p)c =p^3\cdot\beta_D.
\end{equation}
The middle map is a ghost by Proposition~\ref{prop:ambient} at odd primes and Proposition~\ref{prop:two-primary-complete} at $p=2$. Since ghosts are closed under pre- and postcomposition, $p^3\cdot\beta_D$ is a ghost.

Motivic connectivity and the completion comparison \cite[Theorem~1 and Lemma~11]{HKO} give $\pi_{a,b}\widehat\one_{\C}=0$ for $a<b$. The $\tau$-cofiber sequence therefore identifies $\pi_{0,0}\widehat\one_{\C}$ with $\pi_{0,0}C\tau=\Z_p$. The $p\tau$-cofiber sequence now gives
\[
 \pi_{0,0}D=\Z_p,\qquad \pi_{0,1}D=\pi_{-1,1}D=0.
\]
Applying $[-,\Sigma^{1,-1}D]$ to the $p\tau$-cofiber sequence gives
\[
 [D,\Sigma^{1,-1}D]\cong\pi_{0,0}D=\Z_p, \qquad \beta_D\longleftrightarrow1.
\]
Hence $p^3\cdot\beta_D$ is nonzero and has infinite additive order.

Let $i_p:S\to S/p$ and $q_p:S/p\to\Sigma S$ be the maps in the classical Moore cofiber sequence. Completed Betti realization sends $D$ to $S/p$ and $\beta_D$ to the Moore Bockstein $(\Sigma i_p)q_p$. Since $p\cdot i_p=0$, this Bockstein is annihilated by $p$, so $\operatorname{Re}_p(p^3\cdot\beta_D)=0$. Every $C\tau$-module has contractible completed Betti realization, so $D$ admits no such module structure.
\end{proof}

To obtain a finite spectrum over the uncompleted sphere, we pass from $D$ to $D/p^4$. The choice of $p^4$ leaves the multiple $p^3$ detectable in $\Z/p^4$.

\begin{definition}\label{def:finite-p-tau-bockstein}
Define
\[
 Z=\cofib\bigl(p\tau:\Sigma^{0,-1}(\one_{\C}/p^4) \to\one_{\C}/p^4\bigr) =\one_{\C}/(p^4,p\tau), \qquad \gamma=p^3\cdot\beta_Z,
\]
where $\beta_Z$ is the projection to the top Moore quotient followed by the shifted inclusion of the bottom Moore subcomplex.
\end{definition}
Lemma~\ref{lem:descent} defines this $\tau$-action on $\one_{\C}/p^4$ and identifies $Z$ with $D/p^4$.

\begin{corollary}\label{cor:p-tau-finite}
The map $\gamma:Z\to\Sigma^{1,-1}Z$ is a nonzero ghost on a four-cell motivic spectrum over the uncompleted sphere. Its Betti realization satisfies
\[
 \operatorname{Re}(Z)\simeq(S/p^4)\wedge(S/p)\not\simeq *, \qquad \operatorname{Re}(\gamma)=0.
\]
The spectrum $Z$ has no nonzero retract on which a power of $\tau$ acts trivially. In particular, it admits no $C\tau$-module structure.
\end{corollary}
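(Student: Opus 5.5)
The strategy is to obtain everything for $Z$ by smashing the completed results for $D$ with $\one_{\C}/p^4$. By Lemma~\ref{lem:descent}, $Z\simeq D\wedge(\one_{\C}/p^4)$ is $p$-complete, so maps into $Z$ and its shifts may be computed in the completed category. Smashing the maps $c$ and $j$ from the proof of Proposition~\ref{prop:p-tau} with $\one_{\C}/p^4$ gives $c':Z\to C\tau/p^4$ and $j':C\tau/p^4\to Z$. The relations listed there persist, so \eqref{eq:p-tau-factorization} becomes
\[
 \gamma=p^3\cdot\beta_Z=(\Sigma^{1,-1}j')\,(p\cdot\beta_{p^4})\,c'.
\]
Here $\beta_{p^4}=\widetilde\beta_p\wedge\id$ is the $\tau$-Bockstein on $Y=C\tau/p^4$.

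\textbf{Ghost property.} For odd $p$, $\beta_{p^4}$ is already a ghost by Proposition~\ref{prop:complex-quotient}. For $p=2$, $2\cdot\beta_{2^4}$ is a ghost by Proposition~\ref{prop:two-primary-finite}. Since ghosts form an ideal, $\gamma$ is a ghost. The four-cell count is immediate from the definition: two Moore spectra attached along $p\tau$.

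\textbf{Nonvanishing.} This step needs a short computation. I would apply $[-,\Sigma^{1,-1}Z]$ to the defining triangle of $Z$. This gives
\[
 \pi_{0,1}Z\to[\one_{\C}/p^4,\Sigma^{1,-1}Z]\to[Z,\Sigma^{1,-1}Z]\to\cdots,
\]
with $[\one_{\C}/p^4,\Sigma^{1,-1}Z]$ receiving $\beta_Z$ as the restriction. Using $\pi_{a,b}\widehat\one_{\C}=0$ for $a<b$ and $\pi_{0,0}D=\Z_p$, the Moore sequence gives $\pi_{0,0}Z=\Z/p^4$ and $\pi_{0,1}Z=\pi_{-1,1}Z=0$. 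Together with the vanishing of the relevant groups in bidegrees $(\cdot,1)$, this identifies $[Z,\Sigma^{1,-1}Z]$ with $\pi_{0,0}(Z)$, that is, with $\Z/p^4\{\beta_Z\}$. This is the same argument as \eqref{eq:complex-low} with $D$ in place of $C\tau$. Hence $p^3\beta_Z\neq0$. I expect this to be the main bookkeeping obstacle: one has to check that the extra Moore cell introduces no nonzero outer terms, in particular that $\pi_{1,1}$ and $\pi_{0,1}$ of $D/p^4$ vanish. These follow from the connectivity $a\ge b$ in the relevant groups.

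\textbf{Betti realization.} The realization $\operatorname{Re}(Z)=\cofib(p:S/p^4\to S/p^4)\simeq(S/p^4)\wedge(S/p)$ is nonzero, since its mod $p$ homology is nonzero. Realization sends $\beta_Z$ to $\id_{S/p^4}$ smashed with the Moore Bockstein of $S/p$. This Bockstein is killed by $p$, so $\operatorname{Re}(\gamma)=p^3\cdot(\ldots)=0$.

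\textbf{Retracts.} Suppose $R$ is a retract of $Z$ on which $\tau^m$ acts trivially. Then $\operatorname{Re}(R)$ is a retract of $\operatorname{Re}(Z)$ on which the realization of $\tau$, an equivalence, acts as zero, so $\operatorname{Re}(R)\simeq *$. Next, $\tau^m$ acts on $Z$ through the idempotent $e$ cutting out $R$, and $eZ=R$. On $\pi_{*,*}$ this means $R$'s homotopy is $\tau$-power torsion. On the other hand, $\pi_{*,*}Z[\tau^{-1}]\cong\pi_*\operatorname{Re}(Z)\otimes\Z_p[\tau^{\pm1}]$ is finite of the appropriate rank, and the retraction splits it.

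It remains to show that a finite cellular $p$-complete spectrum with contractible realization and which is a retract of $Z$ must vanish. For this I would use the $\pi_{0,0}$ computation above. The endomorphism ring of $Z$ in low degree is a quotient of $\Z/p^4$, which is local. Hence the only idempotents are $0$ and $1$, so the retract $R$ is $0$ or $Z$, and $Z$ is excluded because $\operatorname{Re}(Z)\neq*$.

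Concretely, I would show that $[Z,Z]\to\End(\pi_{0,0}Z)$ has nilpotent kernel, using a cell filtration argument, which makes $[Z,Z]$ local. This is the step I am least sure of, and I would try it first. Finally, a $C\tau$-module structure would make $\tau$ act trivially on $Z$ itself. That is excluded, since $\operatorname{Re}(Z)\neq*$.
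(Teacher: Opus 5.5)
Your ghost factorization $\gamma=(\Sigma^{1,-1}j')(p\cdot\beta_{p^4})c'$ and your Betti-realization argument agree with the paper. So does your final deduction that $Z$ admits no $C\tau$-module structure: $\tau$ acts by zero on a $C\tau$-module, so its completed realization is contractible. The genuine gap is the retract assertion. Your argument rests on $\End^{0,0}(Z)$ being local, which you do not prove. Your description of this ring as a quotient of $\Z/p^4$ is also false. Already at odd $p$ one has $\pi_{1,-1}Z\cong\Z/p$, because $\tau$ becomes $p$-torsion in $\pi_{0,-1}D$. This class gives a nonzero endomorphism $Z\xrightarrow{q}\Sigma^{1,-1}(\one_{\C}/p^4)\to Z$ that kills $\pi_{0,0}Z$. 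At $p=2$ the $\eta$-family adds further classes. Betti realization cannot carry the argument either: $\operatorname{Re}(K)\simeq *$ does not force $K\simeq *$, as $C\tau/p$ shows.

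The missing idea is homological, as in the paper. The attaching maps $p^4$ and $p\tau$ become null after smashing with $\mathrm{H}\F_p$, so the mod-$p$ motivic homology of $Z$ is free over $\F_p[\tau]$. If $K$ is a retract with $\tau^m\cdot\id_K=0$, its mod-$p$ homology is $\tau$-power torsion and also a direct summand of a $\tau$-torsion-free module, hence zero. Since $Z$ is finite and $p$-complete, the Hu--Kriz--Ormsby theorem makes $K$ $\mathrm{H}\F_p$-complete, so $K\simeq *$. No knowledge of $[Z,Z]$ is needed.

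Your nonvanishing step also contains an error at $p=2$, although the conclusion survives. The identification $[Z,\Sigma^{1,-1}Z]\cong\Z/p^4\{\beta_Z\}$ is false there. In the long exact sequence of the defining triangle, $\beta_Z$ restricts to zero along $\one_{\C}/p^4\to Z$. It is instead pulled back along $q$ from $[\one_{\C}/p^4,Z]$, which has subgroup $\pi_{1,0}Z/p^4$ and quotient $\pi_{0,0}Z[p^4]$. Connectivity says nothing on the line $a=b$, and at $p=2$ one has $\pi_{1,1}Z\cong\Z/2\{\eta\}$ and $\pi_{1,0}Z\cong\Z/2\{\tau\eta\}$. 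The class $\tau\eta$ gives an extra map from the top cell $(2,-1)$ of $Z$ to the bottom cell of $\Sigma^{1,-1}Z$, and the group has order $32$.

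Argue instead as the paper does and restrict along the Moore quotient $a:D\to Z$. Applying $[-,\Sigma^{1,-1}Z]$ to the two-cell $p\tau$-triangle for $D$ involves only $\pi_{0,1}Z$, $\pi_{0,0}Z$, and $\pi_{-1,1}Z$. This gives $[D,\Sigma^{1,-1}Z]\cong\Z/p^4$ with $\beta_Za\mapsto 1$, hence $\gamma a\mapsto p^3\neq 0$. At odd $p$ your identification is correct, since $\pi_{1,*}\widehat\one_{\C}=0$ there.
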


\begin{figure}[htbp]
\centering
\begin{tikzpicture}[x=1cm,y=.85cm]
 \node[motivic cell] (c00) at (0,0) {$(0,0)$};
 \node[motivic cell] (c10) at (0,1) {$(1,0)$};
 \node[motivic cell] (c01) at (0,2.2) {$(1,-1)$};
 \node[motivic cell] (c11) at (0,3.2) {$(2,-1)$};
 \node[motivic cell] (d00) at (5,2.2) {$(1,-1)$};
 \node[motivic cell] (d10) at (5,3.2) {$(2,-1)$};
 \node[motivic cell] (d01) at (5,4.4) {$(2,-2)$};
 \node[motivic cell] (d11) at (5,5.4) {$(3,-2)$};
 \draw (c00)--node[cell label,right] {$p^4$}(c10);
 \draw (c01)--node[cell label,right] {$p^4$}(c11);
 \draw (d00)--node[cell label,left] {$p^4$}(d10);
 \draw (d01)--node[cell label,left] {$p^4$}(d11);
 \draw (c11.west) to[out=180,in=180,looseness=1.15]
   node[cell label,left] {$p\tau$} (c10.west);
 \draw (c01.east) to[out=0,in=0,looseness=1.15]
   node[cell label,right] {$p\tau$} (c00.east);
 \draw (d11.west) to[out=180,in=180,looseness=1.15]
   node[cell label,left] {$p\tau$} (d10.west);
 \draw (d01.east) to[out=0,in=0,looseness=1.15]
   node[cell label,right] {$p\tau$} (d00.east);
 \draw[cell map] (c01)--node[cell label,above] {$p^3$}(d00);
 \draw[cell map] (c11)--node[cell label,above] {$p^3$}(d10);
 \node at (0,-.75) {$Z$};
 \node at (5,1.45) {$\Sigma^{1,-1}Z$};
\end{tikzpicture}
\caption{The ghost $\gamma=p^3\cdot\beta_Z$ of Corollary~\ref{cor:p-tau-finite} on $Z=\one_{\C}/(p^4,p\tau)$. All cells are suspensions of the uncompleted motivic sphere. The $p\tau$-lines collectively record the attachment on $\one_{\C}/p^4$. The two arrows give multiplication by $p^3$ from the top Moore quotient to the shifted bottom Moore subcomplex.}
\label{fig:p-tau-cells}
\end{figure}

\begin{proof}
The cells of $Z$, displayed in Figure~\ref{fig:p-tau-cells}, have bidegrees $(0,0),(1,0),(1,-1),(2,-1)$, and $Z$ is $p$-complete by Lemma~\ref{lem:descent}. Replacing the completed sphere in the two comparison squares by $\one_{\C}/p^4$ gives maps $c:Z\to C\tau/p^4$ and $j:C\tau/p^4\to Z$, with
\[
 (\Sigma^{1,-1}j)(p\cdot\beta_{p^4})c =p^3\cdot\beta_Z=\gamma.
\]
Propositions~\ref{prop:complex-quotient} and \ref{prop:two-primary-finite} show that the middle map is a ghost at every prime.

To check that reduction modulo $p^4$ has not killed the map, we restrict along the quotient $a:D\to Z$. The same vanishing argument, applied to the Moore and $p\tau$-cofiber sequences, gives $\pi_{0,0}Z=\Z/p^4$ and $\pi_{0,1}Z=\pi_{-1,1}Z=0$. Applying $[-,\Sigma^{1,-1}Z]$ to \eqref{eq:p-tau-triangle} gives
\[
 [D,\Sigma^{1,-1}Z]\cong\Z/p^4, \qquad \beta_Z a\longleftrightarrow1.
\]
Thus $\gamma a$ corresponds to $p^3\ne0$ in $\Z/p^4$.

The Betti realization of $Z$ is built from $S/p^4$ and is therefore already $p$-complete. The attachment $p\tau$ becomes multiplication by $p$ on $S/p^4$, and the Bockstein becomes $\id_{S/p^4}\wedge((\Sigma i_p)q_p)$. Since $p\cdot i_p=0$, these observations prove the realization claims.

For the assertion about retracts, observe that the attachments $p^4$ and $p\tau$ become null after smashing with the mod-$p$ motivic Eilenberg--Mac Lane spectrum $\mathrm{H}\F_p$. Thus the mod-$p$ motivic homology of $Z$ is free over $\F_p[\tau]$. If $K$ is a retract of $Z$ killed by a power of $\tau$, its mod-$p$ motivic homology is both $\tau$-torsion and a direct summand of a $\tau$-torsion-free module, hence zero. Since $Z$ is finite and $p$-complete, the theorem of Hu--Kriz--Ormsby~\cite[Theorem~1]{HKO} implies that $Z$ and its retracts are $\mathrm{H}\F_p$-complete. Thus $K\simeq *$.
\end{proof}

This proves Theorem~\ref{thm:failure}(3) at every prime.

\begin{remark}\label{rem:p-tau-specialization}
Extension of scalars $C\tau\wedge_{\widehat\one_{\C}}(-)$ from $\SH(\C)_{\cell,p}^{\wedge}$ to $C\tau$-modules need not preserve ghosts. In $C\tau$-modules, the $p\tau$-cofiber sequence splits, giving
\[
 C\tau\wedge Z\simeq C\tau/p^4 \vee\Sigma^{1,-1}(C\tau/p^4).
\]
Under a compatible splitting, $C\tau\wedge\gamma$ carries the top summand to the shifted bottom summand by multiplication by $p^3$. The class $1\in\pi_{0,0}(C\tau/p^4)=\Z/p^4$ gives a spherical class in the top summand whose image is $p^3\ne0$. Thus the specialized map is not a ghost. The original ghost factors through $C\tau/p^4$ and becomes null on the generic fiber, although its source has noncontractible generic fiber and no nonzero retract on which a power of $\tau$ acts trivially.
\end{remark}

\FloatBarrier

\section{The algebraic generating hypothesis}\label{sec:proof}

We use formal multiplication to construct algebraic ghosts. After giving a finite coefficient example and computing its endomorphism ring, we prove Theorem~\ref{thm:algebraic-intro}\textup{(1)} using projective-space comodules in Section~\ref{subsec:projective-spaces}. We then extend the construction to local categories in Section~\ref{subsec:local-algebraic} and prove Theorem~\ref{thm:algebraic-intro}\textup{(2)}.

\subsection{Conventions}\label{subsec:categories}

Fix a prime $p$. We use even graded left $BP_*BP$-comodules, with
\[
 |v_i|=|t_i|=2(p^i-1),\qquad (\Sigma^tL)_n=L_{n-t}.
\]
We use cochain indexing for complexes, with $(K[r])^n=K^{n+r}$ and differential $(-1)^r\partial_K$. For a comodule $L$ and an even integer $t$, our convention is
\[
 \Ext^{s,t}_{BP_*BP}(BP_*,L) =\Hom_{\D(BP_*BP)}(\Sigma^tBP_*,L[s]).
\]
Tensor products in the derived category are derived over $BP_*$. Bounded complexes whose terms are flat over $BP_*$ compute them by ordinary tensor products.

We call an object finite if it belongs to $\D_{\mathrm{fin}}(BP_*BP)=\Thick\{(\Sigma^{2b}BP_*)[a]:a,b\in\Z\}$. This subcategory identifies with the compact subcategory of the even graded part of Hovey's stable category, which is obtained by taking the Ind-completion of the thick subcategory of dualizable comodules inside the derived category \cite[Definition~3.1 and Propositions~3.2--3.3]{BH}. The stable-to-derived comparison is fully faithful on bounded objects \cite[Proposition~3.6 and Lemma~3.16]{BH}. Odd internal shifts of the unit have no maps to even objects, so our examples remain ghosts in the full graded stable category. The same argument applies to $E_*E$-comodules and, by localization adjunction, to the local examples.

\subsection{Ghosts from formal multiplication}\label{subsec:strict-operation}

We first construct natural automorphisms of coefficient comodules and show that their differences from the identity are ghosts. Let $E$ be an even, $p$-local Landweber exact theory with a $p$-typical orientation. We use the ordinary flat Hopf algebroid \cite[Introduction and Corollary~2.3]{HS}
\[
 (E_*,E_*E),\qquad E_*E\cong E_*\otimes_{BP_*}BP_*BP\otimes_{BP_*}E_*.
\]
Let $F$ be the formal group law of the chosen orientation. Recall that $BP_*$ classifies $p$-typical formal group laws and $BP_*BP$ classifies strict isomorphisms between them, that is, isomorphisms with linear term $x$ \cite[Theorems~4.1.19 and~A2.1.27]{Ravenel}. By the displayed base change, $E_*E$ classifies strict isomorphisms between pullbacks of $F$ along two coefficient maps from $E_*$. The two unit maps $\eta_L$ and $\eta_R$ record the source and target laws. We use the Araki generators $v_i$ and the standard $t_i$-coordinates~\cite[Lemma~A2.1.26 and equation~(A2.2.2)]{Ravenel}. We also write $v_i$ for their images in $E_*$, and $\Delta$ for the coproduct on $E_*E$. The definitions of internal shifts, Ext groups, and ghosts are the same as for $BP_*BP$. In this subsection, comodules may have arbitrary internal degrees.

Fix $r\geq1$. The ideal $(p^r)$ is invariant, and $(E_*/p^r,E_*E/p^r)$ is a flat quotient Hopf algebroid.
\begin{definition}\label{def:formal-multiplication-operation}
For an integer $u\equiv1\pmod{p^r}$, formal multiplication $[u]_F$ is a strict automorphism of the formal group over $E_*/p^r$. Let
\[
 \theta_u:E_*E/p^r\longrightarrow E_*/p^r
\]
classify this automorphism. For a graded $E_*E/p^r$-comodule $L$ with coaction
\[
 \psi_L:L\longrightarrow (E_*E/p^r)\otimes_{E_*/p^r}L,
\]
define $\Theta_u$ by the composite
\begin{equation}\label{eq:Theta}
 \begin{tikzcd}[column sep=large]
 L \arrow[r,"\psi_L"] &
 (E_*E/p^r)\otimes_{E_*/p^r}L
 \arrow[r,"\theta_u\otimes1"] &
 (E_*/p^r)\otimes_{E_*/p^r}L
 \arrow[r,"\cong"] & L.
 \end{tikzcd}
\end{equation}
\end{definition}

\begin{lemma}\label{lem:principal-unit-automorphism}
For every integer $u\equiv1\pmod{p^r}$, $\Theta_u$ is a natural automorphism of the identity functor on graded $E_*E/p^r$-comodules. It is the identity on every internal shift of $E_*/p^r$. For integers $u,v\equiv1\pmod{p^r}$, we have $\Theta_u\Theta_v=\Theta_{uv}$.
\end{lemma}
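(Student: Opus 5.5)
The argument is a direct translation of the Hopf algebroid axioms into the language of strict automorphisms. Throughout I write $\Gamma=E_*E/p^r$ and $A=E_*/p^r$.

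\textbf{Step 1: the character $\theta_u$.} First I would check that $[u]_F$ defines a Hopf algebroid character, meaning a ring map $\theta_u:\Gamma\to A$ with
\[
\theta_u\eta_L=\theta_u\eta_R=\id_A.
\]
The map $[u]_F(x)=ux+\cdots$ is a strict automorphism of $F$ over $A$ precisely because $u\equiv1\pmod{p^r}$. It is an endomorphism of $F$, so both its source and its target are $F$; this gives the two unit conditions. The homomorphism $\theta_u$ is therefore the classifying map of a strict isomorphism $F\to F$ over $A$. Since $[u]_F$ is homogeneous of degree zero when $x$ has degree $-2$, the map $\theta_u$ is graded. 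Consequently $\Theta_u$ in \eqref{eq:Theta} is a well-defined degree-preserving $A$-linear map. The $A$-linearity uses $\theta_u\eta_L=\id$, together with the left linearity of $\psi_L$.

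\textbf{Step 2: multiplicativity, invertibility, and comodule maps.} Composition of strict isomorphisms corresponds to the coproduct $\Delta$. Since
\[
[u]_F\circ[v]_F=[uv]_F,
\]
we have $(\theta_u\otimes\theta_v)\Delta=\theta_{uv}$, up to the order convention, which is immaterial here because the two automorphisms commute. Coassociativity of $\psi_L$ then gives $\Theta_u\Theta_v=\Theta_{uv}$. The identity corresponds to the counit $\epsilon$, so $\Theta_1=\id$ by the counit axiom. Choosing $v$ with $uv\equiv1\pmod{p^{r+N}}$ is not obviously possible within integers, so instead I would note that $u$ is invertible in $\Z_p$. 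Alternatively, I would use the conjugation $c$: the character $\theta_u c$ classifies the inverse $[u]_F^{-1}$. Then $\Theta$ for this character inverts $\Theta_u$ by the same coproduct argument, so $\Theta_u$ is an automorphism.

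Naturality in comodule maps $g:L\to L'$ is immediate, because $\psi_{L'}g=(1\otimes g)\psi_L$. The step I expect to be the main obstacle is showing that $\Theta_u$ is itself a \emph{comodule} map. This requires the identity
\[
\psi_L\Theta_u=(1\otimes\Theta_u)\psi_L.
\]
Expanding both sides with coassociativity reduces it to the equality of two maps $\Gamma\to\Gamma$:
\[
(\theta_u\otimes1)\Delta=(1\otimes\theta_u)\Delta.
\]
This is the assertion that $[u]_F$ commutes with every strict isomorphism, and I would verify it on the universal example $\Gamma$. Given the universal strict isomorphism $\phi:\eta_L^*F\to\eta_R^*F$, the required identity is
\[
\phi\circ[u]_{\eta_L^*F}=[u]_{\eta_R^*F}\circ\phi.
\]
This holds because $\phi$ is a homomorphism of formal groups, and every homomorphism commutes with $[u]$, since $[u]$ is defined through the group law. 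The care needed here is in matching the composition convention for $\Delta$ with the order of the two characters.

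\textbf{Step 3: identity on shifts of $A$.} On $\Sigma^tA$ the coaction is $\eta_R$, suitably shifted. Hence $\Theta_u(a)=\theta_u\eta_R(a)=a$ by Step 1, so $\Theta_u$ is the identity there.
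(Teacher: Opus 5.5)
Your proposal is correct and follows essentially the paper's route: the unit identities $\theta_u\eta_L=\theta_u\eta_R=\id$, and the centrality identity $(\theta_u\otimes1)\Delta=(1\otimes\theta_u)\Delta$ obtained from $h\circ[u]_{\eta_L^*F}=[u]_{\eta_R^*F}\circ h$ for the universal strict isomorphism $h$, with coassociativity then giving the comodule-map property; the composition law and inverse come from $[u]_F\circ[v]_F=[uv]_F$ and $[u]_F^{-1}$, and your conjugation argument is the paper's inverse argument. One harmless convention slip: with the left-linearity $\psi_L(a\ell)=\eta_L(a)\psi_L(\ell)$, the coaction on $\Sigma^t(E_*/p^r)$ is $a\mapsto\eta_L(a)\otimes1$, so Step~3 really uses $\theta_u\eta_L=\id$ rather than $\theta_u\eta_R=\id$; since you established both identities in Step~1, the conclusion is unaffected.
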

\begin{proof}
Since $\theta_u$ classifies $[u]_F:F\to F$ over $E_*/p^r$, composing either unit map with $\theta_u$ gives the identity. Over $E_*E/p^r$, let $h:\eta_L^*F\to\eta_R^*F$ be the universal strict isomorphism. As a formal-group homomorphism, it satisfies $h\circ[u]_{\eta_L^*F}=[u]_{\eta_R^*F}\circ h$. The coproduct classifies composition, so these two composites are classified by $(\theta_u\otimes1)\Delta$ and $(1\otimes\theta_u)\Delta$, respectively. Hence
\begin{equation}\label{eq:Theta-centrality}
 (\theta_u\otimes1)\Delta=(1\otimes\theta_u)\Delta.
\end{equation}
The identity $\theta_u\eta_R=\id$ makes $\theta_u\otimes1$ well-defined on the tensor product in \eqref{eq:Theta}: it sends multiplication by $\eta_R(a)$ on the first factor to multiplication by $a$ on $L$. For $a\in E_*/p^r$ and $\ell\in L$, the coaction satisfies $\psi_L(a\ell)=\eta_L(a)\psi_L(\ell)$. Since $\theta_u\eta_L=\id$, we obtain
\[
 \Theta_u(a\ell) =(\theta_u\otimes1)\bigl(\eta_L(a)\psi_L(\ell)\bigr) =a\Theta_u(\ell).
\]
Thus $\Theta_u$ is $E_*/p^r$-linear. To show that it is a comodule map, we must prove $\psi_L\Theta_u=(1\otimes\Theta_u)\psi_L$. The following diagram expands the maps involved. All tensor products are over $E_*/p^r$, with canonical unit identifications understood.
\[
 \begin{tikzcd}[column sep=4em,row sep=2em]
 L \arrow[r,"\psi_L"] \arrow[dd,"\psi_L"']
 \arrow[rr,bend left=22,"\Theta_u"] &
 (E_*E/p^r)\otimes L \arrow[r,"\theta_u\otimes1"]
 \arrow[d,shift right=1.1ex,"\Delta\otimes1"']
 \arrow[d,shift left=1.1ex,"1\otimes\psi_L"] &
 L \arrow[dd,"\psi_L"] \\
 & (E_*E/p^r)\otimes(E_*E/p^r)\otimes L
 \arrow[dr,"\theta_u\otimes1\otimes1"] & \\
 (E_*E/p^r)\otimes L
 \arrow[rr,bend right=22,"1\otimes\Theta_u"']
 \arrow[r,shift left=1.1ex,"\Delta\otimes1"]
 \arrow[r,shift right=1.1ex,"1\otimes\psi_L"'] &
 (E_*E/p^r)\otimes(E_*E/p^r)\otimes L
 \arrow[r,"1\otimes\theta_u\otimes1"'] &
 (E_*E/p^r)\otimes L.
 \end{tikzcd}
\]
Coassociativity identifies each pair of parallel arrows after precomposition with $\psi_L$. Equation~\eqref{eq:Theta-centrality} then allows us to apply $\theta_u$ to either of the two cooperation factors after $\Delta$. Explicitly,
\[
 \begin{aligned}
 \psi_L\Theta_u
 &=\psi_L(\theta_u\otimes1)\psi_L
 &&\text{by \eqref{eq:Theta}}\\
 &=(\theta_u\otimes1\otimes1)(1\otimes\psi_L)\psi_L
 &&\text{by linearity of $\psi_L$}\\
 &=(\theta_u\otimes1\otimes1)(\Delta\otimes1)\psi_L
 &&\text{by coassociativity}\\
 &=(1\otimes\theta_u\otimes1)(\Delta\otimes1)\psi_L
 &&\text{by \eqref{eq:Theta-centrality}}\\
 &=(1\otimes\theta_u\otimes1)(1\otimes\psi_L)\psi_L
 &&\text{by coassociativity}\\
 &=(1\otimes\Theta_u)\psi_L
 &&\text{by \eqref{eq:Theta}.}
 \end{aligned}
\]
Thus $\Theta_u$ is a comodule map.

For a comodule map $g:L\to L'$, applying $\theta_u\otimes1$ to $\psi_{L'}g=(1\otimes g)\psi_L$ gives $\Theta_u g=g\Theta_u$. This proves naturality. The composition law follows from $[u]_F\circ[v]_F=[uv]_F$, and the inverse strict isomorphism $[u]_F^{-1}$ supplies the inverse operation. The unit identities also show that $\Theta_u$ acts as the identity on every internal shift of $E_*/p^r$.
\end{proof}

\begin{proposition}[Ghosts from formal multiplication]\label{prop:principal-unit-ext}
For every graded $E_*E/p^r$-comodule $L$ and every integer $u\equiv1\pmod{p^r}$, the operation $\Theta_u:L\to L$ induces the identity on $\Ext^{s,t}_{E_*E}(E_*,L)$ for all $s\geq0$ and $t\in\Z$. Consequently, $1-\Theta_u:L\to L$ is a ghost in $\D(E_*E)$.
\end{proposition}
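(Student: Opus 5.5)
Since $\Theta_u$ is a natural automorphism of the identity functor on $E_*E/p^r$-comodules (Lemma~\ref{lem:principal-unit-automorphism}), I will show that it acts trivially on Ext by computing Ext with a functorial resolution and seeing that $\Theta_u$ acts trivially on the cochains that compute it. First, change of rings identifies $\Ext^{s,t}_{E_*E}(E_*,L)$ for an $E_*E/p^r$-comodule $L$ with the cohomology of the cobar complex $C^*_{E_*E}(E_*,L)$, that is, $C^s=\overline{E_*E}^{\otimes s}\otimes_{E_*}L$. Since $L$ is $p^r$-torsion, this is the same as the cobar complex over $(E_*/p^r,E_*E/p^r)$. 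This uses flatness of $E_*E$, which is the Landweber exact hypothesis. The cobar complex is $\Hom_{E_*E}(E_*,-)$ applied to the relative injective resolution $L\to E_*E\otimes\overline{E_*E}^{\otimes\bullet}\otimes L$ by extended comodules. I then compare the map that $\Theta_u$ induces on this resolution with the identity.

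The key step is that $\Theta_u$ induces a cochain map on the cobar resolution, acting on each extended comodule $E_*E\otimes N$ by $\Theta_u$, and that on $\Hom_{E_*E}(E_*,E_*E\otimes N)\cong N$ this becomes the identity. The reason is that a primitive of an extended comodule corresponds to the element $1\otimes n$. Explicitly, $\Theta_u$ on $E_*E\otimes N$ is $(\theta_u\otimes 1)\Delta\otimes 1$ and sends $1\otimes n$ to $\theta_u(1)\otimes n=1\otimes n$. More cleanly, $\Theta_u$ fixes every primitive: if $\psi_L(x)=1\otimes x$, then $\Theta_u(x)=\theta_u(1)x=x$. So $\Theta_u$ is the identity on $\Hom_{E_*E}(\Sigma^tE_*,-)$ for every comodule. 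Naturality of $\Theta_u$ makes it commute with every map in the (functorial) cobar resolution. Hence the induced endomorphism of the complex $\Hom_{E_*E}(\Sigma^tE_*,I^\bullet)$ is the identity, and so is the induced map on cohomology $\Ext^{s,t}$, for all $s,t$.

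The main obstacle is ensuring that this chain-level identity really computes the map $\Theta_u$ induces on $\Ext$ in $\D(E_*E)$ rather than in the relative setting. Flatness of $E_*E$ makes extended comodules $\Hom(E_*,-)$-acyclic, so the cobar resolution computes derived Hom. A map of comodules induces on Ext the map obtained from any lift to resolutions, and the functorial lift is $\Theta_u$ applied termwise. Internal degree shifts cause no trouble, since $\Theta_u$ preserves degree and commutes with $\Sigma^t$.

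Finally, $1-\Theta_u$ induces $0$ on $\Hom(\Sigma^tE_*,L[s])=\Ext^{s,t}$ for all $s\ge0$. The groups with $s<0$ vanish, since $L$ is a comodule concentrated in complex degree $0$, so $1-\Theta_u$ satisfies \eqref{eq:ghost-definition} and is a ghost.
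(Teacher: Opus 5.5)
Your proposal is correct and is essentially the paper's argument: both rest on naturality of $\Theta_u$ together with $\Theta_u=\id$ on shifts of the unit, and your observation that $\Theta_u$ fixes every primitive is exactly the naturality square for maps $\Sigma^tE_*\to L$. The only difference is that you pass to $\Ext$ at the chain level through the cobar resolution, whose terms are $p^r$-torsion. The paper instead extends $\Theta_u$ termwise to a natural automorphism of the identity on $\D(E_*E/p^r)$ and applies naturality directly to every map $\Sigma^t(E_*/p^r)\to L[s]$.
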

\begin{proof}
Ravenel's coefficient change-of-rings comparison \cite[Proposition~A1.2.16(b)]{Ravenel} gives natural isomorphisms
\[
 \Ext^{s,t}_{E_*E}(E_*,L) \cong\Ext^{s,t}_{E_*E/p^r}(E_*/p^r,L).
\]
By Lemma~\ref{lem:principal-unit-automorphism}, $\Theta_u$ acts termwise on complexes and descends to a natural automorphism of the identity functor on $\D(E_*E/p^r)$, acting as the identity on every internal shift of $E_*/p^r$. Thus for any map $g:\Sigma^t(E_*/p^r)\to L[s]$, naturality gives the commutative square
\[
 \begin{tikzcd}[column sep=huge,row sep=large]
 \Sigma^t(E_*/p^r) \arrow[r,"g"]
 \arrow[d,"{\Theta_u=\id}"'] &
 L[s] \arrow[d,"{\Theta_u}"] \\
 \Sigma^t(E_*/p^r) \arrow[r,"g"'] & L[s].
 \end{tikzcd}
\]
Every Ext class in the comparison above is represented by such a map $g$. The square gives $\Theta_u\circ g=g$, so $\Theta_u$ induces the identity on these groups. Negative Ext groups from the unit to $L$ vanish since both lie in the heart, so $1-\Theta_u:L\to L$ is a ghost.
\end{proof}

This proves Proposition~\ref{prop:intro-criterion}. We now specialize to $E=BP$ and $r=1$, and construct a finite comodule on which $1-\Theta_{1+p}$ is nonzero.

\subsection{The finite coefficient comodule}
\label{subsec:coefficient-ghost}

Consider the free $BP_*/p$-module of rank two
\begin{equation}\label{eq:M}
 M=(BP_*/p)\{e_0,e_1\},\qquad |e_0|=0,\quad |e_1|=2p-2,
\end{equation}
with coaction
\begin{equation}\label{eq:M-coaction}
 \psi_M(e_0)=1\otimes e_0,\qquad \psi_M(e_1)=1\otimes e_1+t_1\otimes e_0.
\end{equation}
Since $\Delta(t_1)=t_1\otimes1+1\otimes t_1$ and $\epsilon(t_1)=0$, these formulas define a comodule structure.

To compute $\Theta_{1+p}$, recall that in characteristic $p$, the Araki generators satisfy $[p]_F(x)=v_1x^p+O(x^{p+1})$ \cite[equation~(A2.2.4)]{Ravenel}. Hence
\[
 \begin{aligned}
 [1+p]_F(x)&=F(x,[p]_F(x))=x+v_1x^p+O(x^{p+1}),\\
 [1+p]_F^{-1}(x)&=x-v_1x^p+O(x^{p+1}).
 \end{aligned}
\]
By the inverse-coordinate convention \cite[Lemma~A2.1.26]{Ravenel},
\begin{equation}\label{eq:theta-mod-p}
 \theta_{1+p}(t_1)=-v_1,\qquad \Theta_{1+p}(e_0)=e_0,\quad\Theta_{1+p}(e_1)=e_1-v_1e_0.
\end{equation}
Define
\begin{equation}\label{eq:coefficient-map}
 f=1-\Theta_{1+p}:M\longrightarrow M, \qquad f(e_0)=0,\quad f(e_1)=v_1e_0.
\end{equation}

\begin{mainthm}\label{thm:main}
The comodule $M$ belongs to $\D_{\mathrm{fin}}(BP_*BP)$ and can be built from four shifts of $BP_*$ by cofiber sequences. The map $f:M\to M$ is a nonzero ghost of exact additive order $p$, and
\[
 f^2=0,\qquad f^{\otimes^{\mathbb L}_{BP_*}r}\ne0 \quad(r\geq1).
\]
Each tensor power also has additive order $p$ and square zero under composition.
\end{mainthm}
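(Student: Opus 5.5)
The statement has four parts: finiteness, the properties of $f$, the tensor powers, and the ghost condition on those tensor powers. The first two are formal from Proposition~\ref{prop:principal-unit-ext} and the explicit formula \eqref{eq:coefficient-map}. The ghost condition on tensor powers is the real content.

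\emph{Finiteness.} The sub-$BP_*/p$-module spanned by $e_0$ is a subcomodule isomorphic to $BP_*/p$. The quotient is $\Sigma^{2p-2}BP_*/p$, spanned by $e_1$. This short exact sequence of comodules gives a triangle in $\D(BP_*BP)$. Each $\Sigma^{t}BP_*/p$ is the cofiber of $p:\Sigma^tBP_*\to\Sigma^tBP_*$, which is injective. Hence $M$ is built from the four shifts $BP_*$, $BP_*$, $\Sigma^{2p-2}BP_*$, $\Sigma^{2p-2}BP_*$ by cofiber sequences, so $M\in\D_{\mathrm{fin}}(BP_*BP)$.

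\emph{The map $f$.} Apply Proposition~\ref{prop:principal-unit-ext} with $E=BP$, $r=1$, $u=1+p$; this shows that $f=1-\Theta_{1+p}$ is a ghost. Since $M$ lies in the heart, $\Hom_{\D}(M,M)$ is the group of comodule endomorphisms of $M$. By \eqref{eq:coefficient-map}, $f(e_1)=v_1e_0\neq0$ in $BP_*/p$, so $f\ne0$. Since $p\cdot\id_M=0$, the order of $f$ is exactly $p$. Finally $f^2(e_1)=v_1f(e_0)=0$, so $f^2=0$.

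\emph{Tensor powers: order, square zero, nonvanishing.} Write $P=M^{\otimes^{\mathbb L}r}$. Multiplication by $p$ is $p\cdot\id_M\otimes\id$, so $p\cdot\id_P=0$. By functoriality of $\otimes^{\mathbb L}$, $(f^{\otimes r})^2=(f^2)^{\otimes r}=0$. For nonvanishing, apply the cohomology functor $H^0$, which is a functor on $\D(BP_*BP)$. All objects are connective in the cochain grading, so $H^0(P)$ is the underived tensor product $M\otimes_{BP_*}\cdots\otimes_{BP_*}M$. This is free over $BP_*/p$ on the tensors $e_{i_1}\otimes\cdots\otimes e_{i_r}$, and $H^0(f^{\otimes r})=f\otimes\cdots\otimes f$. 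It sends $e_1^{\otimes r}$ to $v_1^re_0^{\otimes r}\neq0$, so $f^{\otimes r}\neq0$. The same argument bounds the order: the order of $f^{\otimes r}$ divides $p$ and the map is nonzero, so the order is exactly $p$.

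\emph{Tensor powers are ghosts (main obstacle).} Ghosts form a two-sided ideal under composition, and
\[
 f^{\otimes r}=(f\otimes\id)\circ(\id\otimes f\otimes\id)\circ\cdots .
\]
So it suffices to show that $f\otimes\id_N$ is a ghost for every $N$ that is a tensor power of $M$. To use $\Theta$, I need a model of $M\otimes^{\mathbb L}N$ as a complex of $BP_*BP/p$-comodules. Resolve $BP_*/p$ by the two-term flat complex $BP_*\xrightarrow{p}BP_*$. Set $N'=\mathrm{cone}(p)\otimes_{BP_*}\widetilde N$ for a flat model $\widetilde N$ of $N$. Then $M\otimes^{\mathbb L}N\simeq M\otimes_{BP_*/p}(N'/p)$, where $N'/p$ is a complex of $BP_*BP/p$-comodules.

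The tensor-product coaction uses the multiplication in $BP_*BP/p$, and $\theta_{1+p}$ is a ring map. Hence $\Theta_{M\otimes N'}=\Theta_M\otimes\Theta_{N'}$, and I obtain
\[
 f\otimes1=(1-\Theta_{M\otimes N'})+(\Theta_M\otimes1)\circ\bigl(1\otimes(\Theta_{N'}-1)\bigr).
\]
The first summand is a ghost by Proposition~\ref{prop:principal-unit-ext}, applied termwise to complexes via naturality. In the second summand, $\Theta_{N'}-1$ is a ghost by the same proposition. Tensoring it with $\id_M$ keeps it of the form $1\otimes(\Theta-1)$, and I will rewrite this as $\Theta_{M\otimes N'}\circ(\Theta_M^{-1}\otimes1)-1$. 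Iterating the decomposition then exhibits it as a sum of composites, each containing a factor of the form $1-\Theta$ on some complex of $BP_*BP/p$-comodules. Checking that this rewriting terminates is the delicate point. If it fails, I would instead show directly that $\Theta_{1+p}$ on any complex of $BP_*BP/p$-comodules acts trivially on maps out of the units; this is again Proposition~\ref{prop:principal-unit-ext}. Then $f^{\otimes r}=(1-\Theta_M)^{\otimes r}$ expands as a polynomial in the commuting operators $\Theta$ on the factors with zero constant term.
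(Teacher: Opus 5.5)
The parts of your proposal that cover what the theorem actually asserts are correct and follow the paper's proof exactly:
\begin{itemize}
\item finiteness, via the extension $0\to BP_*/p\to M\to\Sigma^{2p-2}BP_*/p\to0$ and the Moore cofibers;
\item the properties of $f$: a ghost by Proposition~\ref{prop:principal-unit-ext}, nonzero since $f(e_1)=v_1e_0$, with $pf=0$ and $f^2=0$ already on $M$;
\item the tensor powers: order and square zero by functoriality, and nonvanishing via $H^0$ and $e_1^{\otimes r}\mapsto v_1^re_0^{\otimes r}$.
\end{itemize}
Your fourth part should be deleted. The theorem claims the ghost property only for $f$ itself, not for $f^{\otimes^{\mathbb L}_{BP_*}r}$. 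The tensor powers appear only to show that $f$ is not tensor nilpotent, so there is no ``main obstacle.''

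The route you sketch there is also wrong, not merely delicate. Ghosts are not a tensor ideal, and your reduction to ``$f\otimes\id_N$ is a ghost'' already fails for $N=M$.
\begin{itemize}
\item $M$ is dualizable, and under $\Hom(\one,M\otimes^{\mathbb L}DM)\cong\End(M)$ the map $f\otimes\id_{DM}$ is postcomposition with $f$. So it sends the coevaluation, which corresponds to $\id_M$, to $f\neq0$.
\item Write $M=BP_*/p\otimes_{BP_*}C$, where $C=BP_*(\cofib(\alpha_1))$ is the free two-cell comodule of Remark~\ref{rem:classical-realization}. Here $D(BP_*/p)$ is a shift of $BP_*/p$. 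Also $DC$ is a nonsplit extension of the same type as $C$, and such extensions are unique up to isomorphism. Hence $DM$ is a shift of $M$, so $f\otimes\id_M$ is not a ghost.
\end{itemize}

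Your rewriting is correspondingly circular. Using $\Theta_M^{-1}=1+f$, one gets $1\otimes(\Theta_{N'}-1)=(\Theta_{M\otimes N'}-1)+\Theta_{M\otimes N'}\circ(f\otimes1)$, which simply returns $f\otimes1$. The fallback fails for the same reason. Only the total operator $\Theta_M^{\otimes r}=\Theta_{M^{\otimes r}}$ is an instance of the natural transformation $\Theta$ and acts trivially on maps out of the unit. Partial operators do not: for example, $\Theta_M\otimes\id_{DM}=\id-f\otimes\id_{DM}$ acts nontrivially by the duality argument above. So expanding $\prod_i(1-\Theta_i)$ proves nothing.
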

Here $\otimes_{BP_*}^{\mathbb L}$ denotes the derived tensor product over $BP_*$. The map $f^{\otimes^{\mathbb L}_{BP_*}r}$ acts on $M^{\otimes^{\mathbb L}_{BP_*}r}$ by tensoring $r$ copies of $f$.

\begin{proof}
The inclusion and quotient give an exact sequence
\begin{equation}\label{eq:M-extension}
 0\longrightarrow BP_*/p\xrightarrow{1\mapsto e_0}M \xrightarrow{e_1\mapsto1}\Sigma^{2p-2}(BP_*/p)\longrightarrow0.
\end{equation}
Since $BP_*/p$ is the cofiber of multiplication by $p$ on $BP_*$, this constructs $M$ from four shifts of $BP_*$. Proposition~\ref{prop:principal-unit-ext} shows that $f$ is a ghost:
\begin{equation}\label{eq:ghost-calculation}
 \Ext^{s,t}_{BP_*BP}(BP_*,M) \xrightarrow{\ f_*=0\ }\Ext^{s,t}_{BP_*BP}(BP_*,M).
\end{equation}
Since $f(e_1)=v_1e_0\ne0$, it is nonzero as a comodule map and hence in the derived category. The relations $pf=0$ and $f^2=0$ hold already on $M$, so $f$ has exact additive order $p$.

For the tensor powers, cohomology in degree zero gives
\[
 H^0\bigl(M^{\otimes^{\mathbb L}_{BP_*}r}\bigr) \cong M^{\otimes_{BP_*}r}.
\]
This tensor product is free over $BP_*/p$ on the $2^r$ tensors in $e_0,e_1$. The induced map sends
\[
 e_1^{\otimes r}\longmapsto v_1^r e_0^{\otimes r}\ne0,
\]
since $BP_*/p=\F_p[v_1,v_2,\ldots]$. Thus the derived map is nonzero. The relations $p\cdot f=0$ and $f^2=0$ give the order and composition assertions for every tensor power.
\end{proof}

\begin{remark}\label{rem:classical-realization}
As a comodule, $M$ is also realized by the classical finite spectrum $S/(p,\alpha_1)$, where $S$ is the $p$-local sphere, $\alpha_1\in\pi_{2p-3}S$ is the first alpha-family element, and $S/(p,\alpha_1)=S/p\wedge\cofib(\alpha_1)$ \cite[Theorems~2.3.4 and~5.3.7]{Ravenel}. With compatible cell orientations, $BP_*(\cofib(\alpha_1))$ has coaction \eqref{eq:M-coaction} over $BP_*$. This $BP_*$-module is free, so smashing with $S/p$ reduces its homology modulo $p$. At $p=2$, the class $\alpha_1$ is $\eta$.
\end{remark}

\subsection{Endomorphisms}\label{sec:endomorphisms}

The map $f$ accounts for every degree-zero ghost endomorphism of $M$.

\begin{proposition}\label{prop:end-ring}
The endomorphism ring of $M$ in cohomological and internal degree zero is
\[
 \End^{0,0}_{\D(BP_*BP)}(M) \cong\F_p[\varepsilon]/(\varepsilon^2), \qquad f\longleftrightarrow\varepsilon.
\]
Its ideal of ghosts is $(\varepsilon)$.
\end{proposition}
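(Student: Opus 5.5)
Since $M$ is a comodule sitting in the heart, degree-zero endomorphisms in $\D(BP_*BP)$ are just comodule endomorphisms of $M$ of internal degree zero. So the plan is to compute $\Hom_{BP_*BP}(M,M)_0$ directly and then identify the ghost ideal.

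\textbf{Step 1: determine a general endomorphism.} An endomorphism $g$ is $BP_*/p$-linear and degree-preserving, so it is determined by $g(e_0)$ and $g(e_1)$. By degrees,
\[
 g(e_0)=a e_0,\qquad g(e_1)=b e_1+c v_1 e_0,\qquad a,b,c\in\F_p .
\]
Here $(BP_*/p)_0=\F_p$ and $(BP_*/p)_{2p-2}=\F_p\{v_1\}$.

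\textbf{Step 2: impose comodule compatibility.} The condition on $e_0$ is automatic. For $e_1$ we compare two expressions:
\[
 \psi(g e_1)=b(1\otimes e_1+t_1\otimes e_0)+c\,\eta_R(v_1)\otimes e_0,
\]
\[
 (1\otimes g)\psi(e_1)=1\otimes(be_1+cv_1e_0)+a\,t_1\otimes e_0 .
\]
Modulo $p$ we have $\eta_R(v_1)=v_1$ (with the sign convention $\eta_R(v_1)\equiv v_1+pt_1$). Rewriting $1\otimes v_1e_0=v_1\otimes e_0$, the condition reduces to $b=a$.

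Hence the endomorphism ring is $\{a\cdot\id+c f\}$, and $f^2=0$ from Theorem~\ref{thm:main} gives $\F_p[\varepsilon]/(\varepsilon^2)$ with $f\leftrightarrow\varepsilon$. The one place to be careful is this $\eta_R$ computation. It needs only the standard mod-$p$ formula, and it is consistent with $f$ itself being a comodule map, which Lemma~\ref{lem:principal-unit-automorphism} already guarantees.

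\textbf{Step 3: identify the ghost ideal.} Theorem~\ref{thm:main} shows that $f$ is a ghost. Ghosts form an ideal, so $(\varepsilon)$ consists of ghosts. Conversely, write $g=a+cf$ with $a\neq0$. Then $g$ induces multiplication by $a$ on $\Ext^{0,0}(BP_*,M)$, since $f$ induces zero there. This group contains $e_0\ne0$, because $e_0$ is primitive. So $g$ is not a ghost, and the ghost ideal is exactly $(\varepsilon)$.
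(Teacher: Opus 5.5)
Your proposal is correct and is essentially the paper's proof. Like the paper, you identify degree-zero derived endomorphisms with comodule endomorphisms, parametrize them by $(a,b,c)$, use the invariance of $v_1$ modulo $p$ to reduce compatibility to $a=b$, and detect $a$ through the primitive $e_0$ (the map $1\mapsto e_0$) to identify the ghost ideal. There are two cosmetic slips, neither of which matters because the argument only uses $\eta_R(v_1)\equiv\eta_L(v_1)\pmod p$: by left linearity, $\psi(cv_1e_0)$ contributes $c\,\eta_L(v_1)\otimes e_0$, while $1\otimes cv_1e_0=c\,\eta_R(v_1)\otimes e_0$; and for the Araki generators used here, $\eta_R(v_1)=v_1+(p-p^p)t_1$, whereas $v_1+pt_1$ is the Hazewinkel formula.
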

\begin{proof}
Since $M$ lies in the heart, its degree-zero derived endomorphisms are its ordinary comodule endomorphisms. A degree-zero $BP_*/p$-linear endomorphism has the form
\[
 g(e_0)=ae_0,\qquad g(e_1)=be_1+cv_1e_0, \qquad a,b,c\in\F_p.
\]
Comparison of coactions gives $(b-a)\cdot t_1=0$, hence $a=b$. Conversely, every map with $a=b$ respects the coaction, since $v_1$ is invariant modulo $p$. Thus every endomorphism is uniquely $a\id_M+cf$, and $f^2=0$ gives the ring formula. A ghost must kill the map $BP_*\to M$ given by $1\mapsto e_0$, so $a=0$. Conversely, every multiple of $f$ is a ghost by \eqref{eq:ghost-calculation}.
\end{proof}

The ring is local, so $M$ is indecomposable. For $c\in\F_p^\times$, the maps $1+cf$ are nonidentity self-equivalences of order $p$ and induce the identity on all detecting Ext groups.

\begin{remark}
The tensor nonnilpotence is compatible with the weak algebraic nilpotence theorems of Barthel--Heard~\cite[Section~5]{BH}, whose hypotheses use chromatic detecting objects. After derived extension of scalars to $v_1^{-1}BP_*/p$, degree-zero cohomology is the ordinary extension of scalars of $M$. The induced map still sends $e_1$ to $v_1e_0\ne0$. Thus $f$ remains nonzero after this chromatic base change, although it acts trivially on all detecting Ext groups.
\end{remark}

\subsection{Projective spaces and composites of ghosts}
\label{subsec:projective-spaces}

For any $r,n\geq1$, finite projective spaces will provide a ghost whose first $n$ powers all have exact additive order $p^r$. Let $E$ be as in Section~\ref{subsec:strict-operation}, and assume that $E$ is not rational, equivalently $E_*/p\ne0$. Write $D$ for Spanier--Whitehead duality in the $p$-local stable homotopy category.

\begin{definition}\label{def:projective-comodule}
For $m,r\geq1$, define the $E_*E/p^r$-comodule
\[
 M(m,r) =E_*\bigl(D(\Sigma^\infty\mathbb{C}P^m)\wedge S/p^r\bigr).
\]
Here $\mathbb{C}P^m$ is based at $\mathbb{C}P^0$, and its suspension spectrum is reduced. Write $x$ for the orientation class, viewed in internal degree $-2$. Finite duality and the complex orientation identify
\begin{equation}\label{eq:projective-comodule}
 M(m,r) \cong\widetilde E^{-*}(\mathbb{C}P^m)/p^r =(E_*/p^r)\{x,x^2,\ldots,x^m\}, \qquad |x^j|=-2j.
\end{equation}
Formal power series are evaluated at $x$ using $x^{m+1}=0$.
\end{definition}

\begin{lemma}\label{lem:projective-operation}
For every integer $u\equiv1\pmod{p^r}$, the operation $\Theta_u$ on $M(m,r)$ satisfies
\begin{equation}\label{eq:projective-operation}
 \Theta_u(x) =[u]_F^{-1}(x).
\end{equation}
Moreover, $M(m,r)$ is built from $2m$ shifts of $E_*$ and satisfies $p^r\cdot\id_{M(m,r)}=0$.
\end{lemma}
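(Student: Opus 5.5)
The plan is to compute the coaction on $M(m,r)$ from the standard description of the $E_*E$-coaction on $E^*(\mathbb{C}P^m)$, and then read off $\Theta_u$ from its definition \eqref{eq:Theta}. By Spanier--Whitehead duality, $E_*(D\Sigma^\infty\mathbb{C}P^m)\cong\widetilde E^{-*}(\mathbb{C}P^m)$. This is a free $E_*$-module on $x,\dots,x^m$, since the Atiyah--Hirzebruch spectral sequence collapses for an even complex-oriented theory. Because $E_*(\mathbb{C}P^m)$ is free, smashing with $S/p^r$ only reduces modulo $p^r$, which gives \eqref{eq:projective-comodule}.

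The coaction on the cohomology class $x$ is governed by the universal strict isomorphism. Over $E_*E$, the two orientations $\eta_L^*x$ and $\eta_R^*x$ of $E\wedge E$ are related by $\eta_R^*x=h(\eta_L^*x)$, or by its inverse depending on conventions. Here $h$ is the universal strict isomorphism, whose coefficients are the $t_i$ \cite[Lemma~A2.1.26]{Ravenel}. Consequently $\psi(x)=\sum_i b_i\otimes x^{i+1}$, with $b_i$ the coefficients of the power series expressing the coordinate change in the inverse-coordinate convention. The coaction on powers follows multiplicatively, since $\widetilde E^*(\mathbb{C}P^m)$ is a comodule algebra. Applying $\theta_u\otimes1$ then specializes the universal strict isomorphism to $[u]_F$, so that $\Theta_u(x)$ is $[u]_F(x)$ or $[u]_F^{-1}(x)$.

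The main obstacle is pinning down which of the two it is. I will fix it by matching against the computation \eqref{eq:theta-mod-p}. There, the same inverse-coordinate convention gave $\theta_{1+p}(t_1)=-v_1$, and the coaction of the $\alpha_1$-cofiber had a $+t_1$ term. Concretely, I will check the sign on the lowest nontrivial coefficient, or compare with Remark~\ref{rem:classical-realization}: $M(p-1,1)$ truncated to $x^{p-1},x^{\,}$ recovers a shift of $M$, up to duality. This yields \eqref{eq:projective-operation}. Since $\Theta_u$ is multiplicative on the comodule algebra, it is determined by its value on $x$.

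For the finiteness claims, I will use the cellular filtration of $\mathbb{C}P^m$, whose successive quotients are $S^{2j}$. It gives a filtration of $M(m,r)$ by subcomodules with subquotients $\Sigma^{-2j}(E_*/p^r)$, for example $(x^j,\dots,x^m)$. Each $E_*/p^r$ is the cofiber of $p^r$ on $E_*$. This builds $M(m,r)$ from $2m$ shifts of $E_*$. Finally, $p^r\cdot\id=0$ holds already at the level of comodules, hence in the derived category.
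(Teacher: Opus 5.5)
Your finiteness argument (the subcomodules $(x^j,\ldots,x^m)$ with subquotients $\Sigma^{-2j}(E_*/p^r)$, each the cofiber of $p^r$ on a shifted unit) matches the paper's. The gap is in the first assertion, whose whole content is the direction.

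You stop at ``$\Theta_u(x)$ is $[u]_F(x)$ or $[u]_F^{-1}(x)$'' and propose to decide by matching against \eqref{eq:theta-mod-p}. That cannot work as stated. The value $\theta_{1+p}(t_1)=-v_1$ gives $\Theta_{1+p}(x)\equiv[1+p]_F^{-1}(x)$ to first order only if you already know that the $x^p$-coefficient of $\psi(x)$ on $\widetilde E^{-*}(\mathbb{C}P^m)$ is $+t_1$. That is, you must know $\psi(x)=h^{-1}(1\otimes x)$ rather than $h(1\otimes x)$, which is precisely the open question. Your proposed sources do not supply this sign:
\begin{itemize}
\item The $+t_1$ in \eqref{eq:M-coaction} is part of the definition of the abstract comodule $M$, not a computation on $\mathbb{C}P^p$.
\item The versions of $M$ with $+t_1$ and $-t_1$ are isomorphic via $e_0\mapsto-e_0$. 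This is why Remark~\ref{rem:classical-realization} needs ``compatible cell orientations'' and carries no sign information.
\item The identification $e_0=x^p$, $e_1=x$ in Example~\ref{ex:projective-rank-two} rests on the coaction formula that this lemma establishes, so using it would be circular.
\end{itemize}
Also, $M(p-1,1)$ is the wrong test object. There $x^p=0$, all generators are primitive, and no $t_1$ appears; you need $m\geq p$.

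The missing step is to compute the coaction rather than fix it by consistency. On $E^{-*}(\mathbb{C}P^m)$ the coaction is $(\eta_L)_*$ followed by the K\"unneth isomorphism $(E\wedge E)^{-*}(\mathbb{C}P^m)\cong E_*E\otimes_{E_*}E^{-*}(\mathbb{C}P^m)$ formed with the right $E$-factor. So $\psi(x)=\eta_L(x)$, while $1\otimes x$ corresponds to $\eta_R(x)$. Now take the universal strict isomorphism $h\colon\eta_L^*F\to\eta_R^*F$; this is the isomorphism that $\theta_u$ specializes to $[u]_F$ in Definition~\ref{def:formal-multiplication-operation}. It satisfies $h(\eta_L(x))=\eta_R(x)$, hence $\psi(x)=h^{-1}(1\otimes x)$. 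Since $\theta_u$ is a ring map, $\theta_u(h^{-1})=[u]_F^{-1}$, which is \eqref{eq:projective-operation}.

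With this in hand, your first-order comparison becomes a genuine sanity check, since the $x^p$-coefficient of $h^{-1}(x)=\sum^F t_ix^{p^i}$ is $t_1$. It cannot substitute for the derivation. Either direction would suffice for Theorem~\ref{thm:projective-composites}, because $(1+p^r)^n-1$ and $1-(1+p^r)^{-n}$ have the same $p$-adic valuation. But the lemma as stated asserts the inverse.
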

\begin{proof}
Under finite duality, the coaction is the composite
\[
 \psi:E^{-*}(\mathbb{C}P^m) \xrightarrow{(\eta_L)_*}(E\wedge E)^{-*}(\mathbb{C}P^m) \cong E_*E\otimes_{E_*}E^{-*}(\mathbb{C}P^m).
\]
The first map sends $x$ to $\eta_L(x)$. The displayed isomorphism is the K\"unneth isomorphism using the right $E$-factor, so $\eta_R(x)$ corresponds to $1\otimes x$. These two orientations have formal group laws $\eta_L^*F$ and $\eta_R^*F$, respectively. The universal strict isomorphism $h:\eta_L^*F\to\eta_R^*F$ satisfies $h(\eta_L(x))=\eta_R(x)$ \cite[Theorems~4.1.11 and~4.1.19]{Ravenel}. Thus, passing to reduced cohomology and reducing modulo $p^r$ gives
\[
 \psi(x) =h^{-1}(1\otimes x).
\]

By Definition~\ref{def:formal-multiplication-operation}, $\theta_u(h)=[u]_F$. Specialization commutes with composition, so $\theta_u(h^{-1})=[u]_F^{-1}$. Applying $\theta_u\otimes1$ to the coaction therefore proves \eqref{eq:projective-operation}.

For finiteness, use the even-cell filtration of $M(m,r)$. Its comodule quotients are $\Sigma^{-2j}(E_*/p^r)$ for $1\leq j\leq m$. Each is the cofiber of $p^r$ on a shifted unit, so $M(m,r)$ is built from $2m$ shifts of $E_*$. Equation~\eqref{eq:projective-comodule} also gives $p^r\cdot\id_{M(m,r)}=0$.
\end{proof}

\begin{example}\label{ex:projective-rank-two}
For $m=p$ and $r=1$, the construction recovers the rank-two comodule $M$ of Section~\ref{subsec:coefficient-ghost} as a direct summand after extension of scalars:
\[
 \Sigma^{2p}M(p,1) \cong (E_*\otimes_{BP_*}M) \oplus\bigoplus_{j=2}^{p-1}\Sigma^{2p-2j}(E_*/p).
\]
Indeed, the initial terms $x+t_1x^p$ of the inverse strict isomorphism give $e_0=x^p$ and $e_1=x$ on the first summand; all remaining generators are primitive. The map $1-\Theta_{1+p}$ sends $e_1$ to $v_1e_0$ and is zero on the other generators. It therefore restricts to the coefficient extension of $f$. At $p=2$, the displayed direct sum has no additional summands.
\end{example}

The following theorem proves Theorem~\ref{thm:algebraic-intro}\textup{(1)} by taking $E=BP$.

\begin{theorem}\label{thm:projective-composites}
Let $E$ be an even, $p$-local Landweber exact theory with a $p$-typical orientation, and assume $E_*/p\ne0$. For every $r,n\geq1$, there is an $m\geq1$ such that the ghost
\[
 f=1-\Theta_{1+p^r}: M(m,r)\longrightarrow M(m,r)
\]
satisfies
\[
 p^r\cdot\id_{M(m,r)}=0,\qquad p^{r-1}\cdot f^j\ne0\quad(1\leq j\leq n).
\]
Thus each of $f,f^2,\ldots,f^n$ has exact additive order $p^r$. In particular, at every prime the ghost ideal in $\D_{\mathrm{fin}}(BP_*BP)$ is not nilpotent, even when restricted to heart objects annihilated by $p$.
\end{theorem}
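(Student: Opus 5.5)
By Proposition~\ref{prop:principal-unit-ext}, $f=1-\Theta_{1+p^r}$ is a ghost. By Lemma~\ref{lem:projective-operation}, $p^r\cdot\id_{M(m,r)}=0$. It remains to show $p^{r-1}f^j\neq0$ for $1\le j\le n$ when $m$ is large. Since $M(m,r)$ lies in the heart, it suffices to exhibit one element of the underlying comodule on which $p^{r-1}f^j$ is nonzero. We take this element to be $x$ and work with the explicit formula $\Theta_{1+p^r}(x)=[1+p^r]_F^{-1}(x)$. We then track the leading term of $f^j(x)$ in the filtration by powers of $x$.

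\textbf{Leading term of $[1+p^r]_F$.} Write $u=1+p^r$. Then $[u]_F(x)=F(x,[p^r]_F(x))$. We work over $E_*/p^r$, and we still want to see a term surviving after multiplication by $p^{r-1}$. The natural candidate uses $[p]_F(x)=px+\cdots+v_1x^p+\cdots$ with Araki generators. Iterating, modulo $(p^r)$ the series $[p^r]_F(x)$ contains a term $c\,v_1^{a}x^{p^k}$ with unit coefficient times a power of $p$. I would aim for the cleanest statement:
\[
 [p^r]_F(x)\equiv p^{r-1}v_1x^p + (\text{terms of order}\ge x^{p}\text{ in }(p^{r-1},v_1^2,\ldots)) \pmod{p^r,\ \text{higher filtration}}.
\]
If this is messy, I would instead choose a lowest-degree monomial $x^N$ whose coefficient $c$ in $[p^r]_F(x)$ is nonzero modulo $p$ up to a power of $p$, and record its $p$-adic valuation.

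\textbf{Powers of $f$.} Write $f(x)=x-[u]_F^{-1}(x)=g(x)$, where $g(x)=c\,x^N+O(x^{N+1})$. Naturality and multiplicativity of $\Theta_u$ (it is a ring map on $E^{-*}(\mathbb{C}P^m)$, being specialization of the coaction) give $\Theta_u(x^k)=\Theta_u(x)^k$. Hence $f(x^k)=x^k-(x-g(x))^k=k\,c\,x^{k+N-1}+\cdots$. Iterating, $f^j(x)$ has leading term $\bigl(\prod_{i<j}(1+i(N-1))\bigr)c^jx^{1+j(N-1)}$. The integer coefficients $1+i(N-1)$ are prime to $p$ when $N\equiv1\pmod p$, for example $N=p^k$. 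With this choice, only the $p$-adic valuation of $c^j$ matters. Here lies the main obstacle: $c^j$ may have valuation growing with $j$, and then $p^{r-1}c^j$ can vanish modulo $p^r$.

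\textbf{Avoiding the $p$-power.} To avoid this, I want a leading coefficient that is a unit multiple of a \emph{polynomial generator}, with no $p$-factor. Over $E_*/p^r$ with $r\ge2$, the term $p^rx$ vanishes. The first possibly surviving terms of $[p^r]_F(x)$ then come from iterated composition of $[p]_F$. These have the shape $p^{r-1}v_1x^p$, $p^{r-2}v_1^{1+p}x^{p^2}$, \dots, $v_1^{(p^r-1)/(p-1)}x^{p^r}$. The last has no $p$-factor, and it should be the leading term modulo the ideal $(p)$ in a suitable filtration. So I would grade by the $p$-adic filtration together with $x$-degree. To get the cleanest argument, I would reduce further modulo $(p,v_2,v_3,\ldots)$: this detects a term $v_1^{e}x^{p^r}$ with $e=(p^r-1)/(p-1)$, so that $c^j=\pm v_1^{ej}\ne0$ in $\F_p[v_1]$.

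To certify $p^{r-1}f^j(x)\neq0$, I use $M(m,r)\cong(E_*/p^r)\{x^i\}$, which is free. Then $p^{r-1}\cdot(\text{coefficient})\ne0$ iff the coefficient is nonzero modulo $p$. This reduces to showing that the coefficient of $x^{1+j(p^r-1)}$ in $f^j(x)$ is a unit multiple of $v_1^{ej}$ modulo $(p,v_2,\ldots)$, where it is nonzero because $E_*/p\ne0$ and the Landweber-exact sequence makes $v_1$ non-nilpotent in $E_*/(p)$. Height-one assumptions may fail, for example when $v_1\mapsto0$ in $E_*$. If so, I would replace $v_1$ by the first $v_h$ nonzero modulo $p$ and $p^r$-series by the corresponding $[p]_F(x)\equiv v_hx^{p^h}$. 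Finally, take $m\ge1+n(p^{hr}-1)$ so that all relevant powers of $x$ survive.
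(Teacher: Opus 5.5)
There is a genuine gap in your step on powers of $f$. The iteration formula itself is correct: if $g(x)=x-\Theta_u(x)=cx^N+O(x^{N+1})$, then the coefficient of $x^{1+j(N-1)}$ in $f^j(x)$ is exactly $\prod_{0\le i<j}(1+i(N-1))\,c^j$. But your claim that these integers are prime to $p$ for $N=p^k$ is false. Modulo $p$ we have $p^k\equiv0$, not $1$, and the factor with $i=1$ is $N$ itself.

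Modulo $p$, the leading exponent of $g$ is $N=p^r$, with coefficient a unit times $v_1^{(p^r-1)/(p-1)}$. By Frobenius, $f(x^{p^r})\equiv g(x)^{p^r}\pmod p$. Hence the coefficient of $x^{1+j(p^r-1)}$ in $f^j(x)$ vanishes modulo $p$ for every $j\ge2$, and your certificate detects nothing beyond $j=1$.

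Your bound $m\ge1+n(p^r-1)$ is also too small. Take $p$ odd and $n=p$. Then $f^p\equiv1-\Theta_{(1+p^r)^p}\pmod p$, and $x-[(1+p^r)^{-p}]_F(x)\in(p,x^{p^{r+1}})$. By multiplicativity, $f^p$ vanishes on $M(m,r)/p$ whenever $m<p^{r+1}$. So $p^{r-1}f^p=0$ for $m=1+p(p^r-1)$. Your reduction to showing $f^j(x)\not\equiv0\pmod p$ via freeness is fine and matches the paper.

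The missing idea is to avoid iterating leading terms altogether. After enlarging $n$, assume it is a power of $p$. The binomial theorem and $\Theta_u\Theta_v=\Theta_{uv}$ then give $f^n\equiv1-\Theta_{(1+p^r)^n}\pmod p$, which is an operation of the same shape as $f$. Write $1-(1+p^r)^{-n}=cp^k$ with $c$ a $p$-local unit, and take $m=p^k$. (At $p=2$, $r=1$, the exponent $k$ can exceed $r+\log_2 n$; for example $1-3^{-2}=2^3/9$.) The paper then computes, in $M(p^k,r)/p$,
\[
 f^n(x)=x-[1-cp^k]_F(x)=c[p^k]_F(x)=c\,v_1^{(p^k-1)/(p-1)}x^{p^k}.
\]
This uses only $[p]_F(x)\equiv v_1x^p+O(x^{p+1})$ and the truncation $x^{p^k+1}=0$. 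The powers $f^j$ with $j<n$ are then nonzero because $f^n=f^{n-j}f^j$.

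The truncation also makes your reduction modulo $(p,v_2,v_3,\ldots)$ unnecessary. That reduction is in fact unavailable for general $E$: for $E=E(h)$ with $h\ge2$, $v_h$ is a unit, so the quotient ring is zero.

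Finally, the case $v_1\mapsto0$ cannot occur. Landweber exactness and $E_*/p\ne0$ make $v_1$ a nonzerodivisor on $E_*/p$, and this is exactly what gives the final nonvanishing.
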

\begin{proof}
Proposition~\ref{prop:principal-unit-ext} shows that $f$ is a ghost for every $m$. By increasing $n$, we may assume that it is a power of $p$. Since $p$ is regular on $E_*$, multiplication by $p^{r-1}$ induces an isomorphism $E_*/p\cong p^{r-1}(E_*/p^r)$. The free basis in \eqref{eq:projective-comodule} gives the corresponding isomorphism $M(m,r)/p\cong p^{r-1}M(m,r)$, commuting with $f$. It therefore suffices to make $f^n$ nonzero modulo $p$.

Write
\[
 1-(1+p^r)^{-n}=cp^k,\qquad c\in\Z_{(p)}^\times,
\]
and set $m=p^k$. We calculate modulo $p$, using $x^{p^k+1}=0$. The binomial theorem gives
\[
 f^n=(1-\Theta_{1+p^r})^n \equiv1-\Theta_{1+p^r}^{\,n}\pmod p.
\]
The composition law in Lemma~\ref{lem:principal-unit-automorphism} gives $\Theta_{1+p^r}^{\,n}=\Theta_{(1+p^r)^n}$. By Lemma~\ref{lem:projective-operation}, this operation sends $x$ to $[(1+p^r)^{-n}]_F(x)$. Hence
\begin{equation}\label{eq:projective-power}
 f^n(x)\equiv x-[(1+p^r)^{-n}]_F(x) =x-[1-cp^k]_F(x)\pmod p.
\end{equation}
To compute $[1-cp^k]_F(x)$, we first iterate the $p$-series. The expansion $[p]_F(x)=v_1x^p+O(x^{p+1})$ gives $[p^{k-1}]_F(x)\in(x^{p^{k-1}})$ by induction. After substitution, the remainder lies in $(x^{(p+1)p^{k-1}})=0$, since $(p+1)p^{k-1}\ge p^k+1$. Iterating the leading term therefore gives
\begin{equation}\label{eq:projective-p-series}
 \begin{aligned}
 [p^k]_F(x)
 &=v_1\bigl([p^{k-1}]_F(x)\bigr)^p\\
 &=v_1^{1+p+\cdots+p^{k-1}}x^{p^k}\\
 &=v_1^{(p^k-1)/(p-1)}x^{p^k}.
 \end{aligned}
\end{equation}
Since $[p^k]_F(x)\in(x^{p^k})$ and $x^{p^k+1}=0$, we have $x[p^k]_F(x)=\bigl([p^k]_F(x)\bigr)^2=0$. Thus $F(X,Y)\equiv X+Y\pmod{XY}$ and $[-c]_F(Y)=-cY+O(Y^2)$ give
\begin{equation}\label{eq:projective-scalar-expansion}
 [1-cp^k]_F(x) =F\bigl(x,[-c]_F([p^k]_F(x))\bigr) =x-c[p^k]_F(x).
\end{equation}
Combining \eqref{eq:projective-power}, \eqref{eq:projective-p-series}, and \eqref{eq:projective-scalar-expansion} gives
\[
 f^n(x)\equiv c[p^k]_F(x) =cv_1^{(p^k-1)/(p-1)}x^{p^k}\pmod p.
\]
Landweber exactness makes $v_1$ regular on the nonzero ring $E_*/p$. Since $c$ is a unit, $f^n$ is nonzero modulo $p$, and hence $p^{r-1}f^n\ne0$.

It follows that $p^{r-1}f^j\ne0$ for $1\leq j\leq n$. Since $p^r\id_{M(p^k,r)}=0$, each power has exact additive order $p^r$, also in the derived category because the source and target lie in the heart.
\end{proof}

\begin{example}\label{ex:order-four-composite}
At $p=2$, take $E=BP$ and $f=1-\Theta_5:M(8,2)\to M(8,2)$. Since $1-5^{-2}=(3/25)2^3$, the calculation in the proof gives
\[
 f^2(x)\equiv v_1^7x^8\not\equiv0\pmod2, \qquad 2\cdot f^2\ne0.
\]
Thus both $f$ and $f^2$ have exact additive order four. The source can be built from sixteen shifts of the unit, and its motivic realization has a model with thirty-two cells over the uncompleted sphere by Proposition~\ref{prop:projective-realization}.
\end{example}

\begin{remark}\label{rem:classical-principal-units}
The automorphisms $\Theta_u$ of $M(m,r)$ admit classical realizations. Let $u$ be a positive integer prime to $p$, and consider
\[
 q_u:\mathbb{C}P^m\longrightarrow\mathbb{C}P^m,\qquad [z_0:\cdots:z_m]\longmapsto[z_0^u:\cdots:z_m^u].
\]
The restriction of $q_u$ to $\mathbb{C}P^1$ has degree $u$. Since $H^*(\mathbb{C}P^m;\Z_{(p)})$ is generated in degree two, $q_u^*$ acts by $u^j$ on $H^{2j}$. Thus $q_u$ is a $p$-local homotopy equivalence. For $u>1$, its action on $H^2$ shows that it is not homotopic to the identity, even $p$-locally.

Now assume $u\equiv1\pmod{p^r}$, so $q_u$ induces the identity on $H^*(\mathbb{C}P^m;\Z/p^r)$. On $E$-cohomology, $q_u^*(x)=[u]_F(x)$, so
\[
 g_u=D(\Sigma^\infty q_u^{-1})\wedge\id_{S/p^r}: D(\Sigma^\infty\mathbb{C}P^m)\wedge S/p^r \longrightarrow D(\Sigma^\infty\mathbb{C}P^m)\wedge S/p^r
\]
realizes $\Theta_u$, where the inverse is taken $p$-locally. For $E=BP$, the difference $1-g_u$ induces zero on the Adams--Novikov $E_2$-term. This does not imply that $1-g_u$ is a classical ghost.
\end{remark}

\begin{remark}
The ghost lemma~\cite[Theorem~3.5]{Christensen} bounds the length of a nonzero composite of ghosts with any fixed finite source. Theorem~\ref{thm:projective-composites} shows that no bound works uniformly over all finite sources.
\end{remark}

\subsection{Local algebraic ghosts}\label{subsec:local-algebraic}

We now adapt the construction to height $h$, using the first nonzero term of the $p$-series modulo $I_h$. Fix a height $h>0$ and an integer $r\geq1$, and put
\[
 I_h=(p,v_1,\ldots,v_{h-1})\subset E(h)_*,\qquad E(h)_*/I_h=\F_p[v_h^{\pm1}].
\]
Change of rings along $BP_*\to E(h)_*$ gives an equivalence \cite[Theorem~6.4(2) and Definition~6.13]{BH}
\begin{equation}\label{eq:local-presentation}
 L_{K(h)}\operatorname{Stable}(BP_*BP) \simeq L_{K(h)}\operatorname{Stable}(E(h)_*E(h)).
\end{equation}
Here $L_{K(h)}$ denotes algebraic localization. In the $E(h)$ presentation, it is derived $I_h$-adic completion \cite[Corollary~5.26]{BHV}, giving
\[
 L_{K(h)}E(h)_*\simeq \operatorname*{holim}_{s\geq1}E(h)_*/I_h^s.
\]
The inverse equivalence sends $E(h)_*/I_h^s$ to $v_h^{-1}(BP_*/I_h^s)$ \cite[Propositions~5.22 and~7.16]{BHV}, since $p,v_1,\ldots,v_{h-1}$ act nilpotently on $BP_*/I_h^s$. Thus
\[
 L_{K(h)}BP_*\simeq \operatorname*{holim}_{s\geq1}v_h^{-1}(BP_*/I_h^s).
\]
Both limits are taken in the corresponding stable comodule categories. Thick and localizing subcategories include all even internal shifts.

For $m\geq1$, consider the comodule
\[
 N_m=\widetilde E(h)^{-*}(\mathbb{C}P^m)/I_h^r =(E(h)_*/I_h^r)\{x,x^2,\ldots,x^m\}, \qquad |x^j|=-2j.
\]
The following theorem proves Theorem~\ref{thm:algebraic-intro}\textup{(2)}.

\begin{theorem}\label{thm:local-composites}
For every prime $p$ and integers $h,r,n\geq1$, there is a compact object $M\in L_{K(h)}\operatorname{Stable}(BP_*BP)$ with $p^r\cdot\id_M=0$ and a ghost $f:M\to M$ such that $f^j$ has exact additive order $p^r$ for every $1\leq j\leq n$. Consequently, the local algebraic generating hypothesis fails at every positive height, and the ghost ideal on compact local objects is not nilpotent.
\end{theorem}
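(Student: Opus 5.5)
The plan is to repeat the projective-space construction of Theorem~\ref{thm:projective-composites} in the Johnson--Wilson presentation \eqref{eq:local-presentation}. The one change is to replace the coefficient ring $E(h)_*/p^r$ by a torsion quotient, so that the resulting comodule is already $K(h)$-local and compact.

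\textbf{Construction.} Choose an invariant regular sequence
\[
 J=(p^r,v_1^{a_1},\ldots,v_{h-1}^{a_{h-1}})
\]
in $E(h)_*$, with each $a_i$ a large power of $p$. Landweber's invariance of powers $v_i^{p^N}$ modulo the preceding generators makes $J$ invariant. Put
\[
 M=\widetilde E(h)^{-*}(\mathbb{C}P^m)/J=(E(h)_*/J)\{x,\ldots,x^m\},
\]
a graded $E(h)_*E(h)/p^r$-comodule with $p^r\cdot\id_M=0$. Iterated cofibers of the invariant regular elements build $E(h)_*/J$ from shifts of $E(h)_*$. The even-cell filtration of $\mathbb{C}P^m$, as in Lemma~\ref{lem:projective-operation}, then places $M$ in the thick subcategory generated by shifts of the unit.

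\textbf{Locality, compactness, and the ghost property.} Since $M$ is $I_h$-power torsion, derived $I_h$-completion fixes it, so $M$ is $K(h)$-local. It is compact in the local category, being the localization of a compact torsion object (this uses \cite{BH,BHV}). For the ghost condition, $M$ is local, so the localization adjunction gives
\[
 \Hom(\Sigma^{2b}L_{K(h)}E(h)_*,M[s])\cong\Ext^{s,2b}_{E(h)_*E(h)}(E(h)_*,M).
\]
Proposition~\ref{prop:principal-unit-ext} with $u=1+p^r$ then shows that $f=1-\Theta_{1+p^r}$ induces zero on these groups, so $f$ is a ghost. Odd shifts contribute nothing, as in Section~\ref{subsec:categories}.

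\textbf{Nonvanishing.} Because $J$ is a regular sequence starting with $p^r$, multiplication by $p^{r-1}$ identifies $M/p$ with $p^{r-1}M$, compatibly with $f$. Reducing further along $M/p\to N:=(E(h)_*/I_h)\{x,\ldots,x^m\}$, it suffices to show $f^n\ne0$ on $N$.
\begin{itemize}
\item Take $n$ a power of $p$ and write $1-(1+p^r)^{-n}=cp^k$ with $c$ a unit.
\item Modulo $I_h$ we have $[p]_F(x)=v_hx^{p^h}+O(x^{p^h+1})$. The iteration \eqref{eq:projective-p-series} then gives
\[
 [p^k]_F(x)=v_h^{(p^{hk}-1)/(p^h-1)}x^{p^{hk}}
\]
once $m=p^{hk}$.
\item The argument of \eqref{eq:projective-scalar-expansion} gives $f^n(x)\equiv c\,v_h^{(\cdots)}x^{p^{hk}}$, which is nonzero in $\F_p[v_h^{\pm1}]\{x^{p^{hk}}\}$.
\end{itemize}
Hence $p^{r-1}f^j\ne0$ for all $j\le n$. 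These maps are comodule maps between heart objects, so by the full faithfulness recalled in Section~\ref{subsec:categories} they remain nonzero in the stable and local categories.

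\textbf{Main obstacle.} I expect the hardest step to be the compactness and locality bookkeeping: checking that $M$ is a compact object of $L_{K(h)}\operatorname{Stable}$, and that maps out of shifts of the local unit into $M$ agree with Ext from $E(h)_*$. I would first try to cite \cite{BHV}, using that $I_h$-torsion compact objects are local and compact, and fall back on the explicit $I_h$-adic completion formula if that citation is not enough.
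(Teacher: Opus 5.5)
Your proposal is correct and is essentially the paper's proof. It has the same four ingredients: a projective-space comodule over an $I_h$-power-torsion quotient of $E(h)_*$, the ghost property from Proposition~\ref{prop:principal-unit-ext} via localization adjunction, compactness from membership in $\Thick(E(h)_*/I_h)$, and the height-$h$ iteration of the $p$-series with $m=p^{hk}$. On compactness, your iterated-cofiber construction from shifts of $E(h)_*$ would not suffice alone, because the local unit need not be compact; your "compact torsion object" reason is the one that does the work.

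The one variation is the choice of quotient and the nonvanishing step. You use the invariant regular ideal $J=(p^r,v_1^{a_1},\ldots,v_{h-1}^{a_{h-1}})$ and the reduction $M/p\cong p^{r-1}M\to N$. The paper instead quotients by $I_h^r$, which needs no invariance of $v_i$-powers, and argues that the $x^{p^{hk}}$-coefficient of $f^n(x)$ is a unit in $E(h)_*/I_h^r$.
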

\begin{proof}
Work in $\operatorname{Stable}(E(h)_*E(h))$. The regular sequence $p,v_1,\ldots,v_{h-1}$, whose terms are invariant modulo their predecessors, makes $E(h)_*/I_h$ compact. Since $N_m$ is finitely presented over $E(h)_*$ and annihilated by $I_h^r$, the Landweber filtration theorem \cite[Theorem~5.7]{HS} gives a finite filtration by shifts of $E(h)_*/I_h$. Hence $N_m\in\Thick(E(h)_*/I_h)$, the compact subcategory of the local category \cite[Theorem~3.5.3]{HPS}.

Localization adjunction and the bounded comparison \cite[Proposition~3.6 and Lemma~3.16]{BH} identify maps from shifts of the local unit to $N_m$ with $\Ext^{*,*}_{E(h)_*E(h)}(E(h)_*,N_m)$. For $u=1+p^r$, Proposition~\ref{prop:principal-unit-ext} therefore shows that $f=1-\Theta_u:N_m\to N_m$ is a local ghost.

By increasing $n$, we may assume that it is a power of $p$. Write
\[
 1-u^{-n}=1-(1+p^r)^{-n}=cp^k,\qquad c\in\Z_{(p)}^\times,
\]
and take $m=p^{hk}$. Modulo $I_h$, the Araki formula \cite[equation~(A2.2.4)]{Ravenel} gives $[p]_F(x)=v_hx^{p^h}+O(x^{p^h+1})$. The height-$h$ analogue of the calculation in Theorem~\ref{thm:projective-composites} gives
\[
 f^n(x)\equiv cv_h^{(p^{hk}-1)/(p^h-1)}x^{p^{hk}}\pmod{I_h}.
\]
Since $I_h/I_h^r$ is nilpotent, the $x^{p^{hk}}$-coefficient is a unit in $E(h)_*/I_h^r$. Also, $p^{r-1}\ne0$ in this ring, as setting $v_1,\ldots,v_{h-1}$ to zero gives $(\Z/p^r)[v_h^{\pm1}]$. Hence $p^{r-1}f^n\ne0$. It follows that $p^{r-1}f^j\ne0$ for $1\leq j\leq n$. Since $p^r\id_{N_m}=0$, each such power has exact additive order $p^r$. Full faithfulness of the bounded comparison preserves these orders, and transport along \eqref{eq:local-presentation} completes the proof.
\end{proof}

\begin{remark}
Every compact local object lies in $\Thick(E(h)_*/I_h)$. Localizing the finite construction of this quotient from $E(h)_*$ places it in the thick subcategory generated by shifts of the local unit. The ghost lemma \cite[Theorem~3.5]{Christensen} therefore bounds composites out of any fixed compact local object, although the unit need not be compact. The theorem shows that these bounds are not uniform.
\end{remark}

\section{Intrinsic motivic realizations}\label{sec:motivic}

We prove Theorem~\ref{thm:failure}\textup{(1)} by realizing algebraic ghosts as $C\tau$-linear motivic ghosts. Theorem~\ref{thm:intrinsic} gives the eight-cell example, and Proposition~\ref{prop:projective-realization} gives the general projective-space families.

\subsection{The complex-motivic realization}\label{subsec:complex}

Let $\alpha_1\in\pi_{2p-3,p-1}C\tau$ be the class represented by the cobar cocycle $[t_1]$. At $p=2$, this is $\eta$. Form the $C\tau$-module cofiber sequence
\begin{equation}\label{eq:motivic-triangle-intro}
 \Sigma^{2p-3,p-1}(C\tau/p)\xrightarrow{\alpha_1}C\tau/p \xrightarrow{i}X\xrightarrow{q}\Sigma^{2p-2,p-1}(C\tau/p), \qquad X=C\tau/(p,\alpha_1).
\end{equation}
The invariant $v_1$ modulo $p$ defines a $C\tau$-linear map $v_1:\Sigma^{2p-2,p-1}(C\tau/p)\to C\tau/p$.

\begin{mainthm}\label{thm:intrinsic}
Under the special-fiber comparison, the map $f$ of Theorem~\ref{thm:main} corresponds to the $C\tau$-linear composite
\begin{equation}\label{eq:motivic-map-intro}
 \alpha:X\xrightarrow{q}\Sigma^{2p-2,p-1}(C\tau/p) \xrightarrow{v_1}C\tau/p\xrightarrow{i}X.
\end{equation}
Its underlying motivic map is a nonzero ghost of exact additive order $p$ and square zero. The underlying spectrum has a model with eight cells over the uncompleted sphere
\begin{equation}\label{eq:sphere-model-intro}
 X\simeq\one_{\C}/(p,\widetilde\alpha_1,\tau).
\end{equation}
\end{mainthm}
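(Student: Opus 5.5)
We would transport everything across the Gheorghe--Wang--Xu comparison \cite[Theorem~1.1]{GWX}, which is an exact, fully faithful functor from the bounded derived category of even, degreewise $p$-completed $BP_*BP$-comodules into $C\tau$-modules and sends the completed unit to $C\tau$.

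\textbf{Step 1: the correspondence of $X$ and $M$.} As in the proof of Proposition~\ref{prop:two-primary-complete}, $C\tau/p$ corresponds to $BP_*/p$. Because $M$ is degreewise finite and annihilated by $p$, it is already complete. The class $\alpha_1$, represented by $[t_1]\in\Ext^{1,2p-2}$, corresponds to the extension class of \eqref{eq:M-extension}, since the coaction $\psi_M(e_1)=1\otimes e_1+t_1\otimes e_0$ is exactly the Yoneda extension with cocycle $t_1$. Exactness then identifies the cofiber $X$ of \eqref{eq:motivic-triangle-intro} with $M$, up to the conventions for shift and sign. Under this identification $i$ and $q$ become the inclusion and quotient of \eqref{eq:M-extension}, and $v_1$ becomes the comodule map $\Sigma^{2p-2}BP_*/p\to BP_*/p$, $1\mapsto v_1$. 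The composite $iv_1q$ sends $e_1\mapsto v_1e_0$ and $e_0\mapsto0$, so it is $f$ by \eqref{eq:coefficient-map}.

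\textbf{Step 2: the properties of $\alpha$.} Full faithfulness transfers the remaining claims. We have $\alpha\ne0$, $p\alpha=0$, and $\alpha^2=0$ by Theorem~\ref{thm:main}; the last also follows directly from $qi=0$. For the ghost property, the free--forgetful adjunction gives $\pi_{a,b}T\cong[\Sigma^{a,b}C\tau,T]_{C\tau}$. Under the comparison and the grading in \eqref{eq:isaksen}, the right-hand side becomes $\Ext^{2b-a,2b}_{BP_*BP}(BP_*,M)$, and $f_*$ vanishes on these groups by \eqref{eq:ghost-calculation}. Hence $\alpha_*=0$ on all $\pi_{a,b}$, and the underlying motivic map is a ghost. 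Since $\alpha\ne0$ and $p\alpha=0$, its exact additive order is $p$.

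\textbf{Step 3: the eight-cell model.} By Lemma~\ref{lem:descent}, $C\tau/p\simeq\one_{\C}/(p,\tau)$, which has four cells over the uncompleted sphere. We then need to lift $\alpha_1$ to a self-map $\widetilde\alpha_1:\Sigma^{2p-3,p-1}\one_{\C}/p\to\one_{\C}/p$ whose cofiber we can further cone by $\tau$. Here we would take the motivic lift of $\alpha_1$, namely the class detected by $h_0$ or $\eta$ at $p=2$, composed on the Moore spectrum, or at odd $p$ the Adams map $v_1$-type lift restricted appropriately. The key point is that $\tau$ commutes with $\widetilde\alpha_1$ up to homotopy, because $\tau$ is central in $\pi_{*,*}$. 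This gives
\[
\one_{\C}/(p,\widetilde\alpha_1,\tau)\simeq C\tau\wedge\one_{\C}/(p,\widetilde\alpha_1).
\]
We must also check that $C\tau\wedge\widetilde\alpha_1$ is the $C\tau$-module map $\alpha_1$ on $C\tau/p$. Since $[\Sigma^{2p-3,p-1}C\tau/p,C\tau/p]_{C\tau}\cong\pi_{2p-3,p-1}(C\tau/p)$, this reduces to the image of $\widetilde\alpha_1$ in the homotopy of $C\tau/p$ being $[t_1]$. This holds because the map $\pi_{*,*}\to\pi_{*,*}C\tau$ is the edge map to the Adams--Novikov $E_2$-page, and $\alpha_1$ is detected by $[t_1]$. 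Counting cells gives $2\cdot2\cdot2=8$.

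\textbf{Main obstacle.} We expect Step 3 to be the hardest. It requires producing $\widetilde\alpha_1$ on the uncompleted Moore spectrum, which means lifting from $\widehat\one_{\C}$ to $\one_{\C}$ and dealing with the Toda bracket obstruction $p\alpha_1=0$ at $p=2$, where $2\eta\ne0$ classically but $2\eta=0$ motivically. It also requires matching the result with the $C\tau$-linear cofiber. We would try first to define $\widetilde\alpha_1$ as $\alpha_1\wedge\id_{\one/p}$ for a class $\alpha_1\in\pi_{2p-3,p-1}\one_{\C}$, which exists because $\pi_{*,*}$ of the uncompleted sphere agrees with the completed one in this torsion stem. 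This avoids extending over the Moore spectrum altogether.
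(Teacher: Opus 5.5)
Your Step~1, the ghost argument in Step~2, and Step~3 follow the paper's route. The extension class of \eqref{eq:M-extension} is $\pm[t_1]$, so exactness of the comparison identifies $M$ with $X=C\tau/(p,\alpha_1)$ and $f$ with $iv_1q$. The ghost property follows from free--forgetful adjunction. The eight-cell model is the $\tau$-cofiber of the four-cell spectrum $T=\cofib(\widetilde\alpha_1\wedge\id_{\one_{\C}/p})$.

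The gap is the nonvanishing claim in Step~2. Full faithfulness of the Gheorghe--Wang--Xu comparison shows only that $\alpha\ne0$ \emph{as a $C\tau$-linear map}. The theorem asserts that the \emph{underlying} motivic map is nonzero, and you draw the conclusion ``since $\alpha\ne0$ and $p\alpha=0$, its exact additive order is $p$'' for that underlying map. The comparison says nothing about the forgetful functor $\Mod_{C\tau}\to\SH(\C)_{\cell,p}^{\wedge}$. That functor is not full (Theorem~\ref{thm:failure}(2)), and you give no reason why it is injective on $[X,X]_{C\tau}$.

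The paper supplies this step separately. The heart equivalence is computed by completed $\mathrm{MGL}_{\C}$-homology of the underlying spectrum, on which $\alpha$ acts by $e_1\mapsto v_1e_0\ne0$, so the underlying map is nonzero. Alternatively, you can close the gap with your own Step~3, provided you do it first. Your check that $C\tau\wedge\widetilde\alpha_1$ is the module map $\alpha_1$ gives $X\simeq C\tau\wedge T$ as $C\tau$-modules. The adjunction $[C\tau\wedge T,X]_{C\tau}\cong[T,X]$ then sends $\alpha$ to its underlying map precomposed with the unit $T\to C\tau\wedge T$, so the underlying map cannot vanish.

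Two smaller points in Step~3 also need repair. First, you assert without proof that completed and uncompleted homotopy agree in bidegree $(2p-3,p-1)$. What you actually need is a class $\widetilde\alpha_1\in\pi_{2p-3,p-1}\one_{\C}$ whose completed Adams--Novikov class is $[t_1]$. The paper takes $\eta$ at $p=2$ and, at odd primes, a rescaled motivic $J$-image of a Bott generator. Any $p$-torsion class of $\pi_{*,*}\widehat\one_{\C}$ does lift, through the Bockstein of $\one_{\C}/p\simeq\widehat\one_{\C}/p$, but you would still need a $p$-torsion completed class reducing to $[t_1]$.

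Second, ``$\tau$ commutes with $\widetilde\alpha_1$ up to homotopy'' is not the right mechanism. It yields only a noncanonical map on cofibers, and $\tau$ is a self-map of the completed sphere. Since $T$ is $p$-complete, Lemma~\ref{lem:descent} gives $\widehat\one_{\C}\wedge T\simeq T$, so $\tau$ acts as $\tau\wedge\id_T$ and its cofiber is $C\tau\wedge T$ directly.

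The remarks in your last paragraph are also inaccurate: $\eta$ is detected by $h_1$, not $h_0$, and $2\eta=0$ already classically. None of this matters once you take $\widetilde\alpha_1\wedge\id_{\one_{\C}/p}$, which is the paper's choice.
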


Here $\widetilde\alpha_1\in\pi_{2p-3,p-1}\one_{\C}$ lifts $\alpha_1$. Take $\eta$ at $p=2$. At odd primes, take the motivic $J$-image of a Bott generator~\cite{HKO-J}, rescaled by an integer prime to $p$ so that its completed Adams--Novikov class is $[t_1]$ \cite[Proposition~5.9 and Section~5.3]{Stahn}.

The four $C\tau$-cells of $X$ have bidegrees
\begin{equation}\label{eq:R-cells}
 (0,0),\quad(1,0),\quad(2p-2,p-1),\quad(2p-1,p-1).
\end{equation}
Figure~\ref{fig:R-cells} describes $\alpha=iv_1q$ in these cells. Its eight-cell model over the sphere has the four bidegrees in \eqref{eq:R-cells} and their translates by $(1,-1)$. At the prime two, the model is $\one_{\C}/(2,\eta,\tau)$.

\begin{proof}

Work first over $\C$ in the $p$-complete cellular category. The symmetric monoidal equivalence between $C\tau$-modules and $\operatorname{Stable}(BP_*BP)$ recalled in the introduction \cite[Corollary~1.2 and Remark~4.15]{GWX} sends, with our shift convention,
\begin{equation}\label{eq:comparison-shifts}
 (\Sigma^{2b}BP_*)[a-2b]\longleftrightarrow\Sigma^{a,b}C\tau.
\end{equation}
If a $C\tau$-module $T$ corresponds to a degreewise $p$-completed comodule $L$, then
\begin{equation}\label{eq:comparison-homotopy}
 \pi_{a,b}T \cong\Ext^{2b-a,2b}_{BP_*BP}(BP_*,L).
\end{equation}
Since $pM=0$, degreewise $p$-completion leaves $M$ and $f$ unchanged.

With cobar differential $\partial(z)=1\otimes z-\psi(z)$, the connecting class of \eqref{eq:M-extension} is $-[t_1]$, so the attaching class is $\alpha_1$. Hence
\begin{equation}\label{eq:realization-M}
 M\longleftrightarrow X=C\tau/(p,\alpha_1).
\end{equation}
Formula \eqref{eq:coefficient-map} then identifies $f$ with $\alpha=iv_1q$.

Free--forgetful adjunction gives
\begin{equation}\label{eq:free-forgetful}
 [\Sigma^{a,b}C\tau,X]_{C\tau} \cong[\Sigma^{a,b}\widehat\one_{\C},X] \cong[\Sigma^{a,b}\one_{\C},X].
\end{equation}
Consequently \eqref{eq:ghost-calculation} proves that the underlying map $\alpha$ is a motivic ghost.

To prove nonvanishing, recall that the heart equivalence is induced by completed $\mathrm{MGL}_{\C}$-homology of the underlying spectrum, followed by change of rings from $MU_*MU$ to $BP_*BP$ \cite[Section~1.2 and Proposition~4.11]{GWX}. In this description, $\alpha$ sends
\[
 e_1\longmapsto v_1e_0\ne0.
\]
Thus the underlying map is nonzero. Since $pM=0$ and $f^2=0$, the comparison gives $p\,\id_X=0$ and $\alpha^2=0$, so $\alpha$ has exact additive order $p$.

Finally, in the uncompleted category, form the cofiber of the action of $\widetilde\alpha_1$ on $\one_{\C}/p$:
\[
 T=\cofib\bigl(\widetilde\alpha_1: \Sigma^{2p-3,p-1}(\one_{\C}/p) \longrightarrow\one_{\C}/p\bigr).
\]
The spectrum $T$ has four cells over the sphere and is $p$-complete by Lemma~\ref{lem:descent}. Hence $\widehat\one_{\C}\wedge T\simeq T$, and the completed scalar $\tau$ defines a map in the uncompleted category $\tau_T:\Sigma^{0,-1}T\to T$. Its cofiber satisfies
\[
 \cofib(\tau_T)\simeq C\tau\wedge_{\widehat\one_{\C}}T \simeq C\tau/(p,\alpha_1)=X.
\]
This gives the eight-cell model in \eqref{eq:sphere-model-intro} and proves the case $r=n=1$ of Theorem~\ref{thm:failure}\textup{(1)}.
\end{proof}

\begin{corollary}\label{cor:relative-tensor-nonnilpotence}
For every $r\geq1$, the relative smash power
\[
 \alpha^{\wedge_{C\tau}r}: X^{\wedge_{C\tau}r}\longrightarrow X^{\wedge_{C\tau}r}
\]
is nonzero in the category of $C\tau$-modules. Each such map has additive order $p$ and square zero under composition.
\end{corollary}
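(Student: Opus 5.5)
The plan is to transport Theorem~\ref{thm:main} across the symmetric monoidal equivalence between $C\tau$-modules and $\operatorname{Stable}(BP_*BP)$ \cite[Corollary~1.2]{GWX}. Under this equivalence the relative smash product $\wedge_{C\tau}$ corresponds to the derived tensor product $\otimes^{\mathbb L}_{BP_*}$ on the comodule side, with the comodule tensor structure given by the diagonal coaction. By \eqref{eq:realization-M} and Theorem~\ref{thm:intrinsic}, $X$ corresponds to $M$ and $\alpha$ corresponds to $f$. Hence $X^{\wedge_{C\tau}r}$ corresponds to $M^{\otimes^{\mathbb L}_{BP_*}r}$, and $\alpha^{\wedge_{C\tau}r}$ corresponds to $f^{\otimes^{\mathbb L}_{BP_*}r}$. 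One checks that the monoidal structure maps are natural, so the equivalence sends the $r$-fold tensor of a morphism to the $r$-fold tensor of its image.

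Theorem~\ref{thm:main} already gives $f^{\otimes^{\mathbb L}r}\ne0$, with additive order $p$ and square zero. Both properties are preserved by an additive equivalence. Nonvanishing is immediate from faithfulness. For the order, note that $p\cdot\alpha^{\wedge r}$ corresponds to $p\cdot f^{\otimes r}=0$, and a nonzero map annihilated by $p$ has order exactly $p$. Square zero follows because $(\alpha^{\wedge r})^2=(\alpha^2)^{\wedge r}=0$ by functoriality of the smash product in each variable; equivalently, it can be transported from the comodule side.

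The main obstacle is the completion issue. The equivalence of \cite{GWX} is stated for degreewise $p$-completed comodules, whereas Theorem~\ref{thm:main} is stated in the uncompleted $\D(BP_*BP)$. Since $M$ is annihilated by $p$, it is already complete. The derived tensor powers are bounded complexes of $p$-torsion comodules with finitely generated terms in each degree, so completion does not change them. The only thing to verify is that the completed and uncompleted derived tensor products agree on such objects. I would do this by computing the tensor power with the flat resolution coming from the four-cell presentation \eqref{eq:M-extension}: $BP_*/p$ is resolved by $BP_*\xrightarrow{p}BP_*$, which is degreewise finite free, so the tensor power agrees in both settings. Nonvanishing can then also be checked directly on $H^0$, as in the proof of Theorem~\ref{thm:main}, via $e_1^{\otimes r}\mapsto v_1^re_0^{\otimes r}$.
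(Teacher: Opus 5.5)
Your proposal is correct and takes the same approach as the paper's proof. Both transport the nonzero tensor power $f^{\otimes^{\mathbb L}_{BP_*}r}$ of Theorem~\ref{thm:main} across the symmetric monoidal special-fiber equivalence of \cite{GWX}, use that $p$-torsion makes degreewise $p$-completion harmless, and carry the additive-order and square-zero relations along. Your treatment of the completion step, via a resolution by comodules free over $BP_*$ and the $H^0$ check $e_1^{\otimes r}\mapsto v_1^re_0^{\otimes r}$, spells out what the paper compresses into one sentence.
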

\begin{proof}
The special-fiber equivalence is symmetric monoidal \cite[Remark~4.15]{GWX}. The nonzero degree-zero cohomology map in the proof of Theorem~\ref{thm:main} is unchanged by degreewise $p$-completion. The equivalence therefore carries the nonzero algebraic tensor power to the displayed relative smash power. The additive-order and square-zero relations are also preserved.
\end{proof}

\begin{remark}\label{rem:intrinsic-self-equivalences}
The special-fiber equivalence identifies the degree-zero $C\tau$-linear endomorphism ring of $X$ with the dual-number ring of Proposition~\ref{prop:end-ring}. For $c\in\F_p^\times$, the maps $1+c\alpha$ are nonidentity self-equivalences of order $p$ which induce the identity on all bigraded homotopy groups. These assertions follow from $\alpha^2=0$, the exact additive order of $\alpha$, and the fact that $\alpha$ is a ghost.
\end{remark}

\begin{remark}\label{rem:betti}
The complex Betti realization $\operatorname{Re}(X)$ is contractible. Indeed, $p$-completed Betti realization inverts $\tau$, whereas $p\cdot\id_{X}=0$ implies that the ordinary Betti realization of $X$ is already $p$-complete. Thus the motivic counterexample gives no counterexample to Freyd's classical generating hypothesis.
\end{remark}

\subsection{Motivic realizations of the projective-space families}
\label{subsec:motivic-projective}

We now realize the projective-space families of Section~\ref{subsec:projective-spaces}, with $E=BP$.

\begin{proposition}\label{prop:projective-realization}
For $m,r\geq1$, let $X$ be the $C\tau$-module corresponding to $M(m,r)$. Then $X$ is annihilated by $p^r$ and has a model with $4m$ cells over the uncompleted sphere $\one_{\C}$. Every ghost endomorphism of this coefficient comodule realizes to a $C\tau$-linear motivic ghost. The additive order of every composition power is preserved after forgetting the module structure.
\end{proposition}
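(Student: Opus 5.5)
The plan is to follow the proof of Theorem~\ref{thm:intrinsic} step for step, with $M(m,r)$ in place of $M$. After degreewise $p$-completion, which changes nothing since $p^r M(m,r)=0$, the comodule $M(m,r)$ lies in the bounded heart. The special-fiber comparison \cite[Theorem~1.1 and Corollary~1.2]{GWX} therefore gives a $C\tau$-module $X$ and an isomorphism
\[
 \End_{C\tau}(X)\cong\End_{\D(BP_*BP)}(M(m,r))
\]
of rings in each bidegree, by full faithfulness on bounded objects. Because the comparison is additive, $p^r\cdot\id_{M(m,r)}=0$ from Lemma~\ref{lem:projective-operation} transfers to $p^r\cdot\id_X=0$, and every identity $p^{a}\cdot g^j\neq0$ or $=0$ among endomorphisms is preserved on the module side.

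For the ghost assertion, let $g$ be a ghost endomorphism of $M(m,r)$ and let $\alpha_g$ be the corresponding $C\tau$-linear map. Equation~\eqref{eq:comparison-homotopy} identifies $\pi_{a,b}(\alpha_g)$ with $g_*$ on $\Ext^{2b-a,2b}_{BP_*BP}(BP_*,M(m,r))$, and this map vanishes. The free--forgetful adjunction \eqref{eq:free-forgetful} shows that these groups are the bigraded homotopy groups of the underlying spectrum of $X$, computed over the uncompleted sphere. So the underlying map of $\alpha_g$ is a motivic ghost.

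The remaining assertion is that additive orders of composition powers survive forgetting the module structure. The forgetful functor is additive and sends composites to composites, so it suffices to show that it sends nonzero maps to nonzero maps. I would argue as in Theorem~\ref{thm:intrinsic}: the heart equivalence is induced by completed $\mathrm{MGL}_{\C}$-homology of the underlying spectrum followed by change of rings to $BP_*BP$ \cite[Proposition~4.11]{GWX}. A nonzero comodule endomorphism of the heart object $M(m,r)$ is therefore detected by $\mathrm{MGL}$-homology of the underlying map, and in particular that map is nonzero. Applying this to $p^{a}\cdot g^j$ for every $a$ and $j$ gives the preservation of orders.

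For the cell count, the plan is to mimic the final paragraph of the proof of Theorem~\ref{thm:intrinsic}. The even-cell filtration of Lemma~\ref{lem:projective-operation} presents $M(m,r)$ through $m$ extensions of shifted copies of $BP_*/p^r$. The corresponding $C\tau$-module $X$ is then an iterated cofiber of $m$ shifted copies of $C\tau/p^r$, and each attaching map is a class in $\pi_{*,*}$ of the earlier stage.

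The main obstacle is lifting these attaching maps to the uncompleted sphere so as to build a $2m$-cell spectrum $T$ with $C\tau\wedge T\simeq X$. I would try first to realize $T$ directly, as the motivic analogue of $D(\Sigma^\infty\mathbb{C}P^m)\wedge S/p^r$, namely the $p^r$ Moore spectrum on the dual of motivic $\mathbb{P}^m$. This object is cellular with $2m$ cells, and its $\mathrm{MGL}$-homology is the projective-space comodule. Lemma~\ref{lem:descent} makes $T$ $p$-complete, so the completed scalar $\tau$ acts on $T$, and $\cofib(\tau_T)\simeq C\tau\wedge_{\widehat\one_{\C}}T$ has $4m$ cells. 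To identify this cofiber with $X$, I would use that its image under the comparison is a heart object whose $\mathrm{MGL}$-homology comodule is $M(m,r)$.
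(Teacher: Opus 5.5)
Your treatment of the $p^r$-annihilation, the ghost assertion, and the preservation of additive orders through $\mathrm{MGL}_{\C}$-homology detection matches the paper's proof. The cell count is where your route differs.

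The paper never writes $X$ as $C\tau\wedge_{\widehat\one_{\C}}T$ for a finite $T$. Under the comparison, the even-cell filtration of $M(m,r)$ becomes a filtration of $X$ with quotients $\Sigma^{-2j,-j}(C\tau/p^r)$, $1\leq j\leq m$. Each quotient already has the four-cell model $\Sigma^{-2j,-j}\one_{\C}/(p^r,\tau)$ of Section~\ref{sec:complex}, and the paper refines the filtration by these cell structures to get $4m$ cells. The connecting maps are maps between $p$-complete spectra, so by Lemma~\ref{lem:descent} they already live in the uncompleted category. Pulling each extension back along the cell filtration of its quotient then attaches cells one at a time. No attaching map ever has to be lifted to the uncompleted sphere, so the \emph{main obstacle} you identify does not arise for the statement as posed.

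Your geometric route is also sound, and it proves more. It gives an explicit model $X\simeq C\tau\wedge T$ with $T$ the mod $p^r$ Moore spectrum on the dual of motivic $\mathbb{P}^m$. This parallels $X\simeq\one_{\C}/(p,\widetilde\alpha_1,\tau)$ in Theorem~\ref{thm:intrinsic} and has geometric attaching maps. The price is an identification step that you should spell out:
\begin{enumerate}
\item Check that $C\tau\wedge T$ lies in the heart. Its $\mathrm{MGL}_{\C}$-homology is $MU_*/p^r\{x,\ldots,x^m\}$ concentrated in Chow degree zero, using that $p^r$ is injective on $MU_*$.
\item Check that the coaction coming from the motivic orientation of $\mathrm{MGL}_{\C}$ on $\mathbb{P}^m$ reduces modulo $\tau$ to the classical formula $\psi(x)=h^{-1}(1\otimes x)$ of Lemma~\ref{lem:projective-operation}.
\end{enumerate}
Once both hold, the heart equivalence gives $C\tau\wedge T\simeq X$.
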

\begin{proof}
Under the comparison, the filtration in Section~\ref{subsec:projective-spaces} has quotients $\Sigma^{-2j,-j}(C\tau/p^r)$ for $1\leq j\leq m$. Each has a four-cell model over the uncompleted sphere by Section~\ref{sec:complex}; refining the filtration by these cell structures gives a model with $4m$ cells.

Degreewise completion leaves $M(m,r)$ and its endomorphisms unchanged, so the identity $p^r\cdot\id_{M(m,r)}=0$ holds on its realization. The adjunction~\eqref{eq:free-forgetful} and Ext identification~\eqref{eq:comparison-homotopy} prove the ghost assertion. As explained in Section~\ref{subsec:complex}, the heart comparison is induced by completed $\mathrm{MGL}_{\C}$-homology of the underlying spectrum. It therefore detects every nonzero integer multiple of every composition power, proving that additive orders are preserved after forgetting the module structure.
\end{proof}

For $r,n\geq1$, choose $m$ as in Theorem~\ref{thm:projective-composites}. Its ghost on $M(m,r)$ realizes to a $C\tau$-linear ghost $\alpha:X\to X$, and Proposition~\ref{prop:projective-realization} gives
\[
 p^r\cdot \alpha^j=0,\qquad p^{r-1}\cdot \alpha^j\ne0 \qquad(1\leq j\leq n).
\]
Together with the eight-cell example in Theorem~\ref{thm:intrinsic}, this completes the proof of Theorem~\ref{thm:failure}\textup{(1)}. All these spectra become contractible after inverting $\tau$.

\begin{remark}\label{rem:cobordism-contrast}
Ordinary algebraic cobordism homology $\mathrm{MGL}^{\C}_{**}$ distinguishes the Bockstein construction of Section~\ref{sec:complex} from the intrinsic ghost of Theorem~\ref{thm:intrinsic}:
\[
 \mathrm{MGL}^{\C}_{**}(\beta_{p^r})=0,\qquad \mathrm{MGL}^{\C}_{**}(\alpha)\ne0.
\]
For $Y=\one_{\C}/(p^r,\tau)$, with $r\geq1$, we have $\mathrm{MGL}^{\C}_{**}(Y)\cong MU_*/p^r$ in Chow degree zero \cite[Section~1.2, p.~325]{GWX}. The target $\Sigma^{1,-1}Y$ has homology in Chow degree three, proving the first equality. The second was proved in Section~\ref{subsec:complex}. The first equality concerns the induced homology map; it does not assert that $\mathrm{MGL}_{\C}\wedge\beta_{p^r}$ is null-homotopic.
\end{remark}

\section{Homotopy modules and nonfullness}
\label{sec:fullness-classification}

We prove Theorem~\ref{thm:fullness-classification} and Corollary~\ref{cor:homotopy-not-full} using the projective-space families and the following general observation about composites of two ghosts.

Let $\mathcal T$ be a symmetric monoidal triangulated category with unit $\one_{\mathcal T}$ and compatible bigraded shifts $\Sigma^{a,b}$, $(a,b)\in\Z^2$, where $\Sigma^{1,0}=\Sigma$ is the triangulated suspension and $\Sigma^{a,b}X=(\Sigma^{a,b}\one_{\mathcal T})\otimes X$. Assume $\mathcal T=\Thick\{\Sigma^{a,b}\one_{\mathcal T}:a,b\in\Z\}$. Define
\[
 \pi_{a,b}X=[\Sigma^{a,b}\one_{\mathcal T},X].
\]
The tensor product makes $\pi_{*,*}X$ a bigraded $\pi_{*,*}\one_{\mathcal T}$-module. Since the unit shifts generate $\mathcal T$, $\pi_{*,*}$ detects isomorphisms. A map is a ghost if it induces zero on $\pi_{*,*}$.

\begin{lemma}\label{lem:double-ghost-classification}
Let $\alpha:X\to Y$ and $\beta:Y\to Z$ be ghosts in $\mathcal T$ such that $\beta\alpha\ne0$. Set
\[
 A=\cofib(\beta\alpha:X\to Z),\qquad B=Z\oplus\Sigma X.
\]
Then $\pi_{*,*}A\cong\pi_{*,*}B$ as bigraded $\pi_{*,*}\one_{\mathcal T}$-modules. If $[X,Z]$ and $[X,\Sigma X]$ are finite, then $A\not\simeq B$.
\end{lemma}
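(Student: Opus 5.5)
Write $g=\beta\alpha$ and use the cofiber sequence $X\xrightarrow{g}Z\xrightarrow{i}A\xrightarrow{\partial}\Sigma X$. The plan has two parts: an isomorphism of homotopy modules, and a counting argument showing $A\not\simeq B$.

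\emph{Isomorphism of homotopy modules.} Since $g$ is a ghost, the long exact sequence on $\pi_{*,*}$ splits into short exact sequences of $\pi_{*,*}\one_{\mathcal T}$-modules
\[
 0\longrightarrow\pi_{*,*}Z\xrightarrow{i_*}\pi_{*,*}A\xrightarrow{\partial_*}\pi_{*,*}\Sigma X\longrightarrow0 .
\]
It remains to split this sequence $\pi_{*,*}\one_{\mathcal T}$-linearly, and here the factorization $g=\beta\alpha$ is used. The octahedral axiom, applied to $g=\beta\circ\alpha$, gives a map of triangles from $X\xrightarrow{\alpha}Y\to\cofib(\alpha)\to\Sigma X$ to $X\xrightarrow{g}Z\to A\to\Sigma X$. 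This map is $\beta$ on the second terms and some $\phi:\cofib(\alpha)\to A$ on the third, and it is the identity on $\Sigma X$. Because $\alpha$ is a ghost, $\pi_{*,*}\cofib(\alpha)\to\pi_{*,*}\Sigma X$ is surjective. Given $x\in\pi_{a,b}\Sigma X$, choose a lift $y\in\pi_{a,b}\cofib(\alpha)$ and map it to $\phi_*y\in\pi_{a,b}A$. Two lifts differ by the image of some class $w\in\pi_{a,b}Y$. The corresponding classes in $A$ then differ by $i_*\beta_*w=0$, since $\beta$ is a ghost. Hence $s(x)=\phi_*y$ is well defined. It is additive, and it is linear over $\pi_{*,*}\one_{\mathcal T}$, because lifting is compatible with multiplication by unit classes. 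Finally, $\partial_*s=\id$ by commutativity of the triangle map. This yields $\pi_{*,*}A\cong\pi_{*,*}Z\oplus\pi_{*,*}\Sigma X=\pi_{*,*}B$ as modules.

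\emph{Non-equivalence.} I would argue by counting. For any $W$, apply $[-,W]$ to the triangle; the relevant part is the exact sequence $[\Sigma X,W]\xrightarrow{\partial^*}[A,W]\xrightarrow{i^*}[Z,W]\xrightarrow{g^*}[X,W]$. I would take $W=Z$. Then $g^*(\id_Z)=g\ne0$, so $\id_Z\notin\operatorname{im}(i^*)$ for $A$. For $B$, by contrast, $[B,Z]\to[Z,Z]$ is surjective. To turn this into a finite invariant, I would use $W$ built from $Z$ and $\Sigma X$ and compare the orders of the finite groups $[A,W]$ and $[B,W]$. Concretely, I would compute the order of $[A,Z]\oplus[A,\Sigma X]$ using the hypotheses that $[X,Z]$ and $[X,\Sigma X]$ are finite. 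I expect to need finiteness of the adjacent groups as well, which should follow from the same hypotheses together with exactness, possibly after restricting to the relevant subgroups.

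The cleanest invariant may instead be the image of composition. Suppose $\Phi:A\simeq B$. Compose $i:Z\to A$ with $\Phi$ and with the projection $B\to Z$, and compose with $\partial$ similarly. The aim is to show that some endomorphism of $Z\oplus\Sigma X$ would have to realize $g$ as zero, which is a contradiction. The main obstacle is this last step: making the counting or retraction argument rigorous. I would first try the order comparison via the exact sequence
\[
 [\Sigma X,Z]\to[A,Z]\to\ker\bigl(g^*:[Z,Z]\to[X,Z]\bigr)\to0 .
\]
This gives $|[A,Z]|<|[B,Z]|$, because $g^*\ne0$ makes the kernel a proper subgroup. The remaining difficulty is to check that the sequence $[\Sigma X,Z]\to[A,Z]$ has the same kernel in both cases. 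That kernel is the image of $(\Sigma g)^*:[\Sigma Z,Z]\to[\Sigma X,Z]$ for $A$, and zero for $B$. This is where I expect to combine counts over both $W=Z$ and $W=\Sigma X$ and use the finiteness hypotheses.
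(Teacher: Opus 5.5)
Your proof of the module isomorphism is correct and matches the paper's. Extend $(\id_X,\beta)$ to a map of triangles; the morphism axiom suffices, and you do not need the octahedron. Because $\beta_*=0$, the map $c_*\colon\pi_{*,*}\cofib(\alpha)\to\pi_{*,*}A$ factors through the surjection onto $\pi_{*,*}\Sigma X$. This gives a $\pi_{*,*}\one_{\mathcal T}$-linear section of $\partial_*$.

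The non-equivalence part has a genuine gap. You never finish it, and the route you chose cannot be finished from the stated hypotheses. Testing with the contravariant functor $[-,Z]$ requires $[Z,Z]$ and $[\Sigma X,Z]$ to be finite. The lemma assumes nothing about these groups. For example, $[Z,Z]$ is infinite whenever $\id_Z$ has infinite additive order, and finiteness of $[X,Z]$ and $[X,\Sigma X]$ does not rule that out.

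Your worry about the kernel of $\partial^*$ is a red herring. A nonzero kernel only makes $[A,Z]$ smaller. So if those two groups were finite, your sequence would already give
\[
 \lvert[A,Z]\rvert\le\tfrac12\lvert[Z,Z]\rvert\,\lvert[\Sigma X,Z]\rvert<\lvert[B,Z]\rvert .
\]
The real obstruction is finiteness, not the kernel. Your retraction idea also does not produce a contradiction as sketched, because an abstract equivalence $A\simeq B$ need not be compatible with $i$ or $\partial$.

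The missing idea is to test with the covariant functor $[X,-]$, which is exactly what the hypotheses are designed for. Applying it to $X\xrightarrow{g}Z\xrightarrow{i}A\xrightarrow{\partial}\Sigma X$ gives the exact sequence
\[
 [X,X]\xrightarrow{g_*}[X,Z]\xrightarrow{i_*}[X,A]\xrightarrow{\partial_*}[X,\Sigma X]\xrightarrow{(\Sigma g)_*}[X,\Sigma Z].
\]
Since $g_*(\id_X)=g\neq0$, the cokernel of $g_*$ has order at most $\tfrac12\lvert[X,Z]\rvert$. Hence $[X,A]$ is finite and
\[
 \lvert[X,A]\rvert\le\tfrac12\lvert[X,Z]\rvert\,\lvert[X,\Sigma X]\rvert<\lvert[X,Z]\oplus[X,\Sigma X]\rvert=\lvert[X,B]\rvert ,
\]
so $A\not\simeq B$. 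This is the paper's argument, and it uses only the two finiteness hypotheses in the statement.
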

\begin{proof}
We recall the splitting argument of Pirashvili--Redondo~\cite[proof of Theorem~1]{PirashviliRedondo}. Complete $\alpha$ and $\beta\alpha$ to triangles and use the triangle axioms to obtain a commutative diagram
\[
\begin{tikzcd}[column sep=large,row sep=large]
 X \arrow[r,"\alpha"] \arrow[d,equal]
 & Y \arrow[r,"j"] \arrow[d,"\beta"]
 & C \arrow[r,"q"] \arrow[d,"c"]
 & \Sigma X \arrow[d,equal] \\
 X \arrow[r,"\beta\alpha"]
 & Z \arrow[r,"i"]
 & A \arrow[r,"\delta"]
 & \Sigma X.
\end{tikzcd}
\]
Applying $\pi_{*,*}$ gives exact rows in the following diagram:
\begin{equation}\label{eq:double-ghost-extension}
\begin{tikzcd}[column sep=large,row sep=large]
 0 \arrow[r]
 & \pi_{*,*}Y \arrow[r,"j_*"] \arrow[d,"0"']
 & \pi_{*,*}C \arrow[r,"q_*"] \arrow[d,"c_*"]
 & \pi_{*,*}(\Sigma X) \arrow[r] \arrow[d,equal] \arrow[dl,dashed]
 & 0 \\
 0 \arrow[r]
 & \pi_{*,*}Z \arrow[r,"i_*"]
 & \pi_{*,*}A \arrow[r,"\delta_*"]
 & \pi_{*,*}(\Sigma X) \arrow[r]
 & 0.
\end{tikzcd}
\end{equation}
Since the left vertical map is zero, the dashed arrow exists and splits the lower row. Hence $\pi_{*,*}A\cong\pi_{*,*}B$.

Applying $[X,-]$ to the lower triangle gives
\[
 [X,X]\longrightarrow[X,Z]\longrightarrow[X,A] \longrightarrow[X,\Sigma X]\longrightarrow[X,\Sigma Z].
\]
The first map sends $\id_X$ to $\beta\alpha\ne0$. If $[X,Z]$ and $[X,\Sigma X]$ are finite, exactness therefore gives
\[
 \lvert[X,A]\rvert <\lvert[X,Z]\rvert\,\lvert[X,\Sigma X]\rvert =\lvert[X,B]\rvert.
\]
Consequently $A\not\simeq B$.
\end{proof}

We now use Lemma~\ref{lem:double-ghost-classification} to prove Theorem~\ref{thm:fullness-classification} and Corollary~\ref{cor:homotopy-not-full}.

\Needspace{4\baselineskip}
\begin{proof}[Proof of Theorem~\ref{thm:fullness-classification}]
Take $E=BP$, $r=1$, and $n=2$ in Theorem~\ref{thm:projective-composites}. For a suitable $m$, the finite comodule $M=M(m,1)$ supports a ghost
\[
 f=1-\Theta_{1+p}:M\longrightarrow M,\qquad f^2\ne0.
\]
By \eqref{eq:projective-comodule}, its underlying module is
\[
 M=(BP_*/p)\{x,x^2,\ldots,x^m\},\qquad |x^j|=-2j.
\]
Each fixed bidegree of the normalized cobar complex computing $\Ext^{*,*}_{BP_*BP}(BP_*,M)$ is finite-dimensional over $\F_p$: $M$ has finitely many generators and is bounded below internally, while the polynomial generators $v_i,t_i$ have positive degrees tending to infinity. Thus every $\Ext^{s,t}_{BP_*BP}(BP_*,M)$ is finite.

If $\pi_{*,*}X$ is finite in every bidegree, then $[K,X]$ is finite for every finite $K$, by induction on unit shifts, cofiber sequences, and retracts. Applied algebraically, this makes $[M,M]$ and $[M,M[1]]$ finite, so Lemma~\ref{lem:double-ghost-classification}, with both ghosts equal to $f$, proves \textup{(1)}.

For \textup{(2)} and \textup{(3)}, Proposition~\ref{prop:projective-realization} realizes $f$ as a motivic ghost $\alpha:X\to X$ with $\alpha^2\ne0$, where $X$ is finite over $\one_{\C}$ and $p\,\id_X=0$. By Lemma~\ref{lem:descent}, $X$ is $p$-complete, so completing its finite cellular model makes it finite over $\widehat\one_{\C}$ as well. The comparisons~\eqref{eq:comparison-homotopy} and~\eqref{eq:free-forgetful} give
\[
 [\Sigma^{a,b}\widehat\one_{\C},X] \cong [\Sigma^{a,b}\one_{\C},X] \cong \Ext^{2b-a,2b}_{BP_*BP}(BP_*,M).
\]
These groups are finite, and $\alpha$ induces zero on them. The preceding finiteness argument therefore makes $[X,X]$ and $[X,\Sigma^{1,0}X]$ finite in both motivic categories. Lemma~\ref{lem:double-ghost-classification}, with both ghosts equal to $\alpha$, now proves \textup{(2)} and \textup{(3)}.
\end{proof}

\begin{proof}[Proof of Corollary~\ref{cor:homotopy-not-full}]
In each setting, the homotopy functor detects isomorphisms. Fullness would therefore lift the module isomorphism in Theorem~\ref{thm:fullness-classification} to an equivalence of the corresponding objects, a contradiction.
\end{proof}

\section{Homological consequences}
\label{sec:homological-consequences}

We prove Corollary~\ref{cor:homological-consequences} by applying homological dimension bounds to the composites of ghosts constructed above. We first recall the dimensions used in the statement.

\begin{definition}[Homological dimensions]\label{def:homological-dimensions}
Work with graded left modules over a possibly bigraded ring $R$ and degree-preserving maps. The \emph{projective}, \emph{flat}, and \emph{injective dimensions} of $N$, denoted $\operatorname{pd}_R N$, $\operatorname{fd}_R N$, and $\operatorname{id}_R N$, are the least lengths of projective resolutions, flat resolutions, and injective coresolutions, respectively, with value $\infty$ if no finite one exists. The \emph{graded weak global dimension} and \emph{graded global dimension} of $R$ are
\[
 \begin{aligned}
 \operatorname{w.gl.dim}_{\mathrm{gr}}R
   &=\sup_N\operatorname{fd}_R N,\\
 \operatorname{gl.dim}_{\mathrm{gr}}R
   &=\sup_N\operatorname{pd}_R N
    =\sup_N\operatorname{id}_R N,
 \end{aligned}
\]
where $N$ ranges over all graded left $R$-modules.
\end{definition}

For background on homological dimensions, see Weibel~\cite[Sections~4.1--4.3]{Weibel}. We use the following graded form of the ghost-dimension bounds. Write $R$ for the graded endomorphism ring of the unit and $H(Z)$ for the graded $R$-module of maps from shifts of the unit to $Z$.

\begin{proposition}[Christensen; Hovey--Lockridge]\label{prop:ghost-dimension-bound}
Suppose a small, idempotent-complete rigid symmetric monoidal stable $\infty$-category is thickly generated by compatible invertible grading objects, including suspension. If a composite of $n$ ghosts from $X$ to $Y$ is nonzero, then
\[
 \operatorname{fd}_R H(X),\quad \operatorname{pd}_R H(X),\quad \operatorname{id}_R H(Y)\geq n,
\]
where dimensions are taken in graded $R$-modules.
\end{proposition}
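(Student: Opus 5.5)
We argue by induction on $n$, following Christensen's ghost lemma and its Hovey--Lockridge refinement. We build the ghost projective class from the grading objects $\Sigma^{a,b}\one$. Every object $W$ admits a map $P_0\to W$ from a coproduct of shifts of the unit (finite sums suffice for the finitely generated parts; otherwise we pass to the Ind-completion) that is surjective on $H$. Its fiber $W\to\Sigma^{-1}\cdots$ is then the \emph{universal ghost} out of $W$: every ghost $W\to V$ factors through the cofiber map $W\to W_1$, where $W_1=\cofib(P_0\to W)$. Iterating gives a tower
\[
 X=X_0\to X_1\to\cdots\to X_n,
\]
in which each map is universal. Hence any composite of $n$ ghosts out of $X$ factors through $X\to X_n$, and the nonvanishing hypothesis makes $X\to X_n$ nonzero.

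The key identification is the following. Since $H(P)$ is free, hence projective, over $R$ for every coproduct $P$ of unit shifts, the long exact sequences coming from the triangles $P_k\to X_k\to X_{k+1}$ split into short exact sequences
\[
 0\to H(\Sigma^{-1}X_{k+1})\to H(P_k)\to H(X_k)\to 0,
\]
because $X_k\to X_{k+1}$ is a ghost. Splicing these produces a projective (indeed free) resolution of $H(X)$. We then show that $\operatorname{pd}_RH(X)\le n-1$ forces the $(n-1)$st syzygy $H(\Sigma^{-(n-1)}X_{n-1})$ to be projective. A projective $H$-module is a summand of a free module, so it is realized by a retract of a coproduct of unit shifts. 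This uses idempotent completeness, or its Ind version, together with the fact that maps out of sums of unit shifts are determined by $H$. Once the syzygy is realized this way, the universal ghost out of $X_{n-1}$ vanishes, and hence so does $X\to X_n$, which is the contradiction. This gives $\operatorname{pd}\ge n$.

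For flat dimension, we use the Hovey--Lockridge approach. The grading objects are invertible and the category is rigid, so Spanier--Whitehead duality converts $H(X)$ into $\Hom_R$-duals of $H(DX)$, and flatness of finitely presented graded modules coincides with projectivity. The compactness of $X$ in the rigid category makes $H(X)$ coherent enough for this to apply, so $\operatorname{fd}=\operatorname{pd}$ in the range we need. For injective dimension of $H(Y)$, we run the dual tower
\[
 Y_n\to\cdots\to Y_0=Y
\]
built from injective-envelope maps into products of Brown--Comenetz-type duals of unit shifts. The composite of $n$ ghosts then factors through $Y_n\to Y$, and an injective $(n-1)$st cosyzygy would kill it. Alternatively, we can apply duality $D$ to reduce to the projective statement for $DY$, using $\operatorname{id}_R H(Y)=\operatorname{fd}_{R}$ of the character-dual module together with the standard Lambek duality.

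The main obstacle is the realization step: showing that a projective (respectively injective) $R$-module arising as a syzygy is realized by an object, so that the corresponding universal ghost vanishes. For projectives this is the retract argument described above, which needs idempotent completeness. For injectives we expect to need Brown representability in the Ind-category, and we would cite Christensen's Theorem~3.5 and Hovey--Lockridge's comparison at that point rather than reprove them.
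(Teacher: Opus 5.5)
Your projective-dimension argument is Christensen's projective-class argument, which is what the paper cites. Your primary injective argument is its dual, also as in the paper: it uses the tower built from objects $I_E$ with $[W,I_E]\cong\Hom_R(H(W),E)$ for injective $E$, obtained by Brown representability in the Ind-completion. The genuine gap is the flat bound. You reduce it to the projective one by claiming $\operatorname{fd}_RH(X)=\operatorname{pd}_RH(X)$, and neither of your two ingredients holds.

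\emph{Duality.} Spanier--Whitehead duality identifies $H(X)$ with maps from $DX$ to shifts of $\one$. Identifying that with an $R$-dual of $H(DX)$ requires the comparison map from $[DX,\text{shifts of }\one]$ to $\Hom_R(H(DX),\text{shifts of }R)$ to be bijective. Its kernel consists exactly of the ghosts from $DX$ to shifts of $\one$. Bijectivity of such comparisons is one of Freyd's consequences of the generating hypothesis. Nothing in the hypotheses supplies it, and the categories where the proposition is applied are built to contain nonzero ghosts.

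\emph{Finite presentation.} Compactness of $X$ does not make $H(X)$ finitely presented, let alone give finitely presented syzygies. Already $\pi_*(S/p)$ is not finitely generated over $\pi_*S$, and nothing like coherence of $R$ is assumed. So ``flat and finitely presented implies projective'' cannot be applied to the $(n-1)$st syzygy. Since $\operatorname{fd}\le\operatorname{pd}$ always, the flat bound is genuinely stronger than the projective one. It is also the bound needed for the weak global dimension conclusion of Corollary~\ref{cor:homological-consequences}.

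The missing idea is the Hovey--Lockridge phantom argument that the paper uses. If $\operatorname{fd}_RH(X)<n$, every composite of $n$ ghosts out of $X$ is phantom, i.e.\ it vanishes after precomposition with any map from a compact object. Since $X$ is compact, precomposing with $\id_X$ shows the composite is zero. In terms of your tower, the syzygy $H(\Sigma^{-(n-1)}X_{n-1})$ is flat, and one needs that every ghost out of an object $W$ with $H(W)$ flat is phantom.

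Under the stated hypotheses you can see this directly.
\begin{enumerate}
\item For compact $K'$, the K\"unneth map $H(K')\otimes_RH(W)\to H(K'\otimes W)$ is an isomorphism: both sides are homological in $K'$ (the left side by flatness) and they agree on the grading objects.
\item Take $K'=DK$ for compact $K$ and use $[K,W]\cong[\one,DK\otimes W]$. Then every map $K\to W$ factors through a finite sum of shifts of $\one$.
\item Any ghost out of $W$ annihilates such a map.
\end{enumerate}
With $K=X$ and $W=X_{n-1}$, this kills $X\to X_n$. Compactness of $X$ enters here, not through any coherence of $H(X)$.

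Finally, your alternative route to the injective bound via $D$ and Lambek duality has the same defect, since $H(DY)$ is not a character dual of $H(Y)$. Rely on the dual tower instead.
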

\begin{proof}
Pass to the Ind-completion, where the grading objects are compact generators. The projective and injective bounds follow from Christensen's projective-class argument and its dual \cite[Theorem~3.5 and Proposition~4.7]{Christensen}, as formulated in~\cite[Propositions~1.4--1.5]{HL-dimensions}. If $\operatorname{fd}_R H(X)<n$, the bounds of Hovey--Lockridge \cite[Proposition~2.1 and Theorem~2.6]{HL-ghost} imply that every composite of $n$ ghosts out of $X$ is phantom. Since $X$ is compact, such a composite is zero.
\end{proof}

\Needspace{8\baselineskip}
\begin{proof}[Proof of Corollary~\ref{cor:homological-consequences}]
Take $E=BP$ and $r=1$ in Theorem~\ref{thm:projective-composites}, and choose $m$ so that the $n$-th power of the ghost is nonzero. The resulting finite comodule object is annihilated by $p$. Proposition~\ref{prop:ghost-dimension-bound} gives all three algebraic bounds on its detecting module. By Proposition~\ref{prop:projective-realization}, the corresponding motivic ghost has a finite cellular source in the uncompleted category, also annihilated by $p$, and its $n$-th power remains nonzero. By Lemma~\ref{lem:descent}, this source is already $p$-complete. Applying Proposition~\ref{prop:ghost-dimension-bound} to the thick subcategory generated by the bigraded shifts of the completed sphere gives the motivic bounds. Letting $n$ grow gives infinite graded weak global and global dimensions for both rings.
\end{proof}

\section{Further realizations}\label{sec:fields}

We extend the intrinsic constructions of Section~\ref{sec:motivic} to other fields. Over $\R$ and algebraically closed fields of characteristic zero, the examples are finite over the uncompleted motivic sphere. Over an arbitrary field, at primes invertible in the field, the Chow comparison gives ghosts on objects finite over the Chow-degree-zero module unit.

\subsection{Real-motivic realizations}\label{subsec:real}

\begin{proposition}\label{prop:real-intrinsic}
For every prime $p$ and integers $r,n\geq1$, there are a finite cellular real-motivic spectrum $X_{\R}$, annihilated by $p^r$, and a ghost endomorphism $\alpha_{\R}$ whose first $n$ composition powers have exact additive order $p^r$. There is also an explicit example
\[
 X_{\R}=C(\rho,\tau)/(p,\alpha_1),\qquad \alpha_{\R}=iv_1q,
\]
with sixteen cells over the uncompleted sphere, for which $\alpha_{\R}$ has exact additive order $p$ and square zero.
\end{proposition}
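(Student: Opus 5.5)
The plan is to transport the complex-motivic constructions to $\R$ through the real-motivic analogue of the motivic deformation. Over $\R$, the $p$-complete cellular category has the elements $\rho\in\pi_{-1,-1}\widehat\one_{\R}$ and, at least after suitable completion, $\tau$ (at odd primes $\tau^2$, or a power of $\tau$ that is defined). The key input is that the two-step cofiber $C(\rho,\tau)$ behaves like $C\tau$ over $\C$. At odd primes, $\rho$ is invertible up to $p$-adic units issues, or nilpotent in the relevant sense. At $p=2$, killing $\rho$ first gives a spectrum whose $\tau$-cofiber has homotopy computed by the classical Adams--Novikov $E_2$-page via the comparison of $C(\rho,\tau)$-modules with $BP_*BP$-comodules. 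More concretely, I would use that the base change $\R\to\C$ sends $\widehat\one_{\R}/\rho$ to something whose homotopy matches $\widehat\one_{\C}$ in the required range. Hence $C(\rho,\tau)$ is an $E_\infty$ (or at least homotopy-associative) algebra with $\pi_{a,b}C(\rho,\tau)\cong\Ext^{2b-a,2b}_{BP_*BP}(BP_*,BP_*)\otimes\Z_p$, and its module category contains the bounded derived category of even completed comodules, with unit sent to $C(\rho,\tau)$.

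Granting this identification, the proof follows Section~\ref{sec:motivic} verbatim. First, realize $M(m,r)$ and the ghost $f=1-\Theta_{1+p^r}$ of Theorem~\ref{thm:projective-composites} as a $C(\rho,\tau)$-linear map $\alpha_{\R}$ on the corresponding module $X_{\R}$. Second, use free--forgetful adjunction, as in \eqref{eq:free-forgetful}, to see that $\pi_{a,b}X_{\R}\cong\Ext^{2b-a,2b}(BP_*,M(m,r))$ and that $\alpha_{\R}$ is a ghost over the real sphere; Proposition~\ref{prop:principal-unit-ext} supplies the vanishing. Third, obtain nonvanishing of $p^{r-1}\alpha_{\R}^j$ from full faithfulness of the comparison on heart objects. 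Alternatively, use a homology theory (real $\mathrm{MGL}$ or $BP\R$-type homology) detecting the heart equivalence, exactly as $\mathrm{MGL}_{\C}$ was used.

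For finiteness, I would filter $M(m,r)$ by the quotients $\Sigma^{-2j}(BP_*/p^r)$. Each corresponds to $\Sigma^{-2j,-j}C(\rho,\tau)/p^r$, which is built from $\one_{\R}/p^r$ by two cofibers ($\rho$, then $\tau$). Here $\tau$ must be defined on $\one_{\R}/(p^r,\rho)$, using Lemma~\ref{lem:descent} to pass between completed and uncompleted categories. This gives $8$ cells per quotient, so $8m$ cells in total. For the explicit example, take $X_{\R}=C(\rho,\tau)/(p,\alpha_1)$, i.e.\ $\one_{\R}/(p,\widetilde\alpha_1,\rho,\tau)$, which has $2^4=16$ cells. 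Here $\widetilde\alpha_1$ is a real lift ($\eta$ at $2$, or the real $J$-image class). The map $\alpha_{\R}=iv_1q$ then corresponds to $f$ of Theorem~\ref{thm:main}, so it has exact order $p$ and square zero.

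The main obstacle is the first step: making sure that $\tau$ exists on $\one_{\R}/(p^r,\rho)$ at $p=2$ (where $\tau$ itself is not a real class, only $\tau^2$ is, but it should exist after coning off $\rho$ and reducing mod $2^r$), and establishing that $C(\rho,\tau)$ has the comodule-valued homotopy. I would try first to deduce both from the $\C$ statements via the known equivalence relating $\widehat\one_{\R}/\rho$-modules and $C_2$-equivariant or $\C$-motivic data. Failing that, I would compare motivic Adams spectral sequences, where $\rho$-cofiber Ext over $\R$ agrees with $\C$-motivic Ext.
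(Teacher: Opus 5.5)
\textbf{Verdict.} Your outline has the paper's architecture: transport the complex examples, get the ghost property from free--forgetful adjunction, detect the orders with $\mathrm{MGL}$, and count cells through $\rho$- and then $\tau$-cofibers. But there are two gaps.

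\textbf{The deferred step.} The step you leave as ``the main obstacle'' is the load-bearing one. Your fallback names the right tool, but the concrete heuristics you give for it do not hold up. What closes it is the Behrens--Shah equivalence
\[
 \SH(\C)_{\cell,p}^{\wedge}\simeq\Mod_{C\rho}\bigl(\SH(\R)_{\cell,p}^{\wedge}\bigr).
\]
It preserves bigraded shifts, sends $\widehat\one_{\C}$ to $C\rho$, and has underlying functor $\operatorname{Cell}c_*$ for $c:\operatorname{Spec}\C\to\operatorname{Spec}\R$.
\begin{itemize}
\item One defines $C(\rho,\tau)$ simply as the image of $C\tau$.
\item The map $\tau$ on $C\rho/p^r\simeq\one_{\R}/(p^r,\rho)$ (Lemma~\ref{lem:descent}) is the transported complex $\tau$, a $C\rho$-module map. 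So the $\tau$-versus-$\tau^2$ problem on the real sphere never arises, and no prime-by-prime analysis of $\rho$ is needed.
\item The adjunction $c^*\dashv c_*$ identifies the bigraded homotopy of transported objects, and the induced maps, with the complex ones. This gives your Ext formula and the ghost property at once.
\item It is the pushforward that matters, not base change. Since $\rho$ vanishes $p$-adically over $\C$, $c^*$ sends $C\rho$ to the split object $\widehat\one_{\C}\vee\Sigma^{0,-1}\widehat\one_{\C}$, which does not match $\widehat\one_{\C}$.
\item At odd primes you need no lift of $\alpha_1$ to the real sphere. Cone off $p$ and $\rho$, then the transported $\widetilde\alpha_1$, then $\tau$; this gives the sixteen cells.
\end{itemize}

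\textbf{The exact orders.} Your primary argument here does not prove the statement. Full faithfulness of the comodule comparison shows only that $p^{r-1}\alpha_{\R}^j\ne0$ as a $C\rho$-linear map. The statement concerns the underlying endomorphism in $\SH(\R)$, and forgetting a module structure need not be faithful on non-free sources.

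Your fallback via $\mathrm{MGL}$ is the right fix, but it does not work ``exactly as $\mathrm{MGL}_{\C}$ was used'' without a further input. Base change $c^*\mathrm{MGL}_{\R}\simeq\mathrm{MGL}_{\C}$, cellularity, and the projection formula give
\[
 \mathrm{MGL}_{\R}\wedge X_{\R}\simeq\operatorname{Cell}c_*\bigl(\mathrm{MGL}_{\C}\wedge X\bigr).
\]
These ordinary smash products are already $p$-complete because $p^r$ kills them (the argument of Lemma~\ref{lem:descent}). By adjunction, the real $\mathrm{MGL}$-homology of $X_{\R}$ is then the complex $\mathrm{MGL}$-homology of $X$. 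Section~\ref{sec:motivic} shows that the latter detects every nonzero multiple of each composition power.
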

\begin{proof}
Put $C\rho=\cofib(\rho)$, where $\rho:\Sigma^{-1,-1}\widehat\one_{\R}\to\widehat\one_{\R}$ represents $-1\in\R^\times$. At every prime, Behrens--Shah \cite[Corollary~1.9]{BS} give an equivalence
\begin{equation}\label{eq:BS}
 \SH(\C)_{\cell,p}^{\wedge} \simeq\Mod_{C\rho}\bigl(\SH(\R)_{\cell,p}^{\wedge}\bigr)
\end{equation}
preserving bigraded shifts and sending the complex sphere to $C\rho$. Its underlying functor is $\operatorname{Cell}c_*$, where $c:\operatorname{Spec}\C\to\operatorname{Spec}\R$ and $\operatorname{Cell}$ denotes cellularization. Write $C(\rho,\tau)$ for the image of $C\tau$. Transport the complex examples $(X,\alpha)$ of Proposition~\ref{prop:projective-realization} and Theorem~\ref{thm:intrinsic} to $(X_{\R},\alpha_{\R})$. Free--forgetful adjunction identifies their bigraded homotopy groups and induced maps, so $\alpha_{\R}$ is a ghost.

To detect the additive orders, use algebraic cobordism $\mathrm{MGL}$. The spectra are annihilated by $p^r$, so their ordinary smash products with $\mathrm{MGL}$ are $p$-complete by Lemma~\ref{lem:descent}. Cellularity and base change for $\mathrm{MGL}$ and the projection formula \cite[Lemmas~3.7 and~4.5]{BS} give
\[
 \mathrm{MGL}_{\R}\wedge X_{\R} \simeq\operatorname{Cell}c_* \bigl(\mathrm{MGL}_{\C}\wedge X\bigr).
\]
Adjunction and the $\mathrm{MGL}$-homology calculation of Section~\ref{sec:motivic} show that every nonzero multiple of each composition power remains nonzero. Thus forgetting preserves exact additive orders; transport also preserves $\alpha^2=0$ in the explicit example.

For finiteness over the uncompleted sphere, Lemma~\ref{lem:descent} identifies the four-cell spectrum $\one_{\R}/(p^r,\rho)$ with $C\rho/p^r$. Its $\tau$-cofiber models $C(\rho,\tau)/p^r$ with eight cells. The projective-space filtration has $m$ shifts of this spectrum as quotients, hence gives a model with $8m$ cells. Choose $m$ as in Theorem~\ref{thm:projective-composites}. For the explicit example, start with $\one_{\R}/(p,\rho)$ and take the cofibers of the transported lift $\widetilde\alpha_1$ from Section~\ref{subsec:complex} and then $\tau$. This gives sixteen cells in bidegrees
\begin{equation}\label{eq:real-cells}
 (0,0)+\epsilon_1(1,0)+\epsilon_2(0,-1) +\epsilon_3(2p-2,p-1)+\epsilon_4(1,-1), \qquad\epsilon_i\in\{0,1\}.
\end{equation}
Transporting $iv_1q$ gives the displayed map $\alpha_{\R}$.
\end{proof}

For $c\in\F_p^\times$, the map $1+c\alpha_{\R}$ in the sixteen-cell example is a nonidentity self-equivalence of order $p$ acting identically on all bigraded real-motivic homotopy groups.

\subsection{Algebraically closed fields and Chow modules}\label{subsec:fields}
The construction of Section~\ref{sec:motivic} also works over every algebraically closed field of characteristic zero. The Gheorghe--Wang--Xu comparison~\cite[Remark~1.4]{GWX} and the completion comparison~\cite[Theorem~1]{HKO} both hold over these fields. This construction gives an eight-cell spectrum over the uncompleted sphere with a nonzero ghost of exact additive order $p$ and square zero. The same $\mathrm{MGL}$-homology calculation detects its nonvanishing.

There is also a consequence over an arbitrary field after passing to modules over the Chow-degree-zero sphere. Write $\one_{k,c=0}$ for the degree-zero truncation of the motivic sphere in the Chow $t$-structure of~\cite{BKWX}. For a prime $p$ invertible in $k$, consider the $p$-local category
\[
 \Mod^{\cell}_{(\one_{k,c=0})_{(p)}} \bigl(\SH(k)_{(p)}\bigr).
\]
Here the cellular subcategory is generated under colimits by the bigraded shifts of the module unit $(\one_{k,c=0})_{(p)}$; a finite object belongs to their thick subcategory.

Under the algebraic comparison below, this unit corresponds to $BP_*$. Its $p$-complete cellularization over $\C$ is $C\tau$; over $\R$ at $p=2$, it is $C(\rho,\tau)$ \cite[proof of Corollary~1.2 and Section~5.5]{BKWX}. These models have two and four cells, respectively, over the completed motivic sphere. For a general field, the comparison does not supply a finite-cell model of the Chow unit over either the completed or the uncompleted sphere. Finiteness below is relative to the module unit.

\begin{corollary}\label{cor:chow-fields}
Let $k$ be a field, and let $p$ be a prime invertible in $k$. In the cellular module category above, there are a finite object $X$ and a nonzero endomorphism $\alpha:X\to X$ such that
\[
 [\Sigma^{a,b}(\one_{k,c=0})_{(p)},\alpha]_{(\one_{k,c=0})_{(p)}}=0 \qquad(a,b\in\Z).
\]
The object $X$ is constructed from four cells over the module unit, and $\alpha$ has exact additive order $p$ and square zero.
\end{corollary}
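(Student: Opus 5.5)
We transport the algebraic example of Theorem~\ref{thm:main} along the Chow comparison of Bachmann--Kong--Wang--Xu. By \cite[Theorem~1.9(2) and Section~5.1]{BKWX}, there is an equivalence
\[
 \Mod^{\cell}_{(\one_{k,c=0})_{(p)}}\bigl(\SH(k)_{(p)}\bigr)\simeq\operatorname{Stable}(BP_*BP)^{\mathrm{even}},
\]
which sends the module unit to $BP_*$. We shall check that it carries bigraded shifts to the shifts appearing in \eqref{eq:comparison-shifts}, namely $\Sigma^{a,b}\leftrightarrow(\Sigma^{2b}-)[a-2b]$, possibly up to a sign convention on the cohomological shift. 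This is the step that needs care. The Chow heart is indexed by Chow degree $a-2b$, and the grading objects $\Sigma^{2b,b}$ of Chow degree zero correspond to the internal shifts $\Sigma^{2b}$. We first confirm that this is the convention of \cite{BKWX}, and only then match $\Sigma^{1,0}$ with the triangulated shift.

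Granting this, take $X$ to be the object corresponding to the comodule $M=(BP_*/p)\{e_0,e_1\}$ of \eqref{eq:M}, and take $\alpha$ to be the image of $f=1-\Theta_{1+p}$. \emph{Finiteness.} The extension \eqref{eq:M-extension}, together with the cofiber sequences for $p$, builds $M$ from four shifts of $BP_*$. The equivalence is exact and matches these shifts with bigraded shifts of the module unit, so $X$ is built from four cells over $(\one_{k,c=0})_{(p)}$.

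\emph{Ghost property.} For every $(a,b)$, the equivalence identifies the module mapping group $[\Sigma^{a,b}(\one_{k,c=0})_{(p)},X]$ with $\Hom(\Sigma^{2b}BP_*,M[a'])$ for the corresponding cohomological degree $a'$. That group is $\Ext^{a',2b}_{BP_*BP}(BP_*,M)$, which is zero when $a'<0$. By Proposition~\ref{prop:principal-unit-ext} and \eqref{eq:ghost-calculation}, $f$ induces zero on all of these groups. One point to check: \cite{BKWX} may use the full stable category rather than the derived category. The bounded comparison of \cite[Proposition~3.6 and Lemma~3.16]{BH}, recalled in Section~\ref{subsec:categories}, identifies the two on bounded objects, so the Ext description applies.

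\emph{Nonvanishing, order and square.} An equivalence preserves these properties. Theorem~\ref{thm:main} gives that $f\neq0$, that $pf=0$ with $f$ of exact additive order $p$, and that $f^2=0$; the same therefore holds for $\alpha$.

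The main obstacle is the precise form of the BKWX equivalence. We need that it is an exact equivalence of the cellular module category with the even graded part of $\operatorname{Stable}(BP_*BP)$, defined over an arbitrary field (not merely one where $p$-completion arguments are available), and that it respects the bigraded shifts as above. We rely on \cite[Theorem~1.9(2)]{BKWX} for all of this, using only that $p$ is invertible in $k$. We also note that finiteness is claimed only relative to the module unit, so no cell model over the sphere is needed.
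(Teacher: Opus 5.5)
Your proposal is correct and is essentially the paper's argument: transport $M$ and $f=1-\Theta_{1+p}$ from Theorem~\ref{thm:main} along the Bachmann--Kong--Wang--Xu equivalence, which preserves the unit, shifts, finite cofibers and mapping groups, so the four-cell construction, the ghost property, nonvanishing, exact order $p$ and $f^2=0$ all carry over. The point you left to the citation (the precise form of the equivalence) is handled in the paper by starting from the $MU_*MU$-comodule form of \cite[Theorem~1.9(2) and Corollary~4.29]{BKWX}, which requires inverting the exponential characteristic of $k$, automatic after $p$-localization because $p$ is invertible in $k$, and then localizing at $p$ and passing to $BP_*BP$-comodules.
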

\begin{proof}
Bachmann--Kong--Wang--Xu~\cite[Theorem~1.9(2) and Corollary~4.29]{BKWX} identify the cellular module category over $\one_{k,c=0}$, after inverting the exponential characteristic of $k$, with the stable category of even graded $MU_*MU$-comodules. Since $p$ is invertible in $k$, $p$-localization already inverts the exponential characteristic. Localize this symmetric monoidal equivalence at $p$ and use the equivalence between $p$-local $MU_*MU$-comodules and $BP_*BP$-comodules, recalled in~\cite[Section~5.1, before Corollary~5.5]{BKWX}. It preserves the unit, shifts, finite cofibers, and mapping groups, so it carries $M$ and $f$ from Theorem~\ref{thm:main} to $X$ and $\alpha$. The construction of $M$ from four shifts of $BP_*$ gives four cells over the module unit, and the order and composition assertions are preserved.
\end{proof}

\bibliographystyle{plain}
\bibliography{ref}
\end{document}